\newtheorem{DE}{Definition}[section]
\newcommand {\sm} {\setminus}
\newcommand{\qed}{\relax\ifmmode\hskip2em\Box\else\unskip\nobreak\hfill$\Box$\fi}
\newtheorem{theorem}[DE]{Theorem}
\newtheorem{lemma}[DE]{Lemma}
\theoremstyle{break}\theorembodyfont{\rmfamily}}
\theoremstyle{break}\theorembodyfont{\rmfamily}}
\newcounter{claim}
\newenvironment{proof}[1][]%
	{\noindent {\setcounter{claim}{0}\sc proof --- }{#1}{}}{\qed\vspace{2ex}}
	{\refstepcounter{claim}\vspace{1ex}\noindent {(\it\arabic{claim}) {#1}{}}\it}{\vspace{1ex}}
	{\noindent {}{#1}{}}{ This proves~(\arabic{claim}).\vspace{1ex}}
\begin{document}

\title{The (theta, wheel)-free graphs\\ Part I: only-prism and
  only-pyramid graphs}

\author{Emilie Diot\thanks{ENS de Lyon, LIP. E-mail:
    emilie.diot.pro@gmail.com}~, Marko Radovanovi\'c\thanks{University
    of Belgrade, Faculty of Mathematics, Belgrade, Serbia. Partially
    supported by Serbian Ministry of Education, Science and
    Technological Development project 174033. E-mail:
    markor@matf.bg.ac.rs}~, Nicolas Trotignon\thanks{CNRS, LIP, ENS de
    Lyon. Partially supported by ANR project Stint under reference
    ANR-13-BS02-0007 and by the LABEX MILYON (ANR-10-LABX-0070) of
    Universit\'e de Lyon, within the program ‘‘Investissements
    d'Avenir’’ (ANR-11-IDEX-0007) operated by the French National
    Research Agency (ANR).  Also Universit\'e Lyon~1, universit\'e de
    Lyon. E-mail: nicolas.trotignon@ens-lyon.fr}~, Kristina Vu\v{s}kovi\'c\thanks{School of Computing, University of Leeds, and
    Faculty of Computer Science (RAF), Union University, Belgrade,
    Serbia.  Partially supported by EPSRC grant EP/K016423/1, and
    Serbian Ministry of Education and Science projects 174033 and
    III44006. E-mail: k.vuskovic@leeds.ac.uk}}

\maketitle

\begin{abstract}
  Truemper configurations are four types of graphs (namely thetas,
  wheels, prisms and pyramids) that play an important role in the
  proof of several decomposition theorems for hereditary graph
  classes.  In this paper, we prove two structure theorems: one for
  graphs with no thetas, wheels and prisms as induced subgraphs, and
  one for graphs with no thetas, wheels and pyramids as induced
  subgraphs.  A consequence is a polynomial time recognition
  algorithms for these two classes.
  In Part II of this series we generalize these results to graphs with no thetas and wheels as induced subgraphs,
  and in Parts III and IV, using the obtained structure, we solve several optimization problems for these graphs.

  AMS classification: 05C75
\end{abstract}

\section{Introduction}\label{sec:intro}

In this article, all graphs are finite and simple.

A \emph{prism} is a graph made of three node-disjoint chordless paths
$P_1 = a_1 \dots b_1$, $P_2 = a_2 \dots b_2$, $P_3 = a_3 \dots b_3$ of
length at least 1, such that $a_1a_2a_3$ and $b_1b_2b_3$ are triangles
and no edges exist between the paths except those of the two
triangles.  Such a prism is also referred to as a
$3PC(a_1a_2a_3,b_1b_2b_3)$ or a $3PC(\Delta ,\Delta )$ (3PC stands for
\emph{3-path-configuration}).

A \emph{pyramid} is a graph made of three chordless paths
$P_1 = a \dots b_1$, $P_2 = a \dots b_2$, $P_3 = a \dots b_3$ of
length at least~1, two of which have length at least 2, node-disjoint
except at $a$, and such that $b_1b_2b_3$ is a triangle and no edges
exist between the paths except those of the triangle and the three
edges incident to $a$.  Such a pyramid is also referred to as a
$3PC(b_1b_2b_3,a)$ or a $3PC(\Delta ,\cdot)$.

A \emph{theta} is a graph made of three internally node-disjoint
chordless paths $P_1 = a \dots b$, $P_2 = a \dots b$,
$P_3 = a \dots b$ of length at least~2 and such that no edges exist
between the paths except the three edges incident to $a$ and the three
edges incident to $b$.  Such a theta is also referred to as a
$3PC(a, b)$ or a $3PC(\cdot ,\cdot)$.

\begin{figure}
  \begin{center}
    \includegraphics[height=2cm]{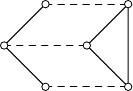}
    \hspace{.2em}
    \includegraphics[height=2cm]{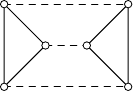}
    \hspace{.2em}
    \includegraphics[height=2cm]{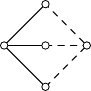}
    \hspace{.2em}
    \includegraphics[height=2cm]{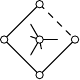}
  \end{center}
  \caption{Pyramid, prism, theta and wheel (dashed lines represent
    paths)\label{f:tc}}
\end{figure}

A \emph{hole} in a graph is a chordless cycle of length at least~4.
Observe that the lengths of the paths in the three definitions above
are designed so that the union of any two of the paths induce a hole.  A
\emph{wheel} $W= (H, c)$ is a graph formed by a hole $H$ (called the
\emph{rim}) together with a node $c$ (called the \emph{center}) that
has at least three neighbors in the hole.

A \emph{3-path-configuration} is a a graph isomorphic to a prism, a
pyramid or a theta.  A~\emph{Truemper configuration} is a graph
isomorphic to a prism, a pyramid, a theta or a wheel.  They appear in
a theorem of Truemper~\cite{truemper} that characterises graphs whose
edges can be labeled so that all chordless cycles have prescribed
parities (3-path-configurations seem to have first appeared in a
paper Watkins and Mesner~\cite{watkinsMesner:cycle}).

If $G$ and $H$ are graphs, we say that $G$ \emph{contains} $H$ when
$H$ is isomorphic to an induced subgraph of $G$.  We say that $G$ is
\emph{$H$-free} if it does not contain $H$.  We extend this to classes
of graphs with the obvious meaning (for instance, a graph is (theta,
wheel)-free if it does not contain a theta and does not contain a
wheel).

\medskip

Truemper configurations play an important role in the
analysis of several important hereditary graph classes,
as explained in a survey of Vu\v
skovi\'c~\cite{vuskovic:truemper}.  Let us simply mention here that
many decomposition theorems for classes of graphs are proved by
studying how some Truemper configuration contained in the graph
attaches to the rest of the graph, and often, the study relies on the
fact that some other Truemper configurations are excluded from the
class.  The most famous example is perhaps the class of \emph{perfect
  graphs}.  In these graphs, pyramids are excluded, and how a prism
contained in a perfect graphs attaches to the rest of the graph is
important in the decomposition theorem for perfect graphs, whose
corollary is the celebrated \emph{Strong Perfect Graph Theorem} due to
Chudnovksy, Robertson, Seymour and
Thomas~\cite{chudnovsky.r.s.t:spgt}.  See also~\cite{nicolas:perfect}
for a survey on perfect graphs, where a section is specifically
devoted to Truemper configurations.  But many other examples exist,
such as the seminal class of chordal graphs~\cite{dirac:chordal}
(containing no holes and therefore no Truemper configurations), universally signable
graphs~\cite{confortiCKV97} (which is exactly the class of graphs containing no Truemper
configurations), even-hole-free
graphs~\cite{conforti.c.k.v:eh1,kmv:evenhole} (containing pyramids but
not containing thetas and prisms), cap-free
graphs~\cite{conforti.c.k.v:capfree} (not containing prisms and
pyramids, but containing thetas), ISK4-free graphs~\cite{nicolas:isk4}
(containing prisms and thetas but not containing pyramids), chordless
graphs~\cite{mft:chordless} (containing no prisms, pyramids and
wheels, but containing thetas), (theta, triangle)-free
graphs~\cite{radovanovicV:theta} (containing no prisms, pyramids and
thetas), claw-free graphs~\cite{DBLP:conf/bcc/ChudnovskyS05}
(containing prisms, but not containing pyramids and thetas) and
bull-free graphs~\cite{chudnovsky12} (containing thetas and the prism
on six nodes, but not containing pyramids and prisms on at least 7
nodes).  In most of these classes, some wheels are allowed and some
are not.  In some of them (notably perfect graphs and even-hole-free
graphs), the structure of a graph containing a wheel is an important
step in the study of the class.  Let us mention that the classical
algorithm LexBFS produces an interesting ordering of the nodes in many
classes of graphs where some well-chosen Truemper configurations are
excluded~\cite{abChTrVu:moplex}.  Let us also mention that many
subclasses of wheel-free graphs are well studied, namely unichord-free
graphs~\cite{nicolas.kristina:one}, graphs that do not contain $K_4$
or a subdivision of a wheel as an induced
subgraph~\cite{nicolas:isk4}, graphs that do not contain $K_4$ or a
wheel as a subgraph~\cite{thomassenToft:k4,aboulkerHT:wheelFree},
propeller-free graphs~\cite{aboulkerRTV:propeller}, graphs with no
wheel or antiwheel~\cite{maffray:15} and planar wheel-free
graphs~\cite{aboulker.c.s.T:wfpg}.

\medskip

All these examples suggest that a systematic study of classes of graphs defined
by excluding Truemper configurations  is of interest. It
might shed a new light on all the classes mentioned above and be interesting in
its own right.  In this paper we study two of such classes.
Since there are four types of Truemper configurations, there
are potentially $2^4= 16$ classes of graphs defined by excluding them
(such as prism-free graphs, (theta, wheel)-free graphs, and so on).
In one of them, none of the Truemper configurations are excluded, so
it is the class of all graphs.  We are left with 15 non-trivial
classes where at least one type of Truemper configuration is excluded.
One case is when all Truemper configurations are excluded.  This class
is known as the class of \emph{universally signable
  graphs}~\cite{confortiCKV97} and it is well studied: its structure
is fully described, and many difficult problems such as graph
coloring, and the maximum clique and stable set problems can be solved
in polynomial time for this class (see~\cite{abChTrVu:moplex} for the
most recent algorithms for them).  So we are left with 14 classes of
graphs, and to the best of our knowledge, they were not studied so
far, except for one aspect: the complexity of the recognition problem
is known for 11 of them.  Let us survey this.

\newcommand{\zero}{excluded} \newcommand{\un}{---}

It is convenient to sum up in a table all the 16 classes.  In
Table~\ref{t:t}, each line of the table represents a class of graphs
defined by excluding some Truemper configurations.  The first four
columns indicate which Truemper configurations are excluded and which
are allowed.  The last columns indicates the complexity of the
recognition algorithm and a reference to the paper where this
complexity is proved.  Lines with a reference to a theorem indicate a
result proved here. For instance line~5 of the table should be read as
follows: the complexity of deciding whether a graph is in the class of
(theta, prism)-free graphs is $O(n^{35})$ (throughout the paper, $n$
stands for the number of nodes, and $m$ for the number of edges of the
input graph).  Observe that a recognition algorithm for (theta,
prism)-free graphs is equivalent to an algorithm to decide whether a
graph contains a theta \emph{or} a prism.  Note that all the proofs of
NP-completeness rely on a variant of a classical construction of
Bienstock~\cite{bienstock:evenpair}.

\begin{table}
  \begin{tabular}{ccccccc}
    k & theta & pyramid & prism & wheel & Complexity & Reference\\ \hline
    0   &  \zero  & \zero & \zero & \zero & $O(nm)$&\cite{confortiCKV97}\cite{tarjan:clique}\\
    1   &  \zero  & \zero & \zero & \un & $O(n^7)$& \cite{maffray.t:reco}\cite{maffray.t.v:3pcsquare} \\
    2   &  \zero  & \zero & \un & \zero & $O(n^3m)$ & Theorem~\ref{th:recoOprism}\\
    3   &  \zero  & \zero & \un & \un & $O(n^7)$&\cite{maffray.t.v:3pcsquare}\\
    4   &  \zero  & \un & \zero & \zero & $O(n^4m)$& Theorem~\ref{th:recoOpy}\\
    5   &  \zero  & \un & \zero & \un & $O(n^{35})$&\cite{Chudnovsky.Ka:08} \\
    6   &  \zero  & \un & \un & \zero & $O(n^4m)$&Part II \cite{rtv} \\
    7   &  \zero  & \un & \un & \un & $O(n^{11})$&\cite{chudnovsky.seymour:theta}\\
    8   &  \un  & \zero & \zero & \zero & NPC&\cite{diotTaTr:13}\\
    9   &  \un  & \zero & \zero & \un & $O(n^5)$&\cite{maffray.t:reco} \\
    10   &  \un  & \zero & \un & \zero & NPC& \cite{diotTaTr:13}\\
    11   &  \un  & \zero & \un & \un & $O(n^9)$&\cite{chudnovsky.c.l.s.v:reco} \\
    12   &  \un  & \un & \zero & \zero & NPC&\cite{diotTaTr:13}\\
    13   &  \un  & \un & \zero & \un & NPC&\cite{maffray.t:reco}\\
    14   &  \un  & \un & \un & \zero & NPC&\cite{diotTaTr:13}\\
    15   &  \un  & \un & \un & \un & $O(1)$&Trivial\\
  \end{tabular}
  \caption{Detecting Truemper configurations\label{t:t}}
\end{table}

As already stated, 13 of the recognition problems of Table~\ref{t:t}
are solved in previous work.
In this paper and its subsequent part \cite{rtv} we resolve the complexity
of recognition of the remaining three classes.
In this paper we give a polynomial time recognition
algorithm for the following two classes: (theta, wheel, pyramid)-free
and (theta, wheel, prism)-free graphs.  In the first class, the only
allowed Truemper configurations are prisms, and in the second, the only
ones are pyramids.  We therefore use the names \emph{only-prism} and
\emph{only-pyramid} for these two classes.  The last problem from
Table~\ref{t:t}, namely the recognition of (theta, wheel)-free
graphs, a similar approach is successful while being more
complicated. This class is studied in a subsequent paper by the last
three authors \cite{rtv}.

For each class, our recognition algorithm relies on a decomposition
theorem for the class.  In each case, this theorem fully describes the
structure of the most general graph in the class, and could therefore
be used to provide algorithms for several combinatorial optimisation
problems.  This is done in Parts III and IV of this series (see \cite{rtv3} and \cite{rtv4}), where  polynomial-time algorithms for finding maximum weighted clique and stable set, for optimal coloring and for induced version of $k$-linkage problem (for $k$ fixed) are obtained for the class of (theta,wheel)-free graphs.  We note that among the 16 classes described in
Table~\ref{t:t}, only universally signable graphs (line~0 from the
table) have a (previously known) decomposition theorem.  All the other (previously known) polynomial time
algorithms mentioned in Table~\ref{t:t} are based on a direct
algorithm to detect the obstruction.

In Section~\ref{sec:mr}, we give some notation and we describe the
results, in particular we state precisely the decomposition theorems
proved in the rest of the paper. In Section~\ref{sec:pl}, we prove
several lemmas needed in many places. In Section~\ref{sec:onlyprism},
we prove the decomposition theorem for only-prism
graphs. In Section~\ref{sec:onlypyramid}, we prove the decomposition
theorem for only-pyramid  graphs (note that the proof
relies mostly on theorems proved previously in~\cite{kmv:evenhole}).
In Section~\ref{sec:2joins}, we prove that the 2-joins (a
decomposition defined in the next section) that actually occur in our
classes of graph have a special structure. In Section~\ref{sec:algo},
we describe the recognition algorithms and show how the decomposition theorems
that we prove can be transformed into structure theorems.

\section{Main results}
\label{sec:mr}

A {\em path} $P$ is a sequence of distinct nodes $p_1p_2\ldots p_k$,
$k\geq 1$, such that $p_ip_{i+1}$ is an edge for all $1\leq i <k$.
Edges $p_ip_{i+1}$, for $1\leq i <k$, are called the {\em edges of
  $P$}.  Nodes $p_1$ and $p_k$ are the {\em ends} of $P$.  A cycle $C$
is a sequence of nodes $p_1p_2\ldots p_kp_1$, $k \geq 3$, such that
$p_1\ldots p_k$ is a path and $p_1p_k$ is an edge.  Edges
$p_ip_{i+1}$, for $1\leq i <k$, and edge $p_1p_k$ are called the {\em
  edges of $C$}.  Let $Q$ be a path or a cycle.  The node set of $Q$
is denoted by $V(Q)$.  The {\em length} of $Q$ is the number of its
edges.  An edge $e=uv$ is a {\em chord} of $Q$ if $u,v\in V(Q)$, but
$uv$ is not an edge of $Q$. A path or a cycle $Q$ in a graph $G$ is
{\em chordless} if no edge of $G$ is a chord of $Q$.  For a path $P$
and $u,v\in V(P)$, we denote with $uPv$ the chordless path in $P$ from
$u$ to $v$.

A subset $S$ of nodes of a graph $G$ is a {\em cutset} if $G\sm S$ is
disconnected.  A {\em clique} in a graph is a (possibly empty) set of pairwise adjacent vertices.
A clique on $k$ nodes is denoted by $K_k$.  A $K_3$ is
also referred to as a {\em triangle}, and is denoted by $\Delta$.  A
node cutset $S$ is a {\em clique cutset} if $S$ is a clique.
Note that in particular  the empty set is a clique and that a
disconnected graph has a clique cutset (the empty set).

Our main results are generalizations of the next two theorems. A graph
is \emph{chordal} if it is hole-free.

\begin{theorem}[Dirac \cite{dirac:chordal}]
  \label{th:chordal}
  A chordal graph is either a clique or has a clique cutset.
\end{theorem}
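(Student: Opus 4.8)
The statement is the classical characterization of chordal graphs, so the plan is to give the standard proof via a \emph{simplicial} vertex argument, or alternatively a direct minimal-cutset argument. I would proceed by induction on the number of nodes $n$. If $G$ is a clique there is nothing to prove, so assume $G$ is chordal and not a clique; I must produce a clique cutset. If $G$ is disconnected, the empty set is a clique cutset (as noted in the paragraph preceding the theorem), so assume $G$ is connected. Since $G$ is not a clique, there exist two nonadjacent nodes, so $G$ has a cutset; pick a \emph{minimal} (inclusion-wise) cutset $S$, and let $C_1, C_2$ be the node sets of two distinct connected components of $G \sm S$. The key claim is that $S$ is a clique.

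To prove the claim, suppose $u,v \in S$ are nonadjacent. By minimality of $S$, every node of $S$ has a neighbor in $C_1$ and a neighbor in $C_2$ (otherwise $S \sm \{w\}$ would still separate $C_1$ from $C_2$ for the offending $w$). Hence there is a chordless path from $u$ to $v$ with all internal nodes in $C_1$, and likewise a chordless path from $u$ to $v$ with all internal nodes in $C_2$; choose both to be as short as possible. Their union is a cycle of length at least $4$, and since there are no edges between $C_1$ and $C_2$ and each of the two paths is chordless and shortest, this cycle is in fact a hole, contradicting that $G$ is chordal. Therefore $S$ is a clique, and it is a cutset, so $G$ has a clique cutset.

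The main (and only mild) obstacle is verifying that the two shortest $u$--$v$ paths through $C_1$ and $C_2$ really compose into a chordless cycle: one must check there is no chord with one end in $C_1$ and the other in $C_2$ (impossible since $S$ separates them), no chord within the $C_1$-part or the $C_2$-part (ruled out by choosing each path chordless, indeed shortest), and no chord incident to $u$ or $v$ other than along the paths (again handled by shortestness together with the fact that $u,v$ have no common neighbor forced into both sides). Once the hole is exhibited, the contradiction with chordality is immediate. An alternative route is to prove the equivalent statement that every chordal graph that is not a clique has two nonadjacent simplicial vertices and peel one off, but the minimal-cutset argument above is the most direct for the exact phrasing given here.
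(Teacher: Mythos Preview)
The paper does not give a proof of this theorem at all; it is stated as a classical result of Dirac and only cited. Your minimal-separator argument is correct and is in fact the standard proof: the only point that needs a moment's care is the chord analysis for the cycle formed by the two shortest $u$--$v$ paths, and your case split (no $C_1$--$C_2$ edges since $S$ separates, no internal chords by shortestness/chordlessness, and $uv\notin E$) covers it.
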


\begin{theorem}[Conforti, Cornu\'ejols, Kapoor, Vu\v skovi\'c \cite{confortiCKV97}]\label{universally}
  A (theta, wheel, pyramid, prism)-free graph is either a clique or a
  hole, or has a clique cutset.
\end{theorem}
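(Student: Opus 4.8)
The plan is to show that a (theta, wheel, pyramid, prism)-free graph $G$ which is neither a clique nor a hole must have a clique cutset. Since the class contains no Truemper configuration at all, we may freely use Theorem~\ref{th:chordal} for the sub-case when $G$ happens to be chordal: if $G$ is chordal it is either a clique or has a clique cutset, and we are done. So the interesting situation is that $G$ contains a hole $H$; we fix a shortest such hole, and argue by looking at how the rest of $G$ attaches to $H$.

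The key step is to understand the possible \emph{attachments} of a connected set $C$ of $V(G)\sm V(H)$ to $H$. For a node $c$ outside $H$, let $N_H(c)$ be its neighbourhood in $H$. First I would classify a single node $c$ by $|N_H(c)|$: if $c$ has at least three neighbours in $H$, then $(H,c)$ is a wheel, contradiction; so $|N_H(c)|\le 2$. If $N_H(c)=\{u,v\}$ with $u,v$ non-adjacent, then the two sub-paths of $H$ between $u$ and $v$, together with $c$, form a theta (using that $H$ is a hole, hence each sub-path has length at least $2$) — contradiction. Hence every node outside $H$ has either $0$, $1$, or exactly $2$ neighbours in $H$, and in the last case the two neighbours are adjacent (an edge of $H$). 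Next I would treat a connected set $C$ with attachment to $H$ using standard Truemper-configuration arguments: take a connected $C$ with $N(C)\cap V(H)$ contained in no single edge of $H$, minimal such; then $G[C]$ contains an induced path $P$ whose interior misses $H$ and whose two ends $p,q$ have the attachment spread out on $H$; chasing the cases on $N_H(p),N_H(q)$ and using minimality of $H$, one builds a theta, pyramid, prism or wheel on $H\cup P$ in every case — this is exactly the type of case analysis that appears throughout the Truemper-configuration literature (and is essentially Lemma~2.1 of \cite{confortiCKV97}).

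From the attachment analysis one concludes: for every connected component $C$ of $G\sm V(H)$, the set $N(C)\cap V(H)$ lies inside a single edge $\{u,v\}$ of $H$, i.e. $N(C)\cap V(H)\subseteq\{u,v\}$. Moreover, for distinct components $C_1,C_2$ with $N(C_i)\cap V(H)$ both contained in the same edge $uv$, one should check that $C_1\cup C_2$ is still ``confined'' to $uv$ (otherwise merge them and re-run the previous paragraph). Now pick any edge $uv$ of $H$ that receives an attachment from some component $C$ (if no edge receives any attachment, then either $G=H$ — impossible since $G$ is not a hole — or $G\sm V(H)$ has a component with empty neighbourhood in $H$, but then the whole set $N(C)$ avoids $H$; in that situation $H$ lies in one component of $G$ and $C$ in another, so already $G$ is disconnected and $\emptyset$ is a clique cutset). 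Consider $S=\{u,v\}$, which is a clique since $uv\in E(H)$. I claim $S$ is a cutset separating $C$ from $V(H)\sm\{u,v\}$: any path from $C$ to $V(H)\sm\{u,v\}$ avoiding $\{u,v\}$ would have to leave the component of $G\sm\{u,v\}$ containing $C$, but every node of $H\sm\{u,v\}$ is at distance bounded away from $C$ by the attachment confinement. Since $H$ is a hole of length at least $4$, $V(H)\sm\{u,v\}$ is non-empty and not contained in $C$'s side, so $G\sm S$ is disconnected; thus $S$ is a clique cutset.

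The main obstacle is the attachment lemma itself — proving that every connected component of $G\sm V(H)$ has its $H$-neighbourhood confined to one edge of a shortest hole. The single-node case is immediate from the definitions of wheel and theta, but the general connected case requires carefully producing, from a minimal ``rebel'' component, an induced path $P$ with controlled endpoints and then exhibiting one of the four forbidden configurations; the delicate points are checking the length conditions (paths of length $\ge 2$, the ``two of which have length at least $2$'' clause of the pyramid) and handling the boundary cases where $p$ and $q$ have overlapping or adjacent neighbourhoods on $H$. Everything else — reducing to the chordal case via Theorem~\ref{th:chordal}, and concluding that a confined attachment yields the clique cutset $\{u,v\}$ — is routine once the lemma is in hand.
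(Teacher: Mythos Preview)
The paper does not give its own proof of this statement; it is quoted as a known result from~\cite{confortiCKV97} and used as background. Your sketch follows exactly the approach of that original reference: reduce to the case where $G$ contains a hole $H$, prove an attachment lemma showing that every connected component of $G\setminus V(H)$ has its $H$-neighbourhood contained in a single edge of $H$, and then read off a clique cutset of size at most two. That strategy is correct, and your identification of the attachment lemma as the only substantive step is accurate. The single-vertex case is exactly Lemma~\ref{l:Hv} of the present paper.

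Two small corrections to your sketch. First, in the minimal-$C$ argument, minimality does \emph{not} force the interior of the induced path $P$ to have no neighbours in $H$; it only forces each proper connected subpath of $P$ to have its $H$-neighbourhood contained in a single edge. The case analysis must therefore allow interior nodes of $P$ to see $H$ (with those neighbourhoods confined to edges overlapping the endpoints' attachments), and this is where the pyramid and prism cases actually arise. Second, your remark about ``merging'' two components $C_1,C_2$ attached to the same edge is unnecessary: distinct components of $G\setminus V(H)$ have no edges between them, so once each $C_i$ is confined to an edge, any such edge already yields a clique cutset separating $C_i$ from $V(H)\setminus\{u,v\}$, regardless of where the other components attach.
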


To state the next theorem, we need the notion of a line graph.  If $R$
is a graph, then the \emph{line graph} of $R$ is the graph $G$ whose
nodes are the edges of $R$ and such that two nodes of $G$ are
adjacent in $G$ whenever they are adjacent edges of $R$.  We write
$G=L(R)$.

We need several results about line graphs.  A {\it diamond} is a
graph obtained from a $K_4$ by deleting an edge. A {\it claw} is a
graph induced by nodes $u,v_1,v_2,v_3$ and edges $uv_1,uv_2,uv_3$.

\begin{theorem}[Harary and Holzmann \cite{harary.holzmann:lgbip}]
  \label{th:HararyH}
  A graph is (claw, diamond)-free graph if and only if it is the line
  graph of a triangle-free graph.
\end{theorem}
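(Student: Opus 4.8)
The plan is to prove the two directions separately, the forward direction being routine and the converse requiring a Krausz-type analysis of the maximal cliques of $G$.

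For the forward direction, suppose $G=L(R)$ with $R$ triangle-free. First note that $L(R)$ is claw-free for \emph{any} graph $R$: if edges $e,f_1,f_2,f_3$ of $R$ induced a claw with center $e$, then each $f_i$ meets $e=uv$, so by pigeonhole two of them, say $f_1$ and $f_2$, share an endpoint of $e$ and hence are adjacent edges of $R$, contradicting $f_1f_2\notin E(G)$. For diamond-freeness I would use the hypothesis: if $e_a,e_b,e_c,e_d$ are edges of $R$ inducing a diamond with $e_ce_d\notin E(G)$, let $x$ be the common endpoint of the adjacent edges $e_a,e_b$; since $e_c$ is adjacent to both $e_a$ and $e_b$ but $R$ has no triangle, $e_c$ must pass through $x$ (otherwise $e_a,e_b,e_c$ would form a triangle of $R$), and likewise $e_d$ passes through $x$, so $e_c$ and $e_d$ are adjacent in $R$, i.e. $e_ce_d\in E(G)$, a contradiction.

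For the converse, let $G$ be (claw, diamond)-free and let $\mathcal K$ be the family of maximal cliques of $G$ of size at least two. The two facts I would establish are: (i) every edge of $G$ lies in exactly one member of $\mathcal K$, equivalently any two distinct members of $\mathcal K$ meet in at most one node; and (ii) every node of $G$ lies in at most two members of $\mathcal K$. For (i), if an edge $uv$ lay in two maximal cliques $K_1\ne K_2$, pick $a\in K_1\sm K_2$; by maximality of $K_2$ there is $b\in K_2$ with $ab\notin E(G)$, and then $b\notin K_1$ and $b\notin\{u,v\}$, so $\{a,u,v,b\}$ induces a diamond. For (ii), if a node $v$ lay in three maximal cliques, picking a neighbour $a_i\ne v$ of $v$ in each gives three nodes that are pairwise distinct and pairwise non-adjacent by (i) (an edge $a_ia_j$ would put $\{v,a_i,a_j\}$ in a common maximal clique, forcing two of the three cliques to coincide), so $\{v,a_1,a_2,a_3\}$ induces a claw.

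Now build $R$ as follows: its node set consists of $\mathcal K$ together with, for each node $v$ of $G$ that lies in fewer than two members of $\mathcal K$, one or two new ``pendant'' nodes. To each node $v$ of $G$ I associate an edge $e_v$ of $R$: if $v\in K_1\cap K_2$ with $K_1,K_2\in\mathcal K$ distinct, set $e_v=K_1K_2$; if $v$ lies in exactly one $K_1\in\mathcal K$, set $e_v=K_1p_v$ for a fresh pendant $p_v$; if $v$ is in no member of $\mathcal K$ (so $v$ is isolated in $G$), set $e_v=p_vp'_v$ for two fresh pendants. By (i) and (ii) this is well defined, and $v\mapsto e_v$ is a bijection from $V(G)$ onto $E(R)$ (two nodes mapping to the same edge $K_1K_2$ would again force $K_1=K_2$ via (i), so $R$ is simple). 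Two nodes $u,v$ of $G$ are adjacent iff some member of $\mathcal K$ contains both iff $e_u$ and $e_v$ share an endpoint, so $G=L(R)$; and $R$ is triangle-free, since a triangle of $R$ would have all three nodes in $\mathcal K$ (pendants have degree $1$), yielding $K_1,K_2,K_3\in\mathcal K$ and nodes $u\in K_1\cap K_2$, $v\in K_2\cap K_3$, $w\in K_1\cap K_3$ of $G$ with $\{u,v,w\}$ a triangle of $G$, which by (i) lies in a unique member of $\mathcal K$, forcing $K_1=K_2=K_3$. The only place where care is really needed is in this construction: checking that $R$ is simple and that the degenerate cases of nodes lying in one clique or none are correctly absorbed by the pendant nodes.
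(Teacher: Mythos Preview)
The paper does not prove this theorem: it is quoted as a known result of Harary and Holzmann and used as a black box (notably in the proof of Lemma~\ref{l:lgtfc} and, implicitly, via Lemma~\ref{l:duk}). So there is nothing to compare against; your task was to supply a proof where the paper chose to cite one.

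Your proof is correct. The forward direction is the standard argument; the only point worth making explicit is that when you pick the common endpoint $x$ of $e_a$ and $e_b$, this $x$ is \emph{unique} because $R$ is simple, which is precisely what forces both $e_c$ and $e_d$ to pass through the same vertex. For the converse, you carry out exactly the Krausz-type construction one would expect: facts (i) and (ii) are right (and (i) is essentially the paper's Lemma~\ref{l:duk}), the map $v\mapsto e_v$ is well defined and bijective, simplicity of $R$ follows from (i), and triangle-freeness of $R$ is handled correctly --- the key observation being that the three vertices $u,v,w$ are pairwise distinct by (ii), hence form a genuine triangle in $G$, and then (i) collapses $K_1,K_2,K_3$. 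The degenerate cases (vertices in one or zero members of $\mathcal K$) are absorbed cleanly by your pendant nodes.
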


The following characterises the line graphs that actually appear in
our classes.  A graph $G$ is \emph{chordless} if every cycle of $G$ is
chordless.  Note that chordless graphs have a full structural
description not needed here and explained
in~\cite{aboulkerRTV:propeller}.

\begin{lemma}
  \label{l:lgtfc}
  For a graph $G$, the following three conditions are equivalent.
  \begin{enumerate}
  \item\label{i:lg1} $G$ is a (wheel, diamond)-free line graph.
  \item\label{i:lg2} $G$ is the line graph of a triangle-free chordless graph.
  \item\label{i:lg3} $G$ is (wheel, diamond, claw)-free.
  \end{enumerate}
\end{lemma}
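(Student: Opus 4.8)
The plan is to prove the cyclic chain \ref{i:lg1}$\Rightarrow$\ref{i:lg3}$\Rightarrow$\ref{i:lg2}$\Rightarrow$\ref{i:lg1}, relying on Theorem~\ref{th:HararyH} and two elementary facts about line graphs. The first fact is that every line graph is claw-free: if $\{a;b,c,d\}$ induces a claw in $L(R)$ with center $a$, then, viewing $a,b,c,d$ as edges of $R$, each of $b,c,d$ shares an end with $a$ while $b,c,d$ pairwise share no end; so at most one of them contains each of the two ends of $a$, and hence there are at most two of them, a contradiction. Given this, \ref{i:lg1}$\Rightarrow$\ref{i:lg3} is immediate, and \ref{i:lg3}$\Rightarrow$\ref{i:lg1} follows since a (claw, diamond)-free graph is a line graph by Theorem~\ref{th:HararyH}.

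The heart of the matter is the following fact: \emph{if $R$ is triangle-free, then $L(R)$ contains a wheel if and only if $R$ is not chordless.} For the ``if'' direction, pick a cycle $D$ of $R$ that has a chord $f=uv$; since a triangle has no chord, $D$ has length at least~$4$. Two edges of a cycle are adjacent in $L(R)$ precisely when they are consecutive on the cycle, so the edge set of $D$ induces a hole $H$ in $L(R)$; and $f$, being a chord of $D$, is a vertex of $L(R)$ that is not on $H$ but is adjacent to the four pairwise distinct edges of $D$ incident to $u$ or to $v$. Hence $(H,f)$ is a wheel of $L(R)$.

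For the ``only if'' direction, let $(H,c)$ be a wheel of $L(R)$, so $c\notin V(H)$ and $c$ has at least three neighbors on $H$. The key step is that $H$ comes from a cycle of $R$. Write $H=e_1\cdots e_\ell e_1$ with $\ell\ge 4$, and for each $i$ (indices mod $\ell$) choose $x_i\in e_i\cap e_{i+1}$. If $x_i=x_{i+1}$, this common vertex lies in $e_i\cap e_{i+2}$, so $e_i$ and $e_{i+2}$ are adjacent in $L(R)$, contradicting that $H$ is an induced cycle of length at least~$4$; hence $x_i\ne x_{i+1}$, and therefore $e_{i+1}=\{x_i,x_{i+1}\}$. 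Likewise, if $x_a=x_b$ with $a\ne b$, then $e_{a+1}$ and $e_{b+1}$ share this vertex, forcing $a$ and $b$ to be consecutive, say $b=a+1$, and then $x_a\in e_a\cap e_{a+2}$, a contradiction again. So the $x_i$ are pairwise distinct and $D:=x_1\cdots x_\ell x_1$ is a cycle of $R$ with $E(D)=V(H)$. Now $c$ is an edge $uv$ of $R$ with $uv\notin E(D)$, and the neighbors of $c$ on $H$ are exactly the edges of $D$ incident to $u$ or to $v$; since any vertex of $R$ meets at most two edges of the cycle $D$ (two if it lies on $D$, none otherwise), $c$ having at least three neighbors on $H$ forces both $u$ and $v$ to lie on $D$. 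Then, as $uv\notin E(D)$, the edge $uv$ is a chord of $D$, so $R$ is not chordless.

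It remains to close the chain. For \ref{i:lg3}$\Rightarrow$\ref{i:lg2}: a (claw, diamond)-free graph $G$ equals $L(R)$ for some triangle-free $R$ by Theorem~\ref{th:HararyH}, and since $G=L(R)$ is wheel-free, the fact above gives that $R$ is chordless. For \ref{i:lg2}$\Rightarrow$\ref{i:lg1}: if $G=L(R)$ with $R$ triangle-free and chordless, then $G$ is a line graph, it is diamond-free by Theorem~\ref{th:HararyH}, and it is wheel-free by the fact above. I expect the main obstacle to be the bookkeeping in the ``only if'' direction of the fact, namely verifying carefully that a hole of $L(R)$ is the edge set of a cycle of $R$; the rest is routine.
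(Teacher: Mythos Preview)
Your proof is correct and follows essentially the same approach as the paper's: both arguments hinge on Theorem~\ref{th:HararyH} together with the correspondence ``wheel in $L(R)$ $\leftrightarrow$ chord of a cycle in $R$'' (for triangle-free $R$), and your verification that a hole of $L(R)$ is the edge set of a cycle of $R$ is more detailed than the paper's. The one organisational difference is that the paper proves \ref{i:lg1}$\Rightarrow$\ref{i:lg2} directly by taking an arbitrary root $R$ with $G=L(R)$ and replacing each triangle component of $R$ by a claw to force triangle-freeness, whereas you route through \ref{i:lg3} and let Theorem~\ref{th:HararyH} hand you a triangle-free root; your detour is slightly cleaner since it avoids the root-modification step.
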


\begin{proof}
  \ref{i:lg1}$\rightarrow$\ref{i:lg2}.  Let $R$ be such that
  $G=L(R)$.  For every connected component of $R$ that is isomorphic
  to a triangle, we erase the triangle and replace it by a claw.  This
  yields a graph $R'$ and $G= L(R') = L(R)$ because a claw and a triangle
  have the same line graph.  We claim that $R'$ is triangle-free, so
  suppose for a contradiction that $R'$ contains a triangle $T = abc$.  By
  the construction of $R'$, $T$ is not a connected component of $R'$,
  so there exists a node $d$ not in $T$ with a neighbor in $T$, say
  $a$.  Now the edges $ab, bc, ac, da$ of $R'$ induce a diamond in
  $G$, a contradiction.  Also $R'$ is chordless because the edge set
  of a cycle together with a chord of that cycle in $R'$ yields a
  wheel in $L(R')$ (centred at the chord).

  \ref{i:lg2}$\rightarrow$\ref{i:lg3}. Since $G$ is the line graph of
  a triangle-free graph $R$, by Theorem~\ref{th:HararyH}, $G$ is (diamond,
  claw)-free.  Suppose for a contradiction that $G$ contains a wheel
  $(H, c)$.  Let $H= v_1\dots v_k v_1$.  So, in $R$ and with
  subscripts taken modulo $k$,   $v_1, \dots, v_k$ are
  edges of $R$, and for $i=1, \dots, k$, $v_i$ is adjacent to
  $v_{i+1}$ and $v_{i-1}$, and to no other edges among the $v_j$'s
  since $H$ is a hole.  It follows that $v_1, \dots, v_k$ are the
  edges of a cycle $C$ of $R$.  Now, $c$ is an edge of $R$ that is
  adjacent to at least three edges of $C$.  It is therefore a chord of
  $C$, a contradiction.

  \ref{i:lg3}$\rightarrow$\ref{i:lg1}. Since $G$ is (diamond,
  claw)-free, it is a line graph by Theorem~\ref{th:HararyH}, and it
  is (wheel, diamond)-free by assumption.
\end{proof}

Our first decomposition theorem is the following. The proof is given
in Section~\ref{sec:onlyprism}.  Note that by Lemma~\ref{l:lgtfc}, the
line graph of a triangle-free chordless graph is only-prism
(because every pyramid and every theta contains a claw).

\begin{theorem}
  \label{th:decWTPFRee}
  If $G$ is an only-prism graph, then $G$ is the line graph of a
  triangle-free chordless graph or $G$ admits a clique cutset.
\end{theorem}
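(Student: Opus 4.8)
The plan is to take an only-prism graph $G$ that has no clique cutset and show it is the line graph of a triangle-free chordless graph, which by Lemma~\ref{l:lgtfc}\ref{i:lg3} amounts to showing $G$ is (wheel, diamond, claw)-free. Since $G$ is only-prism, it is already (theta, wheel, pyramid)-free, so wheels are excluded by hypothesis and it remains only to rule out diamonds and claws under the assumption that $G$ has no clique cutset.

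First I would dispose of diamonds. Suppose $G$ contains a diamond on vertices $\{a,b,c,d\}$ with $ab$ the missing edge, so $cd$ is an edge and both $c,d$ are adjacent to both $a$ and $b$. Because $G$ has no clique cutset, $\{c,d\}$ is not a cutset, so there is a path from $a$ to $b$ in $G\sm\{c,d\}$; take a shortest such path $P$. Then $a,P,b$ together with $c$ and $d$ should be analysed: depending on how $c$ and $d$ attach to the interior of $P$, we get either a theta, a pyramid, a prism, or a wheel (center $c$ or $d$) on the hole formed by $P$ plus one of $c,d$ — all forbidden except a prism, and I would need to check that the prism case is also impossible here (a prism would force $P$ to have length $1$, i.e. $ab\in E(G)$, contradiction, or it forces a triangle incompatible with $ab\notin E(G)$). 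This contradiction shows $G$ is diamond-free.

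Next, the claw. Suppose $G$ contains a claw with center $u$ and leaves $v_1,v_2,v_3$, pairwise nonadjacent. Since $G$ has no clique cutset, $N(u)$ is not a cutset (it is not even a clique, but more relevantly $G\sm N[u]$... ) — more carefully, since $\{u\}$ together with common neighbors is not a clique cutset, I would look for paths among the $v_i$ avoiding $u$. The standard argument: since $G$ has no clique cutset, the set $N(u)$ is not a cutset separating $v_1$ from $v_2$, so there is a path $P_{12}$ from $v_1$ to $v_2$ internally avoiding $N(u)$; similarly $P_{13}$ and $P_{23}$. Analysing the union of these three paths with $u$ — after cleaning up using diamond-freeness (already established) and the absence of chords — yields one of the three 3-path-configurations or a wheel centered at $u$: a theta or pyramid $3PC(\cdot,\cdot)$ or $3PC(\Delta,\cdot)$ rooted at $u$, all forbidden, and again the prism case must be ruled out separately (a prism would require $v_1,v_2,v_3$ or a neighboring triangle, contradicting that the $v_i$ are pairwise nonadjacent and that $u$ has exactly these neighbors in the configuration). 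This contradiction gives claw-freeness.

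The main obstacle I expect is the bookkeeping in the second step: ensuring the three paths among the leaves of the claw can be chosen to be internally disjoint and without unwanted chords or adjacencies, so that their union with $u$ is genuinely an \emph{induced} Truemper configuration rather than something with extra edges. This is where diamond-freeness (just proved) and a careful shortest/minimal choice of paths, plus possibly an appeal to the preliminary lemmas of Section~\ref{sec:pl}, will do the work; the prism is the one Truemper configuration that is allowed, so each case analysis must explicitly argue why the configuration obtained cannot be a prism (typically because a prism contains no claw, which is exactly the parenthetical remark before the theorem: every pyramid and every theta contains a claw, and dually a prism does not, so a prism simply cannot arise from a claw-rooted construction), and symmetrically why the diamond construction cannot yield a prism.
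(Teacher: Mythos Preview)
Your overall plan---assume no clique cutset, show $G$ is diamond-free and claw-free, then invoke Lemma~\ref{l:lgtfc}---is exactly the paper's strategy. Diamond-freeness is Lemma~\ref{diamond} (proved there for arbitrary wheel-free graphs via a maximal-clique argument, not a path analysis), so that step is already available.

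The claw step is where your sketch has a genuine gap and where the paper does something different. Two concrete problems with your approach. First, the claim that you can find $P_{ij}$ from $v_i$ to $v_j$ \emph{internally avoiding $N(u)$} does not follow from the absence of clique cutsets: $N(u)$ is not a clique. You could appeal to Lemma~\ref{star} to rule out star cutsets, but that only tells you $N[u]$ is not a cutset, not that paths between the $v_i$ can be routed with interiors outside $N(u)$. Second, even granting such paths, the ``bookkeeping'' you flag is not a detail: three paths between three vertices generically share vertices and have cross-edges, and there is no clean reason the union with $u$ is an \emph{induced} theta or pyramid. Your closing remark that a prism contains no claw is correct and does dispose of the prism outcome, but it does not help you reach an induced configuration in the first place.

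The paper's argument for the claw uses a \emph{single} path: take a minimal path $P$ in $G\setminus v$ joining two of the leaves, say $x$ and $y$. If no interior vertex of $P$ is adjacent to $v$, then $P\cup\{v\}$ is a hole $H$, and by Lemma~\ref{l:Hv} the third leaf $z$ has $v$ as its \emph{unique} neighbour on $H$; this is exactly the hypothesis of Lemma~\ref{l:HU}, which says that in an only-prism graph a vertex with a unique neighbour on a hole forces a clique cutset. If instead some interior vertex $v'$ of $P$ sees $v$, diamond-freeness forces $N(v')\cap\{x,y,z\}=\{z\}$, so $\{v,x,y,v'\}$ is again a claw and $xPv'$ is a strictly shorter path, contradicting minimality. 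The real work is therefore hidden in Lemma~\ref{l:HU} (which itself rests on Lemmas~\ref{l:2holesT} and~\ref{l:2holes}); isolating and proving that ``unique neighbour on a hole'' statement is the key idea your proposal is missing.
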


To state the next theorem, we need a new basic class and a new
decomposition that we define now.  We start with the basic class.

An edge of a graph is \emph{pendant} if one of its ends has degree~1.
Two pendant edges of a  tree $T$ are \emph{siblings} if the unique path
of $T$ linking them contains at most one node of degree at least~3.  A
tree is \emph{safe} if for every node $u$ of degree 1, the neighbor
$v$ of $u$ has degree at most 2 and $uv$ has at most one sibling.  A
\emph{pyramid-basic} graph is any graph
$G$ constructed as follows:

\begin{itemize}
\item Consider a safe tree $T$ and give to each pendant edge of $T$
  a label $x$ or $y$, in such a way that for every pair of siblings,
  distinct labels are given to the members of the pair.
\item Build the line graph $L(T)$, and note that since the nodes of
  $L(T)$ are the edges of $T$, some nodes of $L(T)$ have a label (they
  are the nodes of degree~1 of $L(T)$).
\item Construct $G$ from $L(T)$ by adding a node $x$ adjacent to
  every node with label $x$, and a node $y$ adjacent to $x$ and to
  every node with label~$y$.
\end{itemize}

\begin{lemma}
  \label{l:opinC}
  Every pyramid-basic graph is only-pyramid.
\end{lemma}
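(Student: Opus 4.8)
The plan is to verify directly that a pyramid-basic graph $G$ contains no theta, no prism and no wheel; these are precisely the Truemper configurations forbidden by the term \emph{only-pyramid}. Write $B=L(T)$, so that $V(G)=V(B)\cup\{x,y\}$ with $N_G(x)=\{y\}\cup X$ and $N_G(y)=\{x\}\cup Y$, where $X$ and $Y$ are the sets of nodes of $B$ labelled $x$ and $y$. Two properties of $B$ drive the whole argument. Since $T$ is triangle-free, Theorem~\ref{th:HararyH} shows that $B$ is \emph{(claw, diamond)-free}; and $B$ is \emph{hole-free}, since a hole $v_1\dots v_k$ of $L(T)$ would, exactly as in the proof of Lemma~\ref{l:lgtfc}, be the edge set of a cycle of the forest $T$. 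I also record that $X$ and $Y$ are disjoint sets of nodes of \emph{$B$-degree $1$} (the degree-$1$ nodes of $L(T)$), that $x$ is adjacent to no node of $Y$ and $y$ to no node of $X$, and that $x\sim y$.

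The key tool, which I would isolate first, is: \emph{in $B$, no node has three neighbours on a chordless path of $B$.} For if $c$ had neighbours $q_{i_1},q_{i_2},q_{i_3}$, $i_1<i_2<i_3$, on a chordless path $q_0\dots q_t$, then either two successive of them lie at distance at least $2$ along the path, which produces a hole through $c$, or all three are consecutive along the path, which produces a diamond on $c,q_{i_1},q_{i_1+1},q_{i_1+2}$; both contradict the properties of $B$. Now let $F\subseteq G$ be a theta, prism or wheel; each contains a hole, and the whole construction is symmetric under exchanging $x\leftrightarrow y$ and $X\leftrightarrow Y$. If $F$ avoids $\{x,y\}$ then $F\subseteq B$ contains a hole, which is impossible.

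Next I would treat the case where $F$ meets $\{x,y\}$ in a single node, say $x$. A node of a prism, and a non-apex node of a theta, lies on at most two of the three holes of $F$; hence some hole of $F$ avoids $x$ and lies in $B$, a contradiction. The only exception is that $x$ is an apex of a theta (an apex lies on all three holes): then the opposite apex, together with its three neighbours along the paths, all lying in $B$, induces a claw in $B$. For a wheel, if $x$ is the centre then the rim is a hole contained in $B$; and if $x$ is on the rim, its centre $c$ lies in $B$ with $c\not\sim x$ (otherwise $c\in X$ would be a $B$-degree-$1$ node with three rim-neighbours), so $c$ has three neighbours on the chordless path obtained by deleting $x$ from the rim, contradicting the key tool.

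The substantial case is $\{x,y\}\subseteq F$, which I would organise according to the position of the edge $xy$ in $F$. If $x,y$ are consecutive nodes of a single path of a prism, or consecutive interior nodes of a single path of a theta, then the hole formed by the other two paths avoids both and lies in $B$. The remaining placements are pinned down by the degree-$1$ and disjointness constraints. If $F$ is a prism and $xy$ is a triangle edge $a_1a_2$, then the third triangle node $a_3$ is adjacent to both $x$ and $y$, forcing $a_3\in X\cap Y=\emptyset$. If $F$ is a theta and $xy$ is incident to an apex, then the far apex again gives a claw in $B$ (in the boundary sub-case where the incident path has length $2$, that apex is instead forced to be a $B$-degree-$1$ node with two neighbours in $B$). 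If $F$ is a wheel, then $x,y$ cannot be its centre --- among the at least three rim-neighbours of the centre, one would be a node of $X\cup Y$, hence a rim node of $B$-degree $1$ with two rim-neighbours in $B$, which is impossible --- so $x,y$ are consecutive on the rim, the centre $c$ lies in $B$ with $c\not\sim x,y$, and $c$ has three neighbours on the chordless path ``rim minus $\{x,y\}$'', contradicting the key tool. \textbf{The main obstacle} I anticipate is exactly this last case: the careful enumeration of where the edge $xy$ can lie in each configuration, and the handling of short paths (lengths $1$ and $2$) and of apex and triangle positions. The finitely many degenerate safe trees, for which $B$ has at most two nodes, give graphs on at most four nodes and are checked directly.
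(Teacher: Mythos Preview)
Your argument is correct, and it is organised differently from the paper's.  The paper treats the three configurations with three separate, tailored arguments.  For thetas it observes that, because $L(T)$ is claw-free and every labelled node has degree~1 in $L(T)$ (hence degree~2 in $G$), only $x$ and $y$ can be centres of claws in $G$; since $\{x,y\}$ is a clique, $G$ contains no theta.  For prisms it first notes that $x$ and $y$ lie in no triangle, so both triangles of a putative prism sit inside $L(T)$; it then takes a cut-edge $e$ of $T$ separating the two corresponding branch vertices and deduces that $\{e,x,y\}$ separates the two triangles in $G$, forcing one path of the prism through $x$ and another through $y$, which contradicts $x\sim y$.  For wheels it classifies the holes of $G$ directly (every hole must pass through $x$ or $y$, since $L(T)$ is hole-free) and checks that no node has three neighbours on such a hole.

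Your route is more uniform: one reusable ``key tool'' (no node of $B$ has three neighbours on a chordless $B$-path, forced by $B$ being hole- and diamond-free), followed by a single case split on how the configuration $F$ meets $\{x,y\}$.  The paper's one-line theta argument (claw-centres form a clique) is slicker than your apex-claw analysis and worth noting; on the other hand, your prism argument via ``the third hole avoids $x,y$'' is more elementary than the cut-edge manoeuvre, and your key tool packages the wheel case more cleanly than the paper's ad hoc hole analysis.  Both proofs rely on the safe-tree hypothesis through the same fact you record, namely that every labelled node has $B$-degree~1.
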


\begin{proof}
  Let $G$ be constructed as above.

  Since $L(T)$ is claw-free by Theorem~\ref{th:HararyH}, and every node of $L(T)$ with a label
  has degree~1 in $L(T)$ and degree~2 in $G$, we see that no node in
  $G$ apart from $x$ and $y$ can be the center of a claw.  It follows
  that the centers of claws in $G$ form a clique, so $G$ cannot
  contain a theta.

  Suppose for a contradiction that $G$ contains a prism, say a
  3PC($a_1a_2a_3$, $b_1b_2b_3$).  Note that $x$ and $y$ are not
  contained in any triangle of $G$, so $a_1, a_2, a_3, b_1, b_2, b_3$
  are all members of $L(T)$.  In $T$, $a_1, a_2, a_3$ are edges with a
  common end $a$ and $b_1, b_2, b_3$ are edges with a common end $b$.
  In $T$, there is a cut-edge $e$ separating $a$ and $b$.  So,
  $\{e, x, y\}$ is a node-cut of $L(T)$ that separates
  $\{a_1, a_2, a_3\}\sm \{e\}$ from $\{b_1, b_2, b_3\} \sm \{e\}$. It
  follows that one path of the prism goes through $x$ while another
  path goes through $y$.  This is a contradiction since $x$ and $y$
  are adjacent.

  To prove that $G$ is wheel-free, we study the holes of $G$.  Let $H$
  be a hole of $G$.  Since $L(T)$ contains no hole, $H$ must contain
  $x$ or $y$.  If it contains exactly one of them, say $x$ up to
  symmetry, then $H= x p_1\dots p_kx$ and $p_1$ and $p_k$ have
  degree~1 in $L(T)$.  Since all neighbors of $y$ in $L(T)$ have
  degree 1 in $L(T)$, $y$ has no neighbor in $H\sm x$.  A node of
  $L(T)$ not in $H$ is an edge of $T$ that can be adjacent (in $T$) to
  at most two edges among $p_1, \dots, p_k$ (that are indeed edges of
  $T$). And if it is adjacent to two edges, it is a non-pendant node
  in $L(T)$, so it is non-adjacent to $x$.  It follows that no node
  of $G$ can be the center of wheel with rim $H$.

  If $H$ goes through $x$ and $y$, then again $H= x y p_1\dots p_kx$
  and $p_1$ and $p_k$ have degree~1 in $L(T)$.  As above, no node of
  $G$ can have three neighbors in $H$. It follows that $G$ is
  wheel-free.
\end{proof}

We now define a decomposition that we need.
A graph $G$ has an {\em almost 2-join} $(X_1,X_2)$ if $V(G)$ can be
partitioned into sets $X_1$ and $X_2$ so that the following hold:
\begin{itemize}
\item For $i=1,2$, $X_i$ contains disjoint nonempty sets $A_i$ and
  $B_i$, such that every node of $A_1$ is adjacent to every node
  of $A_2$, every node of $B_1$ is adjacent to every node of
  $B_2$, and there are no other adjacencies between $X_1$ and $X_2$.
\item For $i=1,2$, $|X_i|\geq 3$.
\end{itemize}

An almost 2-join $(X_1, X_2)$ is a \emph{2-join} when for $i=1,2$, $X_i$
contains at least one path from $A_i$ to $B_i$, and if $|A_i|=|B_i|=1$
then $G[X_i]$ is not a chordless path.

We say that $(X_1,X_2,A_1,A_2,B_1,B_2)$ is a {\em split} of this
2-join, and the sets $A_1,A_2,B_1,B_2$ are the {\em special sets} of
this 2-join.  We often use the following notation: $C_i = X_i\sm (A_i
\cup B_i)$ (possibly, $C_i = \emptyset$).

A pyramid is {\em long} if all of its paths are of length at least~2
(note that the long pyramids are precisely the wheel-free pyramid).
Our second decomposition theorem is the following.  It is proved in
Section~\ref{sec:onlypyramid}.

\begin{theorem}
  \label{ptwfree}
An only-pyramid graph is either one of the following graphs:
\begin{itemize}
\item a clique,
\item a hole,
\item a long pyramid, or
\item a pyramid-basic graph,
\end{itemize}
or it has a clique cutset or a 2-join.
\end{theorem}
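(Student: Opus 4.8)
The plan is to take an only-pyramid graph $G$ that has no clique cutset and no 2-join, and show that $G$ is a clique, a hole, a long pyramid, or a pyramid-basic graph. The natural starting point is to import the structure already available for even-hole-free graphs, since an only-pyramid graph contains no theta and no prism; by the decomposition theorem for even-hole-free graphs in~\cite{kmv:evenhole} (which exactly forbids thetas and prisms but allows pyramids and wheels), $G$ either belongs to a basic class of that theorem, or admits one of the cutsets used there (clique cutset, 2-join, or one of the star/double-star cutsets). So the first step is to run through that list: a clique cutset is excluded by hypothesis; a 2-join is excluded by hypothesis; and the star-cutset-type decompositions need to be ruled out using the extra hypothesis that $G$ is wheel-free (a wheel is precisely what a star cutset tends to create, so wheel-freeness should forbid these, possibly after observing the decomposition is ``of minimal type''). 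What remains is to analyze the basic classes of~\cite{kmv:evenhole} under the additional constraint of being (theta, wheel)-free.

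The second, and main, step is this basic-class analysis. The basic even-hole-free graphs are (roughly) cliques, holes, long pyramids, and certain line graphs — together with a few sporadic/extended configurations. Cliques, holes and long pyramids are already on our list, so the real work is to show that the line-graph basic class, once intersected with (theta, wheel)-free graphs, is exactly the class of pyramid-basic graphs. Here Lemma~\ref{l:lgtfc} is the key tool: a (wheel, diamond, claw)-free line graph is the line graph of a triangle-free chordless graph. The claws in an only-pyramid graph form a clique (this is essentially the argument in the proof of Lemma~\ref{l:opinC}, run in reverse), so after deleting at most two vertices $x,y$ forming that clique, the remainder is a (wheel, diamond, claw)-free line graph $L(R)$ with $R$ triangle-free and chordless. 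One then checks that the way $x$ and $y$ reattach to $L(R)$ — governed by wheel-freeness and the absence of thetas — forces $R$ to be a tree (a cycle in $R$ plus the attachments of $x,y$ would create a wheel or a theta), that the tree is safe (a node of degree $1$ whose neighbor has degree $\geq 3$, or a pendant edge with two siblings, would again yield a forbidden configuration), and that the labels come from siblings getting distinct labels; this reconstructs exactly the pyramid-basic recipe. The converse direction (pyramid-basic graphs really are only-pyramid, and have no clique cutset in general) is already Lemma~\ref{l:opinC}, so it need not be repeated.

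I expect the main obstacle to be the bookkeeping in the second step: matching the precise statement of the even-hole-free decomposition theorem of~\cite{kmv:evenhole} — whose basic classes and cutset list are somewhat involved, including ``extended'' nontrivial basic classes and the exact form of the star/double-star cutsets — against our two clean hypotheses (no clique cutset, no 2-join) plus wheel-freeness. In particular, one must be careful that ruling out the star-type cutsets really only needs wheel-freeness and does not secretly need a minimality assumption on $G$ that we have not made; if it does, the cleanest fix is to phrase the proof for a vertex-minimal counterexample, or to invoke the version of the even-hole-free theorem stated for graphs with no star cutset directly. A secondary subtlety is handling the degenerate pyramids: a pyramid with a path of length $1$ is not long, and one must confirm such a graph is captured by the pyramid-basic or clique cases (indeed a pyramid with a short path contains a diamond-like attachment and, read as a line graph, corresponds to a small safe tree), so that the four basic outcomes in the statement are genuinely exhaustive.
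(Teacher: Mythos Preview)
Your overall strategy is the paper's: invoke the decomposition theorem from~\cite{kmv:evenhole}, eliminate the star-type cutset via Lemma~\ref{star} (a star cutset in a (theta, wheel)-free graph forces a clique cutset, and a bisimplicial cutset is a star cutset), and then analyze the remaining basic class. Two corrections are needed, one a genuine gap and one an overcomplication.

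The gap: the theorem from~\cite{kmv:evenhole} that the paper actually uses (stated here as Theorem~\ref{dehf}) applies to \emph{(diamond, 4-hole, prism, theta, even wheel)}-free graphs, not to (theta, prism)-free graphs as you write. An only-pyramid graph is automatically (prism, theta, even wheel)-free, but may contain diamonds and 4-holes. Before invoking Theorem~\ref{dehf} you must establish diamond-freeness and 4-hole-freeness; this is exactly what Lemma~\ref{diamond} (a wheel-free graph containing a diamond has a clique cutset) and Lemma~\ref{4hole} (a connected only-pyramid graph containing a 4-hole is a 4-hole or has a clique cutset) do. Since you assume no clique cutset, both follow, but Lemma~\ref{4hole} is absent from your plan and is not entirely trivial.

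The overcomplication: the basic class handed to you by Theorem~\ref{dehf} is already the \emph{extended nontrivial basic pyramid graph}, which comes equipped with the two special nodes $x,y$ and the line-graph-of-a-tree structure on the rest. You do not need to rediscover $x,y$ as ``the clique of claw centres'' or to argue from scratch that the remainder is $L(T)$ with $T$ a tree. All that is needed is: wheel-freeness kills the extension nodes $u_i$ (each is explicitly a wheel centre), forces the graph to be long, and forces $T$ to be safe (a pendant edge violating safety is the centre of a wheel built from two segments out of its big clique). This is precisely the paper's argument. Finally, your worry about short pyramids is moot: a pyramid with a path of length~1 contains a wheel (noted in the paper right after the definition of ``long''), so such a graph is not only-pyramid and never arises.
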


In Section~\ref{sec:algo}, we will show that the two theorems above
in fact lead to structure theorems: they can be turned into a method that
actually allows us to build every graph in the class that they describe.

\section{Preliminary lemmas}\label{sec:pl}
\label{sec:pl}

\begin{lemma}
  \label{l:duk}
  If $G$ is a diamond-free graph then every edge of $G$ is
  contained in a unique maximal clique of $G$.
\end{lemma}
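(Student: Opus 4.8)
The plan is to prove the statement by contradiction: suppose some edge $uv$ of a diamond-free graph $G$ lies in two distinct maximal cliques $K$ and $K'$. First I would note that since $K \neq K'$ and both are maximal, neither contains the other, so there exist a node $a \in K \sm K'$ and a node $b \in K' \sm K$. Both $a$ and $b$ are adjacent to both $u$ and $v$ (since $u,v \in K \cap K'$ and cliques are complete).

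The key observation is then to examine the four nodes $u, v, a, b$ and the adjacency between $a$ and $b$. If $a$ and $b$ are non-adjacent, then the nodes $u,v,a,b$ with edges $uv, ua, va, ub, vb$ (and no edge $ab$) induce a diamond in $G$, contradicting the hypothesis. Hence $a$ and $b$ must be adjacent. But I would argue this holds for \emph{every} choice of $a \in K \sm K'$ and $b \in K' \sm K$, so in fact $K \cup K'$ is a clique: any two nodes of $K \cup K'$ are adjacent (the cases where both lie in $K$, or both lie in $K'$, are immediate, and the mixed case is exactly what we just established). This contradicts the maximality of $K$ (or of $K'$), since $K \cup K'$ strictly contains $K$.

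To complete the argument I would also address existence: every edge $uv$ is contained in at least one maximal clique, which follows since $\{u,v\}$ itself is a clique and $G$ is finite, so a maximal clique containing it exists by taking any clique of maximum size among those containing $\{u,v\}$. Combining existence with the uniqueness argument above gives the statement.

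I do not expect any serious obstacle here; the only point requiring a little care is making sure the diamond is correctly identified (four distinct nodes, exactly five of the six possible edges present), which in turn requires checking that $a \neq b$, $a \notin \{u,v\}$, and $b \notin \{u,v\}$ — all of which follow from $a \in K\sm K'$, $b \in K'\sm K$, and $u,v \in K \cap K'$. The argument is otherwise elementary.
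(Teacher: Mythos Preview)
Your proof is correct and follows essentially the same approach as the paper. The only minor difference is that the paper avoids your case split: after choosing $w \in K \sm K'$, it uses maximality of $K'$ directly to pick $w' \in K'$ non-adjacent to $w$ (so $\{u,v,w,w'\}$ is immediately a diamond), rather than first picking $b \in K' \sm K$ arbitrarily and then handling the adjacent/non-adjacent cases separately.
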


\begin{proof}
  An edge $uv$ is obviously in at least one maximal clique.  If it is
  not unique, then let $K$ and $K'$ be two distinct maximal cliques
  containing $uv$.  Since by maximality $K \not\subseteq K'$,  there
  exists $w \in K \sm K'$.  By the maximality  of $K'$, there exists
  in $K'$ a non-neighbor $w'$ of $w$. So, $\{u, v, w, w'\}$ induces a
  diamond, a contradiction.
\end{proof}

When $C$ and $H$ are two disjoint sets of nodes of a graph (or
induced subgraphs), we say that $C$ is {\em $H$-complete}, if every
node of $C$ is adjacent to every node of $H$.

\begin{lemma}\label{diamond}
  If $G$ is a wheel-free graph that contains a diamond, then $G$ has a
  clique cutset.
\end{lemma}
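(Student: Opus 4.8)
The statement is: if $G$ is wheel-free and contains a diamond, then $G$ has a clique cutset. Let $\{a,b,c,d\}$ induce a diamond with $bd$ the missing edge, so $ac$ is the edge contained in two triangles $abc$ and $acd$. The natural candidate for a clique cutset is a set containing the edge $ac$. By Lemma~\ref{l:duk}... actually $G$ need not be diamond-free, so instead I would consider a maximal clique $K$ containing the edge $ac$; note $|K| \ge 3$ since $a,b,c \in K$ (after possibly relabelling, at least one of $b,d$ is in a common maximal clique with $a,c$). I claim that $K$, or some clique contained in it, is a cutset separating the rest of $G$.

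First I would set up the key separation claim. Let $K$ be a maximal clique containing $ac$, and suppose for contradiction that $G \sm K$ is connected (and also that $G$ itself is connected, else the empty set is already a clique cutset). Pick two nonadjacent vertices of $K$'s "outside'' — more precisely, since $bd \notin E(G)$ and the diamond sits on $ac$, at least one of $b, d$, say $b$, lies outside a chosen maximal clique $K \ni a,c,d$ (choose $K$ to contain the triangle $acd$; then $b \notin K$ because $bd \notin E$). Since $G \sm K$ is connected, there is a chordless path $P$ from $b$ to some vertex, but more usefully: consider a shortest path $Q$ in $G \sm K$ between two vertices of $K$-neighborhoods that will force a hole. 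The cleaner route: since $b \notin K$ but $b$ is adjacent to $a$ and $c$ (two vertices of $K$), and $G\sm K$ is connected, I want to produce a hole through $b$ together with a vertex of $K$ having $\ge 3$ neighbours on it, contradicting wheel-freeness.

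Here is the hole-building step in detail. Because $K$ is a \emph{maximal} clique and $b \notin K$, there is a vertex $k \in K$ with $bk \notin E(G)$ (otherwise $K \cup \{b\}$ would be a clique, contradicting maximality). Now $b$ is adjacent to $a$ and $c$ but not to $k$, while $a,c,k$ are pairwise adjacent. If $G \sm K$ were connected then... I actually want the whole argument to live near the diamond. So reconsider: assume no clique cutset. Then in particular $N(a) \cap N(c)$ together with the common structure... Let me take $S = V(K)$ and suppose $G \sm S$ is connected. Take $d' \in S$ with $bd' \notin E$ (exists by maximality as above). In $G \sm S$, since it is connected, take a shortest path $R = b - r_1 - \cdots - r_t - d''$ where $d''$ is the first vertex of the path (after $b$) having a neighbour in $S$ that is nonadjacent to... hmm, this is getting delicate. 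The clean classical argument: let $R$ be a shortest path in $G \sm ((S \cap N(b)) \cup \{b\})$ from a neighbour of $b$ in $S$...

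I think the right formulation, and the step I expect to be the \textbf{main obstacle}, is choosing the separator correctly and extracting a chordless cycle plus a high-degree centre. Concretely: among all maximal cliques containing a vertex of the diamond, pick one, call it $K$, minimizing $|V(G)\sm K|$ among those that fail to be cutsets — or better, proceed by induction on $|V(G)|$. For the induction: if $G$ has a clique cutset we are done; otherwise $G$ is connected and every maximal clique is non-separating. Fix the diamond $\{a,b,c,d\}$ and a maximal clique $K \supseteq \{a,c,d\}$; then $b \notin K$ and there is $k \in K$, $k \ne a,c$, with $bk \notin E$ (maximality). Since $G \sm K$ is connected, there is a chordless path $P$ from $b$ to $k$ with interior in $G \sm K$; choose $P$ shortest such. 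Then $b, a, k$ and the path $P$: if $a$ has no neighbour on the interior of $P$, then $a$–$b$–$P$–$k$–$a$ is a hole (using edge $ab$ and $ak$), and $c$ is adjacent to $a, b, k$ — three vertices of this hole — so $(H,c)$ is a wheel unless $c$ has further structure; in any case $c$ has $\ge 3$ neighbours on $H$, contradiction. If $a$ (or $c$) does have neighbours on $P$'s interior, shorten using the subpath to get a smaller hole, and again $c$ (resp.\ $a$) or another diamond vertex serves as a wheel centre. Managing these cases — ensuring the cycle one extracts is chordless and that some vertex genuinely has three neighbours on it — is the crux; everything else (reducing to $G$ connected, invoking maximality of $K$) is routine.
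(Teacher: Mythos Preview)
Your overall plan --- find a clique $K$ attached to the diamond and show that if $K$ is not a cutset then a path around it creates a wheel --- is the right shape, and your ``easy case'' is fine: if neither $a$ nor $c$ has a neighbour on the interior of $P$, then the cycle $a\text{--}b\text{--}P\text{--}k\text{--}a$ is a hole and $c$ (adjacent to $a$, $b$, $k$) is a wheel centre. The gap is in the case you explicitly flag as the crux and then wave away. Suppose $a$ has an interior neighbour and $p_i$ is the one closest to $b$. If $i=1$ you get a triangle, not a hole. If $i\ge 2$, the shortened cycle $a\text{--}b\text{--}p_1\text{--}\cdots\text{--}p_i\text{--}a$ is a hole, but your proposed centre $c$ is adjacent only to $a$ and $b$ on it unless $c$ happens to be adjacent to some $p_j$ with $j\le i$; nothing in your setup forces this. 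The other candidates $d$ and $k$ fail for the same reason: you only control their adjacency to $a$ (and to each other), not to the path. So the ``shorten and find another centre'' step, as stated, does not go through.

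The paper's proof fixes exactly this by a different choice of $K$: rather than a maximal clique containing three diamond vertices, it takes a clique $K$ of size $\ge 2$ such that two \emph{nonadjacent} vertices outside $K$ are both \emph{complete} to $K$, and chooses $K$ maximal with that property. In your language, this is $K=\{a,c\}$ enlarged as far as possible while keeping two nonadjacent $K$-complete vertices outside (initially $b$ and $d$). The payoff is that now \emph{both} ends of the connecting path $P$ are complete to all of $K$, so for any $x,y\in K$ you automatically get a hole through $x$ and a centre $y$ with three neighbours on it when the interior of $P$ misses $K$; and when some interior vertex $c$ of $P$ meets $K$, the maximality of $K$ (not of a clique, but of a clique with two external complete nonneighbours) forces $c$ to miss some $y\in K$, which is then used to carve out a hole on which a known neighbour $x\in K$ of $c$ is the centre. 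That maximality-of-the-right-object step is the idea your argument is missing; with your $K$ you have no analogue of it, and that is why the shortening case stalls.
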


\begin{proof}
  Let $K$ be a clique of size at least~2 in $G$, such that there exist
  two nodes in $G\setminus K$, non-adjacent and $K$-complete.
  Observe that $K$ exists because $G$ contains a diamond.  Suppose
  that $K$ is maximal with respect to this property.  We now prove
  that $K$ is a clique cutset of $G$.  Otherwise, for every pair
  $a, b \in V(G) \setminus K$ of non-adjacent $K$-complete nodes
  there exists a path $P$ from $a$ to $b$ in $G\setminus K$.  Let
  $(a, b, P)$ be a triple as above and chosen subject to the
  minimality of $P$.  If no internal node of $P$ has a neighbor in
  $K$, then for any pair $x, y \in K$, $V(P) \cup \{x, y\}$ induces a
  wheel, a contradiction.  So, let $c$ be the internal node of $P$
  closest to $a$ along $P$ that has a neighbor $x$ in $K$.  We claim
  that $c$ has a non-neighbor $y$ in $K$.  Otherwise, one of the
  triple $(a, c, aPc)$ or $(c, b, cPb)$ contradicts the minimality of
  $P$, unless $P$ has length~2.  In this case, $K\cup \{c\}$
  contradicts the maximality of $K$.  So, our claim is proved.  Let
  $d$ be the neighbor of $y$ in $P$ closest to $c$ along $P$ (note
  that $c\neq d$, so $y d P a y$ has length at least~4).  Now,
  $(y d P a y, x)$ is a wheel, a contradiction.
\end{proof}

A \emph{star cutset} in a graph is a node-cutset $S$ that contains a node
(called a \emph{center}) adjacent to all other nodes of $S$.  Note
that a nonempty clique cutset is a star cutset.

\begin{lemma}\label{star}
  If a (theta, wheel)-free graph $G$ has a star cutset, then
  $G$ has a clique cutset.
\end{lemma}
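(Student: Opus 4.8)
\medskip

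The plan is to take a star cutset $S$ with center $c$ in $G$, and among all star cutsets, pick one that is in some sense as ``tame'' as possible, then show that a component of $G \setminus S$ together with pieces of $S$ builds a theta or a wheel unless $S$ is in fact a clique. First I would isolate the center: let $S$ be a star cutset with center $c$, and let $C_1, C_2$ be two distinct connected components of $G \setminus S$. Note every node of $S \setminus \{c\}$ is adjacent to $c$, so $\{c\}$-completeness is automatic; the obstruction to $S$ being a clique is a pair of non-adjacent nodes $u, v \in S \setminus \{c\}$. The strategy is to choose $S$ to minimise, say, $|S|$ (or to minimise the number of components of $G \setminus S$, or $|V(G) \setminus S|$ — one of these monovariants will do), and derive a contradiction from the existence of such a non-adjacent pair $u, v$.

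\medskip

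The key step: suppose $u, v \in S \setminus \{c\}$ are non-adjacent. I would find, for each of two components $C_1, C_2$ of $G \setminus S$, a chordless path $Q_i$ from $u$ to $v$ with interior in $C_i$ — such a path exists provided both $u$ and $v$ have a neighbor in $C_i$. Guaranteeing that requires care: a priori $u$ might have no neighbor in some component. This is where the minimality of $S$ enters — if $u$ has no neighbor in $C_1$, then $S \setminus \{u\}$ (or $(S \setminus \{u\})$ together with $u$ reassigned appropriately) should still be a star cutset separating $C_1$ from the rest, contradicting minimality; one must check $c$ still centers it and that it is still a cutset, handling the degenerate cases where removing $u$ empties $A$- or $B$-type roles. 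Once I have two such chordless $u$–$v$ paths $Q_1, Q_2$ through distinct components, they share only $u$ and $v$. If there are no edges between $Q_1 \setminus \{u,v\}$ and $Q_2 \setminus \{u,v\}$ and $c$ is adjacent to neither interior, then $Q_1 \cup Q_2$ is a hole, and since $c$ has at least... — actually the cleaner route is: $Q_1 \cup Q_2$ together with $c$ — $c$ is adjacent to $u$ and to $v$ (both in $S$), giving at least two neighbors on the cycle; a third neighbor, or else a theta. Concretely, consider the graph induced by $V(Q_1) \cup V(Q_2) \cup \{c\}$: if $c$ has a third neighbor on the cycle $Q_1 \cup Q_2$ we get a wheel; if not, $c$ has exactly the two neighbors $u, v$, and then $c u$, $c v$ together with $Q_1$ and $Q_2$ form a theta with ends $u$ and $v$ (three internally disjoint paths $u$–$v$: namely $Q_1$, $Q_2$, and $u$–$c$–$v$), unless the $u$–$c$–$v$ path has a chord, i.e. $c$ adjacent to an interior node — but that case was the wheel case. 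Either way, contradiction.

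\medskip

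The main obstacle I anticipate is not the theta/wheel extraction itself, which is essentially the standard ``two paths through a cutset'' argument, but rather the bookkeeping around the center $c$ and the degenerate configurations: $c$ could lie in the closed neighborhood of one of the paths in awkward ways, one of the components could be a single node, $u$ or $v$ could be adjacent to $c$ in a way that collapses the $u$–$c$–$v$ path, and the paths $Q_i$ might need to be chosen minimal so that $c$ does not create extra chords. I would handle this by first replacing $S$ by a \emph{minimal} star cutset with the same center (so no proper subset containing $c$ is a cutset), which forces every node of $S \setminus \{c\}$ to have a neighbor in every component — this is the clean way to guarantee the paths $Q_i$ exist and simultaneously trims the pathological cases. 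Then the final extraction is a short case analysis on whether $c$ has a neighbor in the interior of $Q_1 \cup Q_2$, yielding a wheel in one case and a theta in the other, contradicting (theta, wheel)-freeness; hence no non-adjacent pair exists in $S \setminus \{c\}$, so $S$ is a clique, and a star cutset that is a clique is a clique cutset.
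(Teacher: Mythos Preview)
Your proposal is correct and follows essentially the same approach as the paper: take a minimal star cutset $S$ centered at $c$, use minimality to guarantee that any two non-adjacent nodes $u,v\in S\setminus\{c\}$ each have neighbors in every component of $G\setminus S$, build chordless $u$--$v$ paths $Q_1,Q_2$ through two distinct components, and observe that $Q_1\cup Q_2\cup\{c\}$ induces a theta (if $c$ has only the two neighbors $u,v$ on the hole $Q_1\cup Q_2$) or a wheel (if $c$ has a third neighbor there). The paper's proof is just a terser version of exactly this argument; most of the degenerate cases you flag (edges between the interiors of $Q_1$ and $Q_2$, $c$ lying on one of the $Q_i$, etc.) are automatically excluded because the interiors live in distinct components of $G\setminus S$ and $c\in S$.
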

\begin{proof}
  Let $S$ be a star cutset centred at $x$, and assume that it is a
  minimal such cutset, i.e.  no proper subset of $S$ is a star cutset
  of $G$ centred at $x$. We now show that $S$ induces a clique. Assume
  not, and let $u$ and $v$ be two nonadjacent nodes of $S$.  Let $C_1$
  and $C_2$ be two of the connected components of $G\setminus S$.  By
  the choice of $S$, both $u$ and $v$ have neighbors in both $C_1$ and
  $C_2$.  So for $i=1,2$, there is a chordless $uv$-path $P_i$ in
  $G[C_i\cup \{ u,v \}]$. But then $P_1\cup P_2\cup x$ induces a theta
  or a wheel with center $x$.
\end{proof}

\begin{lemma}\label{l:Hv}
  Let $G$ be a (theta, wheel)-free graph. If $H$ is a hole of $G$ and
  $v$ a node of $V(G)\setminus V(H)$,  then $v$ has at most two
  neighbors in $H$, and if it has two neighbors in $H$, then they are
  adjacent.
\end{lemma}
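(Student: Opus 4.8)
The plan is to argue by contradiction, distinguishing two bad cases according to how many neighbors $v$ has in $H$, and in each case exhibiting a theta or a wheel. First, suppose $v$ has at least three neighbors in $H$: then $(H,v)$ is a wheel by the definition of a wheel (the rim $H$ is a hole and $v$ has at least three neighbors on it), contradicting that $G$ is wheel-free. So $v$ has at most two neighbors in $H$.

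It remains to rule out the case that $v$ has exactly two neighbors in $H$ that are non-adjacent. Write $H = h_1 h_2 \dots h_k h_1$ and let the two neighbors of $v$ be $h_i$ and $h_j$ with $h_i h_j$ not an edge of $G$. The hole $H$ is split by $h_i$ and $h_j$ into two internally disjoint $h_i h_j$-paths $Q_1$ and $Q_2$ (the two arcs of the cycle), each of length at least $2$ since $h_i$ and $h_j$ are non-adjacent and non-consecutive on $H$. Now consider the three paths $v h_i$, $v h_j$, and... actually the cleanest move is: the node $v$ together with the two arcs $Q_1, Q_2$ forms a theta $3PC(h_i, h_j)$ provided $v$ is adjacent to no node of $H$ other than $h_i$ and $h_j$ — which holds by assumption. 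More precisely, the three internally node-disjoint chordless $h_i h_j$-paths are $Q_1$, $Q_2$, and $h_i v h_j$; each has length at least $2$ (the first two as noted, the third since $h_i h_j$ is a non-edge); and there are no edges between these paths except the ones at $h_i$ and at $h_j$, because the only neighbors of $v$ in $V(H)$ are $h_i, h_j$, and $Q_1, Q_2$ induce (together with the edge we are forbidding) exactly the hole $H$, which is chordless. Hence $G$ contains a theta, contradicting that $G$ is theta-free. This completes the proof.

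The only genuinely delicate point — and the one I would write out carefully — is checking that the three paths $Q_1$, $Q_2$, $h_i v h_j$ really do satisfy \emph{all} the conditions in the definition of a theta: internal node-disjointness is immediate since $V(Q_1)\cap V(Q_2) = \{h_i,h_j\}$ and $v\notin V(H)$; chordlessness of $Q_1$ and $Q_2$ follows because $H$ is a hole; chordlessness of $h_i v h_j$ is the hypothesis that $h_i h_j$ is a non-edge; the length conditions need $h_i,h_j$ to be non-consecutive on $H$, which again is exactly non-adjacency together with $H$ being chordless; and the no-extra-edges condition between the paths is precisely the hypothesis that $v$ has no neighbor in $V(H)\setminus\{h_i,h_j\}$ together with $H$ being chordless. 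There is no real obstacle here — this is a direct unpacking of definitions — but one should be slightly careful that the definition of theta given in the paper requires the paths to be "internally node-disjoint" rather than fully disjoint, which is what makes $h_i v h_j$ an admissible third path sharing the ends $h_i,h_j$ with the other two.

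Alternatively, one can phrase the theta-case using Lemma~\ref{star}: if $v$ has two non-adjacent neighbors $h_i,h_j$ in $H$ then $\{h_i,h_j\}\cup(\text{stuff})$ would give a star cutset, but this is less direct and introduces extra bookkeeping, so I would stick with the explicit theta construction above. The wheel-case is a one-liner and needs no elaboration beyond quoting the definition.
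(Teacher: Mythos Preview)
Your proof is correct and follows exactly the paper's approach: the paper's proof is the two-line version ``$(H,v)$ is a wheel if there are three neighbors; $H\cup\{v\}$ induces a theta if the two neighbors are nonadjacent,'' and you have simply unpacked the verification of the theta axioms in full. The extra care you take is fine but not strictly necessary at this level, and your closing remark about Lemma~\ref{star} is a distraction you can safely drop.
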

\begin{proof}
  Node $v$ has at most two neighbors in $H$, since otherwise $(H,v)$
  is a wheel.  If $v$ has two nonadjacent neighbors in $H$, then
  $H\cup \{ v\}$ induces a theta.
\end{proof}

\section{Only-prism graphs}
\label{sec:onlyprism}

In this section we prove Theorem~\ref{th:decWTPFRee}.

\begin{lemma}
  \label{l:2holesT}
  Let $G$ be an only-prism graph.  Suppose that $G$ contains
  two chordless paths $P = x_P \dots y_P$ and $Q = x_Q \dots z_Q$, of length at
  least 1, node disjoints, with no edges between them.  Suppose $x,
  y, z \notin V(P)\cup V(Q)$ are pairwise adjacent and such that $N(x)
  \cap (V(P) \cup V(Q)) = \{x_P, x_Q\}$, $N(y) \cap (V(P) \cup V(Q)) = \{y_P\}$
  and $N(z) \cap (V(P) \cup V(Q)) = \{z_Q\}$.  Then, $G$ has a clique cutset.
\end{lemma}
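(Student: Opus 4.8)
\noindent\emph{Proof plan.}\quad Assume for contradiction that $G$ has no clique cutset. Since $G$ is wheel-free, Lemma~\ref{diamond} gives that $G$ is diamond-free; since $G$ is (theta, wheel)-free, Lemma~\ref{star} gives that $G$ has no star cutset. We may assume $P$ and $Q$ are chordless (replacing each by a chordless subpath with the same endpoints preserves all hypotheses). By Lemma~\ref{l:duk} the edge $xy$ lies in a unique maximal clique $K$, and since $xyz$ is a triangle, $z\in K$. Every vertex of $V(P)$ is non-adjacent to $z$ and every vertex of $V(Q)$ is non-adjacent to $y$, so $V(P)\cup V(Q)$ is disjoint from $K$. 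If $K$ were not a cutset then $G\sm K$ would be connected, so (as there is no edge between the nonempty sets $V(P)$ and $V(Q)$) there would be a path of $G\sm K$ from $V(P)$ to $V(Q)$. Let $R=r_1\cdots r_k$ be a shortest such path, $a:=r_1\in V(P)$, $b:=r_k\in V(Q)$. Minimality and chordlessness of $R$ give: $k\ge 3$ (as $a\not\sim b$); $V(R)\cap K=\emptyset$, hence $x,y,z\notin V(R)$; $a$ is the only vertex of $R$ in $V(P)$, and $b$ the only one in $V(Q)$; and for every internal vertex $r_i$ of $R$, $N(r_i)\cap V(P)\subseteq\{a\}$ and $N(r_i)\cap V(Q)\subseteq\{b\}$.

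I will derive from this a theta, a wheel, or a pyramid in $G$; since $G$ is only-prism this is impossible, so $K$ must be a cutset, contrary to assumption, and $G$ has a clique cutset. Consider three paths from the triangle $xyz$ to $a$: $T_x$, the edge $xx_P$ followed by the subpath of $P$ from $x_P$ to $a$; $T_y$, the edge $yy_P$ followed by the subpath of $P$ from $y_P$ to $a$; and $T_z$, the edge $zz_Q$, then the subpath of $Q$ from $z_Q$ to $b$, then $R$ traversed backwards from $b$ to $a$. Using the hypotheses on $N(x),N(y),N(z)$, the chordlessness of $P,Q,R$, and the neighbourhood restrictions on internal vertices of $R$, one checks that $T_x,T_y,T_z$ are internally disjoint, $T_z$ has length at least $3$, and at least one of $T_x,T_y$ has length at least $2$ (since $a=x_P=y_P$ is impossible). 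Suppose first that no internal vertex of $R$ has a neighbour in $\{x,y,z\}$ and that $b\ne x_Q$. Then $\{x,y,z\}\cup V(T_x)\cup V(T_y)\cup V(T_z)$ induces a pyramid $3PC(xyz,a)$: the only adjacencies a pyramid would forbid are between the interiors of two of the three paths, between a vertex of $\{x,y,z\}$ and the interior of a path not starting at it, and between $a$ and a vertex of $\{x,y,z\}$ joined to $a$ by a path of length at least $2$; all of these are ruled out by the properties above (the condition $b\ne x_Q$ together with the case hypothesis is exactly what keeps $x$ away from the interior of $T_z$; when $a\in\{x_P,y_P\}$ the corresponding path shrinks to a single edge, which a pyramid permits). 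As two of its paths have length at least $2$, this is a pyramid — a contradiction.

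In the remaining situations we reroute. If some internal vertex of $R$ has a neighbour in $\{x,y,z\}$, take $r_m$ ($2\le m\le k-1$) with $m$ minimum among these, so $r_2,\dots,r_{m-1}$ see nothing in $\{x,y,z\}$; let $w\in\{x,y,z\}$ be adjacent to $r_m$. Since $G$ is diamond-free and $xyz$ is a triangle, $r_m$ is adjacent to exactly one of $x,y,z$. Put $W=w\,r_m\,r_{m-1}\cdots a$; this is chordless unless $w\sim a$, that is unless $(w,a)=(x,x_P)$ or $(w,a)=(y,y_P)$. If $w=z$, then $\{x,y,z\}\cup V(T_x)\cup V(T_y)\cup V(W)$ induces a pyramid $3PC(xyz,a)$ (the same verification, now using $r_m\not\sim x,y$), a contradiction. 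If $w=x$ and $a\ne x_P$ (symmetrically, $w=y$ and $a\ne y_P$), then $T_x$, the path $x\,y\,y_P\dots a$ and $W$ are three internally disjoint chordless paths of length at least $2$ with no further edge between them, hence induce a theta, a contradiction. The leftover cases are $b=x_Q$ (with no internal vertex of $R$ adjacent to $\{x,y,z\}$) and the degeneracies $(w,a)=(x,x_P)$ or $(y,y_P)$; in each of these, building a suitable hole from $R$, $P$, $Q$, $y$ and $z$, the vertex $x$ (or $y$) ends up with three attachments to that hole, yielding a theta when they are spread out and, when they concentrate on a single vertex, a vertex with four neighbours on a hole, i.e.\ a wheel (via Lemma~\ref{l:Hv}) — again a contradiction.

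The hard part is the case analysis of the last paragraph: one has to verify that each configuration exhibited is \emph{induced} (every required non-adjacency traces back to the hypotheses on $N(x),N(y),N(z)$, to $P,Q,R$ being chordless, and to the shortest-path properties of $R$), and one has to chase all coincidences of the ends $a,b$ of $R$ with the special vertices $x_P,y_P,x_Q,z_Q$, where the clean pyramid $3PC(xyz,a)$ degenerates and a theta or a wheel has to be produced in its place.
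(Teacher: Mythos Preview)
Your overall strategy --- take the unique maximal clique $K\supseteq\{x,y,z\}$, assume it is not a cutset, find a shortest $P$--$Q$ connection $R$ outside $K$, and exhibit a forbidden Truemper configuration --- is the same as the paper's. But the execution has a genuine gap in the main case, not only in the ``leftover'' ones.

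You assert that minimality of $R=r_1\cdots r_k$ (with $a=r_1\in V(P)$, $b=r_k\in V(Q)$) forces $N(r_i)\cap V(P)\subseteq\{a\}$ for every internal $r_i$. This fails for $i=2$: if $r_2$ has a second neighbour $p\in V(P)$ with $p\neq a$, then $p\,r_2\cdots r_k$ has the \emph{same} length as $R$, so minimality does not exclude it. (Lemma~\ref{l:Hv} applied to $r_2$ and the hole $x\,x_PPy_P\,y\,x$ only tells you that $p$ and $a$ are adjacent; it does not forbid $p$.) The symmetric issue occurs for $r_{k-1}$ and $V(Q)$. When $r_2$ has such a second neighbour, your ``clean'' pyramid $3PC(xyz,a)$ acquires the extra edge $r_2p$ from $T_z$ into $T_x$ or $T_y$ and is no longer induced. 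This is exactly the configuration the paper handles separately: taking $R$ disjoint from $V(P)\cup V(Q)$ with endnode $u$, it names $u_x,u_y$ as the neighbours of $u$ on the hole through $P$, and disposes of the case $u_xu_y\in E(G)$ by a \emph{different} pyramid (apex $u$, triangle $x,y,z$, third path through $Q$), only \emph{after} having first shown that none of $x,y,z$ has a neighbour in $R$.

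The ``leftover cases'' are also underspecified. For $(w,a)=(x,x_P)$, the minimality of $m$ controls only $r_2,\dots,r_{m-1}$; vertices $r_j$ with $j>m$ may still see $y$ or $z$, so the holes you gesture at need not be chordless, and producing the promised wheel/theta requires a further case split that you have not carried out. The paper avoids this cascade by proving \emph{first}, and in this order, that $x$ has no neighbour on $R$ (via a wheel centred at $x$ and Lemma~\ref{l:Hv}), and then that $y$ and $z$ have none either; once that is secured, the endgame collapses to one pyramid (ruling out $u_xu_y\in E$) and one theta/wheel.
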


\begin{proof}
  By Lemma~\ref{diamond}, we may assume that $G$ is diamond-free.  So,
  by Lemma~\ref{l:duk}, there exists a unique maximal clique $K$ of
  $G$ that contains $x$, $y$ and $z$.  Suppose that $K$ is not a
  clique cutset.  So, $G \setminus K$ contains a shortest path
  $R = u \dots v$ such that $u$ has a neighbor in $P$, and $v$ has a
  neighbor in $Q$.  From the minimality of $R$, $R\setminus u$ has no
  neighbors in $P$ and $R\setminus v$ has no neighbors in $Q$.  We set
  $P_x = x x_P P y_P$, $P_y = y y_P P x_P$, $Q_x = x x_Q Q z_Q$,
  $Q_z = z z_Q Q x_Q$.  Let $u_x$ (resp.\ $u_y$) be the neighbor of
  $u$ in $P_x$ (resp.\ in $P_y$) closest to $x$ (resp.\ to $y$) along
  $P_x$ (resp.\ along $P_y$).  Let $v_x$ (resp.\ $v_z$) be the
  neighbor of $v$ in $Q_x$ (resp.\ in $Q_z$) closest to $x$ (resp.\ to
  $z$) along $Q_x$ (resp.\ along $Q_z$).  By Lemma~\ref{l:Hv} applied
  to $u$ and the hole $x x_P P y_P y x$, either $u_x = u_y$ and
  $u_x \notin \{x, y\}$, or $u_xu_y \in E(G)$ and
  $\{u_x, u_y\} \neq \{x, y\}$.  Similarly, either $v_x = v_z$ and
  $v_x \notin \{x, z\}$, or $v_xv_z \in E(G)$ and
  $\{v_x, v_z\} \neq \{x, z\}$.

  Note that every node of $R$ has at most one neighbor in $\{x, y,
  z\}$ because $G$ is diamond-free and $K$ is maximal.
  Suppose that $x$ has a
  neighbor $r\in R$. Let $Y$ be a shortest path from $r$ to $y$ in $(uRr)
  \cup P_y$, and $Z$ a shortest path from $r$ to $z$ in $(rRv) \cup Q_z$.
  Since $Y \cup Z \cup \{ x \}$ cannot induce a wheel with center $x$, w.l.o.g.
  $y$ has a neighbor in $Z\cap R$. Let $y'$ be such a neighbor closest to $r$.
  Note that $y'\neq r$. Let $H$ be the hole induced by $Y$ and
  $rRy'$. Then $x$ and $H$ contradict Lemma~\ref{l:Hv}.
 So $x$ has no neighbor in
  $R$,  and in particular $x \notin \{u_x, v_x\}$.

  Now let $H$ be the hole induced by $xP_xu_x$, $xQ_x v_x$ and $R$. If $y$ has a neighbor
  $r$ in $R$, then since $x$ is not adjacent to $r$, hole $H$ and node $y$
  contradict Lemma~\ref{l:Hv}. Therefore $y$ has no neighbors in $R$,
  and by symmetry neither does $z$.

  If $u_xu_y \in E(G)$, then the three paths $x Q_x v_x v R u$, $x P_x
  u_x$ and $x y P_y u_y$ form a pyramid, a contradiction.  Therefore,
  as noted above, $u_x = u_y$ and $u_x \notin \{x, y\}$.
  If $u_xx \in E(G)$ then
  $R$, $P_y$ and $v_z Q_z z$ form the rim of a wheel centered
  at $x$.  So, $u_xx \notin E(G)$.  It follows that
  the three paths $u_x P_x x$, $u_x P_y yx$ and $u_x u R v v_x Q_xx$
  form a theta, a contradiction.
\end{proof}

\begin{lemma}
  \label{l:2holes}
  Let $G$ be an only-prism graph.  Suppose that $G$
  contains two chordless paths $P = x_P \dots y_P$ and $Q = x_Q \dots y_Q$, of
  length at least 1, node disjoints, with no edges between them.
  Suppose $x, y \notin V(P)\cup V(Q)$ are adjacent and such that $N(x)
  \cap (V(P) \cup V(Q)) = \{x_P, x_Q\}$ and $N(y) \cap (V(P) \cup V(Q)) = \{y_P,
  y_Q\}$.  Then, $G$ has a clique cutset.
\end{lemma}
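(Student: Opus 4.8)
The plan is to mimic the proof of Lemma~\ref{l:2holesT}, but with the configuration now being a ``theta-like'' attachment ($x$ sees one end of each path, $y$ sees the other end of each path, $x$ adjacent to $y$) rather than the ``pyramid-like'' one. First I would invoke Lemma~\ref{diamond} to assume $G$ is diamond-free, and then Lemma~\ref{l:duk} to get the unique maximal clique $K$ containing the edge $xy$. Suppose for contradiction that $K$ is not a clique cutset of $G$. The key observation is that $P_x := x x_P P y_P y$ together with $Q_x := x x_Q Q y_Q y$ already form a theta-like structure glued along the edge $xy$; in fact $x y_Q Q x_Q x$ is a hole $H_1$ and $x x_P P y_P y x$ is a hole $H_2$. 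Since $K$ is not a clique cutset, there must be a connected subgraph of $G \sm K$ with a neighbor in $V(P)$ and a neighbor in $V(Q)$ (otherwise $K$ together with the fact that $P$ and $Q$ have no edges between them would disconnect things — one has to be slightly careful here and argue that $K$ separates $V(P) \cup \{y_P\text{-side}\}$ from $V(Q)$, using that the only nodes adjacent to both sides are $x$ and $y$, which lie in $K$). So take a shortest path $R = u \dots v$ in $G \sm K$ with $u$ having a neighbor in $P$ and $v$ having a neighbor in $Q$; by minimality no internal node of $R$ has a neighbor in $P \cup Q$, $R \sm u$ misses $P$, and $R \sm v$ misses $Q$.

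Next I would control how $x$ and $y$ attach to $R$. As in Lemma~\ref{l:2holesT}, each node of $R$ has at most one neighbor in $\{x,y\}$ since $G$ is diamond-free and $K$ is maximal (a node of $R$ adjacent to both $x$ and $y$ would extend $K$). Then I would show $x$ has no neighbor in $R$: if $r \in R$ is adjacent to $x$, build a hole through $r$, a piece of $R$, and a sub-path of $Q$ (from $r$ toward $v$ and then into $Q$) avoiding $x$; combined with the path from $x$ through $P$, an application of Lemma~\ref{l:Hv} or the theta/wheel exclusion at $x$ yields a contradiction — essentially the same ``reroute and apply Lemma~\ref{l:Hv}'' trick used in the previous lemma. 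By symmetry (the configuration is symmetric under swapping $x \leftrightarrow y$ and the two ends of each path) $y$ also has no neighbor in $R$. Now let $u_P$ be the neighbor of $u$ on $P$ closest to $x_P$ and $u_P'$ the one closest to $y_P$; by Lemma~\ref{l:Hv} applied to $u$ and the hole $x x_P P y_P y x$, either $u_P = u_P'$ (and it is an internal node of $P$, i.e.\ distinct from $x_P, y_P$ — or possibly equal to one of them, which I'd handle by noting $x, y \notin R$) or $u_P u_P' \in E(G)$; similarly define $v_Q, v_Q'$ on $Q$.

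Finally I would derive the contradiction by exhibiting a forbidden configuration. With $x$ and $y$ having no neighbor on $R$, consider the path from $x$ along $P$ to $u$, the path $R$, and the path from $v$ along $Q$ back to $x$: together with the edge $xy$ and the other portions of $P$ and $Q$ reaching $y$, this should produce either a theta with ends a suitable pair of nodes, or a prism, or — in the degenerate case where $u_P u_P'$ or $v_Q v_Q'$ is an edge — a pyramid; but prism is allowed, so I expect the genuine outcome to be a theta or a wheel. Concretely, if $u$ has two adjacent neighbors $u_P u_P'$ on $P$, the three paths $u_P P x_P x x_Q Q v_Q$ (say), $u_P P y_P y$, and $u_P' u R v \dots$ should be checked to form a prism or pyramid; since prisms are allowed, the main obstacle is bookkeeping the several cases so that in every case the resulting 3-path-configuration is a \emph{theta}, \emph{pyramid}, or \emph{wheel} (all forbidden) rather than a prism (allowed). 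I expect this case analysis on where $R$ attaches to $P$ and $Q$ (single node vs.\ edge, and whether that attachment point is an end of the path or coincides with $x$ or $y$ — the latter already excluded) to be the delicate part, handled exactly as the last paragraph of the proof of Lemma~\ref{l:2holesT}, splitting on ``$u_P = u_P'$'' versus ``$u_P u_P' \in E(G)$'' and symmetrically for $v$, and reading off a theta, pyramid, or wheel in each subcase.
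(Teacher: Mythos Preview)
Your overall architecture matches the paper's: reduce to diamond-free via Lemma~\ref{diamond}, take the unique maximal clique $K\supseteq\{x,y\}$, assume $K$ is not a cutset, pick a path $R=u\dots v$ in $G\setminus K$ from $P$ to $Q$, control the attachments of $u,v$ via Lemma~\ref{l:Hv}, and finish by case analysis. But there is a genuine missing ingredient in your plan.

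The problematic step is ``show $x$ has no neighbor in $R$, by the same reroute-and-apply-Lemma~\ref{l:Hv} trick as in Lemma~\ref{l:2holesT}''. That trick does \emph{not} transfer. In Lemma~\ref{l:2holesT} there are three pairwise adjacent nodes $x,y,z$, so a hole through $r,y,z$ gives three neighbors of $x$ and hence a wheel; the argument there then bootstraps (once $x$ is handled, $y$ and $z$ are easy). Here you only have $x$ and $y$, and the natural hole you would build through $r$ and $y$ (going from $r$ along $R$ to $u$, into $P$ to $y_P$, to $y$, then from $y$ to $y_Q$, along $Q$, back through $v$ and $R$ to $r$) may fail to be a hole precisely when $y$ also has neighbors in $R$. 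Because the configuration is symmetric in $x$ and $y$, you cannot resolve ``$x$ has no neighbor in $R$'' first and then invoke symmetry for $y$; the two claims are entangled.

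The paper breaks this circularity by strengthening the minimality: it chooses $P,Q,R$ \emph{together} minimal subject to all the hypotheses (not just $R$ shortest for fixed $P,Q$). Then if both $x$ and $y$ have neighbors $r,r'$ in the interior of $R$, the subpath $rRr'$ plays the role of a new $Q$ (with $x$ adjacent to one end, $y$ to the other), $P$ stays, and $uRr$ plays the role of a shorter $R$, contradicting the joint minimality. Once at most one of $x,y$ has interior neighbors in $R$, your intended hole argument (with $x$ having two nonadjacent neighbors $y$ and $r$) goes through. Without this joint minimization your plan has no mechanism to handle the ``both $x$ and $y$ meet the interior of $R$'' case, and the sketch you give does not supply one. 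The final case analysis (on whether $u_xu_y$ and $v_xv_y$ are edges) is also a bit more delicate than ``exactly as the last paragraph of Lemma~\ref{l:2holesT}''---the paper has to rule out $u_x=x$, use the length of $Q$ to get $v_yx\notin E(G)$, and in the end produces a pyramid, a theta, or a wheel depending on the subcase---but that part is routine once the attachments of $x,y$ to $R$ are controlled.
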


\begin{proof}
  By Lemma~\ref{diamond}, we may assume that $G$ is diamond-free.
  So,   by Lemma~\ref{l:duk} there exists a unique maximal clique $K$ of $G$ that contains
  $x$ and $y$.  Observe that all common neighbors of $x$ and $y$ are
  in $K$.  Suppose that $K$ is not a clique cutset.  So, $G \setminus
  (H\cup K)$ contains a shortest path $R = u \dots v$ such that $u$
  has a neighbor in $P$, and $v$ has a neighbor in $Q$.  We suppose that $P, Q,
  R$ are minimal w.r.t.\ all the properties above.

  From the minimality of $R$, $R\setminus u$ has no neighbors in~$P$
  and $R\setminus v$ has no neighbors in~$Q$.  We set
  $P_x = x x_P P y_P$, $P_y = y y_P P x_P$, $Q_x = x x_Q Q y_Q$,
  $Q_y = y y_Q Q x_Q$.  Let $u_x$ (resp.\ $u_y$) be the neighbor of
  $u$ in $P_x$ (resp.\ in $P_y$) closest to $x$ (resp.\ to $y$) along
  $P_x$ (resp.\ along $P_y$).  Let $v_x$ (resp.\ $v_y$) be the
  neighbor of $v$ in $Q_x$ (resp.\ in $Q_y$) closest to $x$ (resp.\ to
  $y$) along $Q_x$ (resp.\ along $Q_y$).  By Lemma~\ref{l:Hv} applied
  to $u$ and the hole $x x_P P y_P y x$, either $u_x = u_y$ and
  $u_x \notin \{x, y\}$, or $u_xu_y \in E(G)$ and
  $\{u_x, u_y\} \neq \{x, y\}$.  Similarly, either $v_x = v_y$ and
  $v_x \notin \{x, y\}$, or $v_xv_y \in E(G)$ and
  $\{v_x, v_y\} \neq \{x, y\}$.

  Suppose that both $x$ and $y$ have neighbors in the interior of $R$.
  So, there is a shortest path $R'$ in the interior of $R$ linking a
  neighbor $r$ of $x$ to a neighbor $r'$ of $y$.  Observe that $R'$
  has length at least 1, because every common neighbor of $x$ and $y$ is in
  $K$.  Hence, $P, R', uRr$ contradict the minimality of $P, Q, R$.
  So, we may assume up to symmetry that $y$ has no neighbor in the
  interior of $R$.  If $x$ has a neighbor in the interior of $R$, in
  particular $R$ has length at least~2, so $y P_y u_y u R v v_y Q_y y$
  is a hole, and $x$ has two non-adjacent neighbors in it (namely $y$
  and some internal node of $R$), a contradiction to
  Lemma~\ref{l:Hv}.  Hence, $x$ and $y$ have no neighbors in the
  interior of $R$.

  Suppose that $u_xu_y \in E(G)$.  If $u_x = x$, then $u_y\neq y$ and
  $(u_y u R v v_y Q_y y P_y u_y, x)$ is a wheel, a contradiction.  So,
  $u_x\neq x$. By symmetry it follows that $u$ (resp. $v$) is not
  adjacent to $x$ nor $y$.  Since $Q$ has length at least 1, it is
  impossible that $v_xy\in E(G)$ and $v_yx\in E$, so suppose up to
  symmetry that $v_yx \notin E(G)$.  The three paths
  $y Q_y v_y v R u$, $yx P_x u_x$ and $yP_y u_y$ form a pyramid, a
  contradiction.  Therefore, as noted above, $u_x = u_y$ and
  $u_x \notin \{x, y\}$.  Similarly, $v_x = v_y$ and
  $v_x \notin \{x, y\}$.

  We may assume w.l.o.g.\ that $u_xx\notin E(G)$. If
  $v_xy\not\in E(G)$ then the three paths $xP_xu_x$, $xyP_yu_x$ and
  $xQ_xv_xvRuu_x$ form a theta, a contradiction. So $v_xy\in E(G)$, and
  by symmetry it follows that $u_xy\in E(G)$. But then $P$, $Q$, $R$
  and $\{ x,y \}$ induce a wheel with center $y$, a contradiction.
\end{proof}

\begin{lemma}
  \label{l:HU}
    If $G$ is an only-prism graph, $H$ is a hole in
    $G$, and $x\in V(G) \setminus V(H)$ has a unique neighbor in $V(H)$,
    then $G$ has a clique cutset.
\end{lemma}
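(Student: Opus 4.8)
The plan is to suppose, for contradiction, that $G$ has no clique cutset, and then to produce a theta, a pyramid, or (through Lemma~\ref{l:2holes} or Lemma~\ref{l:2holesT}) a clique cutset after all. Since $G$ is wheel-free, Lemma~\ref{diamond} lets us assume that $G$ is diamond-free. Write $h$ for the unique neighbor of $x$ in $V(H)$, let $h^{-}$ and $h^{+}$ be the two neighbors of $h$ on $H$, and let $P=h^{+}Hh^{-}$; this is a chordless path with $V(P)=V(H)\sm\{h\}$, of length at least~$2$, and its interior $P^{\circ}$ is nonempty because $H$ has length at least~$4$.

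First I would carry out two reductions. Since $h$ is adjacent to every other vertex of $N[h]$, the set $N[h]$ is a star cutset whenever it disconnects $G$; but a graph with no clique cutset has no star cutset by Lemma~\ref{star}, so $G\sm N[h]$ is connected, and it contains $P^{\circ}\ne\emptyset$. Next, if $N(x)\subseteq N[h]$ then $N(x)\sm\{h\}\subseteq N(x)\cap N(h)$, and $N(x)\cap N(h)$ is a clique (two nonadjacent common neighbors of the edge $xh$ would form a diamond), so $\{h\}\cup\big(N(x)\cap N(h)\big)$ is a clique that contains $N(x)$ and separates $x$ from $h^{+}$ --- a clique cutset, contradiction. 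Hence $x$ has a neighbor outside $N[h]$, so $G\sm N[h]$ contains a path from a neighbor of $x$ to $P^{\circ}$. I would then fix $R=u\dots v$ to be a shortest path in $G\sm N[h]$ with $u\in N(x)$, $V(R)\cap V(H)=\emptyset$, and $v$ having a neighbor in $P^{\circ}$. Then $R$ is chordless; no vertex of $R$ is adjacent to $h$ (as $V(R)\cap N[h]=\emptyset$); by the choice of $R$, $x$ has no neighbor on $R$ other than $u$, and $v$ is the only vertex of $R$ with a neighbor in $P^{\circ}$, while $u$ has no neighbor in $P^{\circ}$ unless $u=v$. By Lemma~\ref{l:Hv}, applied to $H$ and to $v$ (and to $u$), the neighbors of $v$ in $H$, and those of $u$, form either a single vertex or an edge of $P$.

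The core of the proof is a case analysis on how $v$ and $u$ attach to $H$. In the generic case, where neither $u$ nor any internal vertex of $R$ has a neighbor in $V(P)$ and all neighbors of $v$ in $H$ lie in $P^{\circ}$: if $v$ has a unique neighbor $p$ in $H$, then $pPh^{+}$ extended by the edge $h^{+}h$, the path $pPh^{-}$ extended by the edge $h^{-}h$, and the path that runs $p,v$, then along $R$ from $v$ to $u$, then $u,x,h$, are three internally disjoint chordless $p$-to-$h$ paths, pairwise nonadjacent except at their ends, each of length at least~$2$, with $p\not\sim h$; this is a theta, contradicting that $G$ is only-prism. If instead $v$ has two consecutive neighbors $p,p'$ in $P^{\circ}$, then the three analogous paths, now emanating from the triangle $\{v,p,p'\}$ and all ending at $h$, form a pyramid, again a contradiction. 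So everything reduces to the boundary cases in which $v$, $u$, or some internal vertex of $R$ has a neighbor in $\{h^{+},h^{-}\}$: there one re-runs the same theta/pyramid constructions with $h^{+}$ or $h^{-}$ playing the role of a path-end or apex, and in the cases where a triangle sitting on an edge of $H$ appears, one verifies the hypotheses of Lemma~\ref{l:2holes} or Lemma~\ref{l:2holesT} and concludes directly that $G$ has a clique cutset.

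I expect the boundary cases to be the main obstacle. The minimality of $R$ controls how $R$ meets $P^{\circ}$ and $N[h]$, but it says nothing about how $R$ meets the closed neighborhoods of the two ends $h^{+}$ and $h^{-}$ of $P$, so these cases must be separated out by hand; it is likely that some are handled most smoothly by feeding the configuration into Lemma~\ref{l:2holes} or Lemma~\ref{l:2holesT} rather than by exhibiting a Truemper configuration directly.
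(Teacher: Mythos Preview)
Your plan is sound and the generic case is correct, but the route differs from the paper's in one essential way. The paper does not push the connecting path outside $N[h]$; instead it takes a path $P=u\dots v$ in $G\setminus(H\cup\{x\})$ with $u\in N(x)$ and $v$ having a neighbor in $H\setminus y$, and minimises over \emph{all} triples $(H,x,P)$, i.e.\ it lets the pendant vertex $x$ change. That global minimality is the whole trick: if some $v'\in P$ is adjacent to $y$, then either $v'\ne v$ and one swaps to the strictly shorter triple $(H,v',v'Pv)$, or $v'=v$ and one gets a diamond (if $u=v$) or feeds the picture to Lemma~\ref{l:2holesT}. After that, no vertex of $P$ sees $y$, and the endgame is exactly your theta / pyramid / Lemma~\ref{l:2holes} split according to whether $v$ has one neighbor in $H$, two adjacent neighbors, or a unique neighbor adjacent to $y$.

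Your version trades the ``$P$ meets $N(y)$'' case for the ``$R$ meets $N(h^+)\cup N(h^-)$'' boundary cases. These do go through, but you should organise them as follows rather than leave them open: let $r$ be the vertex of $R$ closest to $u$ with a neighbor in $\{h^+,h^-\}$ (if any), say $r\sim h^+$. By Lemma~\ref{l:Hv} and the minimality of $R$, vertices of $uRr$ strictly before $r$ have no neighbor in $H$, and $r\not\sim h^-$. If $r\ne v$, then $r$ has $h^+$ as its unique neighbor in $H$, and Lemma~\ref{l:2holes} applies directly with the adjacent pair $h,h^+$ and the two paths $x\,(uRr)$ and $h^-Pp_1$ (where $p_1$ is the neighbor of $h^+$ in $P^\circ$). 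If $r=v$, then all of $R\setminus v$ is anticomplete to $H$, $v$ sees $h^+$ and $p_1$, and the three paths $v(Ru)xh$, $h^+h$, $p_1Ph^-h$ from the triangle $\{v,h^+,p_1\}$ form a pyramid. So in your setup Lemma~\ref{l:2holesT} is never needed. The paper's minimisation over triples is what makes its write-up shorter; your approach is correct but costs an extra layer of case analysis at the ends of $P$.
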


\begin{proof}
  Let $y$ be the unique neighbor of $x$ in $H$.  If $y$ is not a
  cutnode of $G$, then some path $P = u \dots v$ of $G \setminus (H
  \cup \{x\})$ is such that  $u$
  is adjacent to $x$, and $v$  has a neighbor in $H\setminus y$.  We
  suppose that $H, x, P$ are minimal subject to all the properties
  above.

  Suppose that some node $v'$ of $P$ is adjacent to $y$.  If
  $v'\neq v$, then by the minimality of $P$, $v'$ has a unique
  neighbor in $H$, so $H, v', v'Pv$ contradicts the minimality of
  $H, x, P$.  So, $v'=v$ and by Lemma~\ref{l:Hv}, $v'$ is adjacent to a neighbor $z$ of $y$ in
  $H$.  If $u=v$, then $\{x, y, z, u\}$ induces a diamond, so $G$ has
  a clique cutset by Lemma~\ref{diamond}.  If $u\neq v$, then by
  Lemma~\ref{l:2holesT}, $G$ has a clique cutset.  Hence, we may
  assume that no node of $P$ is adjacent to $y$.

If $v$ has two adjacent neighbors in $H$, then $x$, $P$ and $H$ form a
pyramid.  So, by Lemma~\ref{l:Hv}, $v$ has a unique neighbor in $H$.
If this neighbor is not adjacent to $y$, then $x$, $P$ and $H$ form a
theta.  Otherwise, $G$ has a clique cutset by Lemma~\ref{l:2holes}.
\end{proof}

\noindent
{\em Proof of Theorem~\ref{th:decWTPFRee}:}
Assume $G$ has no clique cutset. Then by Lemma~\ref{diamond}, $G$ does
not contain a diamond and by Lemma~\ref{l:lgtfc}, we may assume that $G$ contains a claw
$\{v, x, y, z\}$ centered at $v$.  Since $v$ cannot be a cut node,
there exists a path $P$ in $G\setminus v$ whose endnodes are distinct
nodes of $\{x, y, z\}$. We assume that $x, y, z, P$ are chosen
subject to the minimality of $P$.  W.l.o.g. $P$ is a path from $x$ to
$y$, and by the minimality of $P$, it does not go through $z$.

Suppose that no internal node of $P$ is adjacent to $v$. Then
$P\cup \{ v\}$ induces a hole $H$. By Lemma~\ref{l:Hv}, $v$ is the
unique neighbor of $z$ in $H$. But this contradicts
Lemma~\ref{l:HU}. Therefore an internal node of $P$ is adjacent to~$v$.

 Let $v'$ be any internal node of $P$ that is adjacent to $v$.
 We now show that $N(v')\cap \{ x,y,z \} =\{ z\}$. Since $G$ does not contain a diamond,
 w.l.o.g. $v'$ is not adjacent to $x$. If $z$ is not adjacent to $v'$, then $x,v',z$ and $xPv'$
 contradict our choice of $x,y,z$ and $P$. So $z$ is adjacent to $v'$.
 Since $\{ v,y,v',z\}$ does not induce a diamond, it follows that $v'$
 is not adjacent to $y$.  So, as claimed $N(v') \cap \{x, y, z\} =
 \{z\}$. Now, $\{v, x, y, v'\}$ is a claw centered at $v$ and the path
 $xPv'$  contradicts the minimality of $x, y, z$ and $P$. \hfill
$\Box$

\section{Only-pyramid graphs}
\label{sec:onlypyramid}

In this section we prove Theorem~\ref{ptwfree}.  The proof mostly
relies on previously proved theorems and some terminology is needed to
state them.

We say that a clique is {\it big} if it is of size at least
3.  Let $L$ be the line graph of a tree. By Theorem~\ref{th:HararyH} and
Lemma~\ref{l:duk}, every edge
of $L$ belongs to exactly one maximal clique, and every node of $L$
belongs to at most two maximal cliques. The nodes of $L$ that belong
to exactly one maximal clique are called {\em leaf nodes}. In the graph obtained
from $L$ by removing all edges in big cliques, the connected
components are chordless paths (possibly of length 0). Such a path is
an {\em internal segment} if it has its endnodes in distinct big
cliques (when $P$ is of length 0, it is called an internal segment
when the node of $P$ belongs to two big cliques). The other paths $P$
are called {\em leaf segments}. Note that one of the endnodes of a
leaf segment is a leaf node.

A {\em nontrivial basic pyramid graph} $R$ is defined as follows: $R$
contains two adjacent nodes $x$ and $y$, called the {\em special
  nodes}. The graph $L$ induced by $R\setminus \{ x,y\}$ is the line
graph of a tree and contains at least two big cliques.  In $R$, each
leaf node of $L$ is adjacent to exactly one of the two special
nodes, and no other node of $L$ is adjacent to the special
nodes. Furthermore, no two leaf segments of $L$ with leaf nodes
adjacent to the same special node have their other endnode in the same
big clique (this is referred to in the rest of the section as the
\emph{uniqueness condition}). The {\em internal segments} of $R$ are the internal
segments of $L$, and the {\em leaf segments} of $R$ are the leaf
segments of $L$ together with the node in $\{ x,y\}$ to which the leaf
segment is adjacent to.  $R$ is {\em long} if all the leaf segments
are of length greater than 1.

An {\em extended nontrivial basic pyramid graph} is any graph $R^*$
obtained from a nontrivial basic pyramid graph $R$ with special nodes
$x$ and $y$ by adding nodes $u_1, \dots, u_k$ satisfying the
following: for every $i=1, \dots, k$, there exists a big clique $K_i$
of $R$ and some $z_i\in \{ x,y\}$ such that
$N(u_i)\cap V(R)=V(K_i)\cup \{ z_i\}$.  Note that $u_i$ is the
center of a wheel of $R$.

A wheel $(H,x)$ is an {\em even wheel} if $x$ has an even number of
neighbors on~$H$.  A node cutset $S$ of a graph $G$ is a {\em
  bisimplicial cutset} if for some $x\in S$,
$S\subseteq N(x)\cup\{x\}$ and $S\setminus \{ x\}$ is a disjoint union
of two cliques.

\begin{theorem}\label{dehf}
{\em (Kloks, M\"uller, Vu\v{s}kovi\'c \cite{kmv:evenhole})}
A connected (diamond, 4-hole, prism, theta, even wheel)-free graph is either
one of the following graphs:
\begin{itemize}
\item a clique,
\item a hole,
\item a long pyramid, or
\item an extended
nontrivial basic pyramid graph,
\end{itemize}
or it has a bisimplicial cutset or a 2-join.
\end{theorem}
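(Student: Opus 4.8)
The plan is to prove the dichotomy in contrapositive form: assume $G$ is connected, (diamond, 4-hole, prism, theta, even wheel)-free, and has neither a bisimplicial cutset nor a 2-join, and deduce that $G$ is a clique, a hole, a long pyramid, or an extended nontrivial basic pyramid graph. Throughout, diamond-freeness is the main lever: by Lemma~\ref{l:duk} every edge lies in a unique maximal clique, so the big cliques and the internal and leaf segments defined in this section are well defined, and any attachment to a big clique is an attachment to a sub-clique. I would first record that the target graphs do lie in the class (a routine check, analogous to Lemma~\ref{l:opinC}), and dispose of the cases where $G$ is a clique or a hole. The remaining work then splits according to whether $G$ contains a triangle.

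Second, the preliminary lemmas must be upgraded to the present setting, since here odd wheels are allowed and so Lemmas~\ref{l:Hv} and~\ref{star}, which assume wheel-freeness, do not apply verbatim. The replacements I would prove are: for a hole $H$ and a vertex $v\notin V(H)$ with at least three neighbours on $H$, the pair $(H,v)$ is an odd wheel (an even wheel being excluded by hypothesis), and $v$ together with its neighbours on $H$ tends to form a star cutset centred at $v$; a vertex with exactly two neighbours on $H$ has them adjacent (else a theta, as in Lemma~\ref{l:Hv}); and a vertex with a single neighbour behaves as in Lemma~\ref{l:HU}. Together with theta- and prism-freeness, these local facts force the only $3$-path-configurations present to be pyramids and the only wheels present to be odd.

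Third, in the case where $G$ contains a triangle, the structural target is a line-graph-of-a-tree backbone (recognised via Theorem~\ref{th:HararyH} from diamond- and claw-control) decorated with the two special nodes $x,y$ and the wheel-centre vertices $u_i$. I would run an extremal argument: take a maximal induced subgraph $B$ of $G$ that is an extended nontrivial basic pyramid graph, or, if none exists yet, a maximal (claw, diamond)-free line graph of a tree inside $G$, and analyse how a component of $G\setminus B$ attaches to $B$. Using the upgraded attachment facts, each attachment either extends $B$ by a new internal segment, a new leaf segment, or a new centre $u_i$ --- contradicting maximality --- or it is \emph{non-local}, in which case I would show that the natural partition of $V(G)$ satisfies all the 2-join axioms (special-set completeness, $|X_i|\geq 3$, and the path and non-path conditions), or that the separating set at a big clique or at $\{x,y\}$ is bisimplicial. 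Either outcome contradicts the hypotheses, so $G=B$ is basic (reducing to a long pyramid when the decoration is empty).

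Finally, I expect the main obstacle to be the treatment of odd wheels, and in particular reconciling them with the bisimplicial-cutset conclusion. An odd wheel $(H,v)$ immediately suggests a star cutset centred at $v$, but a star cutset need not be bisimplicial, since the centre's neighbours on the rim may fail to be a union of two cliques; the real content of the theorem is to show that in a graph with no bisimplicial cutset and no 2-join, every odd wheel must sit inside an extended nontrivial basic pyramid graph, with its centre playing the role of some $u_i$ and its rim threading through two big cliques. This demands careful parity bookkeeping --- every candidate ``bad'' configuration must be shown to create a hole whose length, combined with an existing path of the structure, is even and hence yields a $4$-hole, a theta, a prism, or an even wheel --- while never mistaking a legitimate centre $u_i$ for such an obstruction; and it demands the global extraction of a genuine 2-join from any two ends of the structure that the rest of the graph fails to bridge. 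This combination of parity control with verified decomposition extraction is the technical heart of the result, and is exactly the content of~\cite{kmv:evenhole}.
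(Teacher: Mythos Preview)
This theorem is not proved in the paper: it is quoted verbatim from Kloks, M\"uller and Vu\v{s}kovi\'c~\cite{kmv:evenhole} and used as a black box in the derivation of Theorem~\ref{ptwfree}. There is therefore no ``paper's own proof'' to compare your proposal against; the present paper only \emph{applies} Theorem~\ref{dehf}, combining it with Lemmas~\ref{diamond}, \ref{4hole} and~\ref{star} to reduce the only-pyramid case to the wheel-free instance of the extended nontrivial basic pyramid outcome.

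Your sketch is a reasonable high-level outline of how a proof of Theorem~\ref{dehf} might proceed, and you correctly identify at the end that the technical content lives entirely in~\cite{kmv:evenhole}. But as a self-contained proof it is far from complete: the ``upgraded preliminary lemmas'' you describe (odd-wheel attachments forcing bisimplicial rather than merely star cutsets, parity bookkeeping ruling out even holes) are precisely the hard part, and your proposal states them as goals rather than arguments. The maximal-basic-subgraph extremal scheme you outline is plausible in spirit but the actual proof in~\cite{kmv:evenhole} is long and handles many attachment cases that your sketch does not enumerate. If the intent was to supply a proof where the paper gives none, what you have written is a roadmap, not a proof; if the intent was to reproduce the paper's treatment, the correct answer is simply to cite~\cite{kmv:evenhole}.
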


\begin{lemma}\label{4hole}
If $G$ is a connected only-pyramid graph that contains a 4-hole, then either $G$
is a 4-hole or it has a clique cutset.
\end{lemma}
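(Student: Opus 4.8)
Let $G$ be a connected only-pyramid graph containing a $4$-hole $H = h_1h_2h_3h_4h_1$. The goal is to show that either $G$ is exactly this $4$-hole or $G$ has a clique cutset. The natural strategy is to first understand how an arbitrary vertex $v \notin V(H)$ attaches to $H$, using the fact that $G$ is (theta, wheel)-free (so Lemma~\ref{l:Hv} applies) combined with the extra knowledge that $G$ has no prism either, and then to build up a clique cutset by taking a maximal ``diamond-like'' structure around $H$. If $G$ contains a diamond, Lemma~\ref{diamond} immediately gives a clique cutset, so I would assume $G$ is diamond-free. Then by Lemma~\ref{l:Hv}, every $v \notin V(H)$ has either $0$, $1$, or $2$ neighbors in $H$, and in the two-neighbor case the two neighbors are adjacent, i.e.\ form an edge of $H$.

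First I would handle the vertices with exactly one neighbor in $H$: by Lemma~\ref{l:HU}, any such vertex already forces a clique cutset, so we may assume no vertex of $V(G) \setminus V(H)$ has a unique neighbor in $H$. Hence every vertex outside $H$ has either no neighbor in $H$ or exactly two adjacent neighbors in $H$ — a vertex ``attached to an edge'' of $H$. Next, observe that since $G$ is diamond-free, for a fixed edge $e = h_ih_{i+1}$ of $H$ the set of vertices attached to $e$, together with $h_i$ and $h_{i+1}$, cannot contain two non-adjacent vertices both attached to $e$ — more precisely, I would argue (using diamond-freeness) that the vertices attached to a given edge $e$, together with that edge, behave like an anticomplete-to-$H$-minus-$e$ clique-ish gadget. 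The crucial step is a maximality argument in the spirit of Lemma~\ref{diamond}: pick an edge $e = uv$ of $H$ and let $K$ be the (unique, by Lemma~\ref{l:duk}) maximal clique containing $e$; then show that $K$ is a clique cutset separating the two ``sides'' of $H$, namely the neighbor of $u$ on $H$ other than $v$ from the neighbor of $v$ on $H$ other than $u$. If $K$ is not a cutset, there is a path $R$ in $G \setminus K$ from a vertex with a neighbor in $H\setminus K$ on one side to a vertex with a neighbor on the other side, and I would derive a theta, prism, pyramid, or wheel from $R$ together with the two arcs of $H$ and the appropriate attachment vertices — this is exactly the kind of configuration already analyzed in Lemmas~\ref{l:2holes} and~\ref{l:2holesT}, so I expect to be able to invoke those lemmas (or their proof technique) directly, since the two arcs of $H$ plus a vertex attached to each end play the role of the paths $P, Q$ and the vertices $x, y$ there.

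The remaining case is when no such separating clique cut exists because $G$ has very few vertices beyond $H$ — concretely, if every vertex outside $H$ is attached to an edge of $H$ and the maximality argument never produces a cutset, I would show that $G$ must be forced down to $H$ itself, possibly after noting that a vertex attached to an edge $h_1h_2$ together with $h_1, h_2$ and a vertex attached to the opposite edge $h_3h_4$ would create a prism (two triangles joined by the two arcs $h_2h_3$ and $h_1h_4$ of $H$), which is excluded; this kills most configurations and leaves only $G = H$.

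\medskip

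\textbf{Main obstacle.} The delicate point is the exhaustive case analysis for the path $R$ in the non-cutset case: one must track which side of $H$ the ends of $R$ attach to, whether the attachment vertices coincide with or are adjacent to $h_i$'s, and whether interior vertices of $R$ see the clique $K$ — this is precisely where Lemmas~\ref{l:2holes}, \ref{l:2holesT}, and~\ref{l:HU} were designed to be applied, and the real work is setting up the hypotheses of those lemmas correctly (identifying the paths $P, Q$ as the two arcs of $H$, and the vertices $x, y$ or $x, y, z$ as suitable attachment vertices) rather than re-running the wheel/theta/pyramid arguments from scratch. A secondary subtlety is making sure the special short-cycle behavior of a $4$-hole (every pair of opposite edges, arcs of length $1$) does not create a prism or pyramid that must be separately excluded; handling the ``$G = H$'' base case cleanly is where the ``$4$-hole'' hypothesis, as opposed to a longer hole, actually matters.
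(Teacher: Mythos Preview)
There is a genuine gap: Lemmas~\ref{l:2holesT}, \ref{l:2holes}, and~\ref{l:HU} are all stated and proved for \emph{only-prism} graphs, i.e.\ graphs that are (theta, wheel, pyramid)-free. The lemma you are trying to prove concerns \emph{only-pyramid} graphs, which are (theta, wheel, prism)-free. These are different classes, and the proofs of those three lemmas genuinely use pyramid-freeness (each ends a case with ``\dots\ form a pyramid, a contradiction''). In an only-pyramid graph, pyramids are allowed, so you cannot invoke those lemmas. In fact Lemma~\ref{l:HU} actually fails for only-pyramid graphs in general: in a long pyramid, the neighbor of the apex on the third path has a unique neighbor on the hole formed by the other two paths, yet a long pyramid has no clique cutset. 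The only preliminary lemmas that transfer are Lemma~\ref{diamond} (wheel-free) and Lemma~\ref{l:Hv} ((theta, wheel)-free), and these alone do not carry your plan.

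The paper's argument is both shorter and class-appropriate: instead of looking at individual vertices and maximal cliques, it looks at a connected component $C$ of $G\setminus H$ and shows directly that $N(C)\cap V(H)$ is contained in an edge of $H$. If two opposite vertices $x_1,x_3$ of the $4$-hole both had neighbors in $C$, take a minimal chordless $x_1$--$x_3$ path $P$ through $C$; then analyze the attachments of $x_2$ and $x_4$ to $P$. If both attach, wheel-freeness and minimality force them to attach to distinct endnodes of $P$ with unique neighbors, and $H\cup P$ is a prism --- here the prism exclusion (available in only-pyramid graphs) is used. If at most one of $x_2,x_4$ attaches, $H\cup P$ contains a theta or a wheel. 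Hence each component attaches to an edge of $H$, and that edge is a clique cutset. Your maximal-clique-plus-path-$R$ machinery is unnecessary, and the $4$-hole hypothesis is exploited simply through the fact that any two nonadjacent vertices of $H$ are already ``opposite''.
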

\begin{proof}
  Let $H=x_1x_2x_3x_4x_1$ be a 4-hole of $G$, and assume that
  $G\neq H$.  Let $C$ be a connected component of $G\setminus H$.
  Suppose that two nonadjacent nodes of $H$, say $x_1$ and $x_3$, both
  have a neighbor in $C$. Let $P$ be a path in $C$ such that $x_1Px_3$
  is a chordless path. W.l.o.g. we may assume that $P$ is minimal such
  path.  Suppose that both $x_2$ and $x_4$ have a neighbor in
  $P$. Then, by the choice of $P$ and since $G$ is wheel-free, $P$ is
  of length at least 1, $x_2$ and $x_4$ are adjacent to different
  endnodes of $P$, and they each have a unique neighbor in $P$. But
  then $P\cup H$ induces a prism.  So w.l.o.g. $x_2$ does not have a
  neighbor in $P$. But then $P\cup H$ induces a theta or a wheel.
  Therefore, for some edge $uv$ of $H$, $N(C)\cap H=\{ u,v\}$, and so
  $G$ has a clique cutset.
\end{proof}

\medskip

\noindent
{\em Proof of Theorem~\ref{ptwfree}:}
Let $G$ be an only-pyramid graph that does not have a clique cutset
and is not a hole.  By Lemmas~\ref{diamond} and~\ref{4hole}, $G$ is
(diamond, 4-hole)-free.  Since a bisimplicial cutset is a star cutset,
by Theorem~\ref{dehf} and Lemma \ref{star}, it is enough to show that
if $G$ is an extended nontrivial basic pyramid graph, then it is
pyramid-basic.

As noted above, if a wheel-free graph $G$ is
an extended nontrivial basic pyramid graph, then it is a long
nontrivial basic pyramid graph.  So, $G$ is obtained from the
line graph of a tree $T$ by adding two nodes $x$ and $y$ as
explained above.

Let us check that $T$ is safe.  First, note that by the uniqueness
condition in the definition of nontrivial basic pyramid graphs, it
cannot be that more than two leaf segments have the non-leaf end in
the same big clique.  This means that in $T$,
there does not exists three pendant edges that are siblings, every
pendant edge of $T$ has at most one sibling.  To check that $T$ is
safe, it remains to check that for every node $u$ of degree 1, the
neighbor $v$ of $u$ has degree at most 2.  So, suppose for a
contradiction that $T$ contains a node $u$ of degree 1 whose
neighbor $v$ has degree at least~3.  So, the edge $uv$ is a node $c$
of $L(T)$. Node $c$ is a leaf node of $L(T)$, so it must be adjacent
to $x$ or $y$, say to $x$.  Also, $c$ is adjacent to two nodes $a$,
$b$ of some big clique of $G$.  Since edge $c$ of $T$ has at most one
sibling, we may assume up to symmetry that $a$ is the end of an
internal segment of $L(T)$.  So, there are two node-disjoint paths
$P= a\dots x$ and $Q=y\dots b$ in $L(T)$: $P$ starts by the internal
segment ending at $a$, reaches another big clique, and then any leaf
segment in that part of the tree with an end $x$, while $Q$ starts from the segment
ending at $b$ (if it is a leaf segment, it is linked to $y$ by the
uniqueness condition, otherwise, it can be linked to $x$ or $y$, and
we choose $y$). The union of $P$ and $Q$ forms a hole, and $c$ has
three neighbors in that hole, namely $a$, $b$ and $x$.  This proves
that $T$ is safe.

Now, the uniqueness condition shows that $G$ is in fact a pyramid-basic
graph.  \hfill $\Box$

\section{2-joins}
\label{sec:2joins}

In this section, we describe more closely the structure of the 2-joins
and the almost 2-joins that actually occur in our classes of graphs.
An almost 2-join with a split $(X_1, X_2, A_1, A_2, B_1, B_2)$ in a
graph $G$ is \emph{consistent} if the following statements hold for
$i=1, 2$:

\begin{enumerate}
\item\label{i1} Every component of $G[X_i]$ meets both $A_i$, $B_i$.
\item\label{i3} Every node of $A_i$ has a non-neighbor in $B_i$.
\item\label{i4} Every node of $B_i$ has a non-neighbor in $A_i$.
\item\label{i6} Either both $A_1$, $A_2$ are cliques, or one of $A_1$ or $A_2$ is
  a single node, and the other one is a disjoint union of cliques.
\item\label{i7} Either both $B_1$, $B_2$ are cliques, or one of $B_1$, $B_2$ is
  a single node, and the other one is a disjoint union of cliques.
\item\label{i8} $G[X_i]$ is connected.
\item\label{i10} For every node $v$  in $X_i$, there exists a path in $G[X_i]$
  from $v$ to some node of $B_i$ with no internal node in $A_i$.
\item\label{i9} For every node $v$  in $X_i$, there exists a path in $G[X_i]$
  from $v$ to some node of $A_i$ with no internal node in $B_i$.
\end{enumerate}

Note that the definition contains redundant statements (for instance,~\ref{i8}
implies~\ref{i1}), but it is convenient to list properties separately as above.

\begin{lemma}
  \label{l:consistent}
  If $G$ is a (theta, wheel)-free graph with no clique cutset, then
  every almost 2-join of $G$ is consistent.
\end{lemma}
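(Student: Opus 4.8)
The plan is to verify each of the eight conditions \ref{i1}--\ref{i9} in the definition of consistent, using that $G$ is (theta, wheel)-free and has no clique cutset, together with the preliminary lemmas. I would first reduce the work by observing (as the paper notes) that several conditions are consequences of others, so it suffices to establish \ref{i8}, \ref{i3}, \ref{i4}, \ref{i6}, \ref{i7}, \ref{i10}, \ref{i9}, since \ref{i8} implies \ref{i1}.

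The easy steps come first. For \ref{i8}: if some $G[X_i]$ were disconnected, say $X_1 = Y \cup Z$ with no edges between $Y$ and $Z$, then since $A_1$ and $B_1$ are nonempty, one of $Y, Z$ together with $X_2$ would be disconnected from the rest — more precisely, one of $A_2$, $B_2$ serves as a cutset separating the appropriate pieces; one checks $A_2 \cup B_2$ or a sub-part is a cutset, but to get a \emph{clique} cutset one uses \ref{i6}, \ref{i7} (which must therefore be proved first or simultaneously). Actually the cleanest order is: first prove \ref{i6} and \ref{i7}, then use them to handle connectivity and the rest. For \ref{i6}: suppose $A_1$ is not a clique, with nonadjacent $a, a' \in A_1$; since $|A_2| \geq 1$, pick $\alpha \in A_2$; now if $A_2$ has two nodes $\alpha \neq \alpha'$, then $\{a, a', \alpha, \alpha'\}$ — wait, $a a'$ is a non-edge and $\alpha\alpha'$ may or may not be an edge, so one gets a $4$-hole or a path; to force the structure one argues that if $A_1$ is not a clique then $A_2$ must be a single node (otherwise two nonadjacent nodes on each side together with a connecting path through $X_2$ or directly give a theta/wheel centered appropriately — using that $G[X_2]$ contains an $A_2$-$B_2$ path), and then $A_1$ being a disjoint union of cliques follows because a diamond or larger non-clique-non-disjoint-union structure in $A_1$, all complete to the single node of $A_2$, plus a path on the $X_2$ side, yields a wheel. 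This is where I expect the bookkeeping to be heaviest.

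The conditions \ref{i3} and \ref{i4} are symmetric: if some $a \in A_i$ were complete to $B_i$, then (combined with $a$ being complete to $A_{3-i}$) one shows $a$ together with the appropriate special sets forms a cutset; to upgrade it to a star or clique cutset one invokes \ref{i6}/\ref{i7} and Lemma~\ref{star}. For \ref{i10} and \ref{i9}: these say every node of $X_i$ can reach $B_i$ avoiding $A_i$ internally (and dually). If not, then the set of nodes reachable from $v$ without passing through $A_i$ internally, intersected appropriately, shows that $A_i$ (or a clique sub-part of it, using \ref{i6}) separates $v$ from $B_i$ and from $X_{3-i}$ — giving a clique cutset after invoking Lemma~\ref{star} or directly Lemma~\ref{diamond}/the structure of $A_i$.

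The main obstacle will be the simultaneous handling of \ref{i6} and \ref{i7} together with connectivity: one cannot cleanly prove ``$A_1$ is a disjoint union of cliques'' in isolation, because the theta/wheel one wants to produce needs a path on the $X_2$ side whose existence relies on $G[X_2]$ being connected with an $A_2$-$B_2$ path, which in turn (to even make sense of ``$A_2$ a single node'') needs part of \ref{i6}. I would resolve this by first proving a combined statement: assuming no clique cutset, for each $i$ either $A_i$ is a clique or $|A_{3-i}| = 1$ and $A_i$ is a disjoint union of cliques, and simultaneously that $G[X_i]$ is connected — using a minimal counterexample / choosing a shortest connecting path, and applying Lemmas~\ref{l:Hv}, \ref{diamond} and \ref{star} to kill the resulting theta, wheel, or diamond configurations. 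Once \ref{i6}, \ref{i7}, \ref{i8} are in hand, the remaining conditions \ref{i3}, \ref{i4}, \ref{i10}, \ref{i9} each follow by a short argument producing a star cutset and invoking Lemma~\ref{star}.
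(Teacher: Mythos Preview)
Your plan has the right ingredients --- Lemma~\ref{diamond} (diamond-free), Lemma~\ref{star} (no star cutset), and producing thetas/wheels from paths --- but the order you propose creates the very circularity you then struggle to untangle.  The paper's key move, which you miss, is to \emph{prove \ref{i1} first and directly}, rather than deriving it later from \ref{i8}.  Once you observe via Lemma~\ref{star} that $G$ has no star cutset, condition~\ref{i1} follows by a short case analysis using only star cutsets (e.g.\ if a component $C$ of $G[X_1]$ misses $B_1$ and $C\not\subseteq A_1$, then $\{u\}\cup A_1$ with $u\in A_2$ is a star cutset centred at $u$).  No knowledge of \ref{i6} or \ref{i7} is needed.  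Similarly \ref{i3} and \ref{i4} follow from pure star-cutset arguments.

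With \ref{i1} and \ref{i3} in hand, the proof of \ref{i6} becomes clean: diamond-freeness forces that if $|A_1|,|A_2|\ge 2$ and one side is not a clique then both are stable sets, and theta-freeness ($K_{2,3}$) forces $|A_1|=|A_2|=2$; now \ref{i1} supplies the $A_i$--$B_i$ paths you were worried about, and a short configuration analysis yields a prism/theta/wheel.  Then \ref{i8} follows from \ref{i1}, \ref{i6}, \ref{i7}, \ref{i3}, and finally \ref{i10}, \ref{i9} from star cutsets and \ref{i6}.  Your proposed ``combined statement'' proving \ref{i6}, \ref{i7}, \ref{i8} simultaneously is not wrong in principle, but you never explain how to get the $A_2$--$B_2$ path inside $X_2$ that your theta/wheel construction for \ref{i6} needs --- for an \emph{almost} 2-join no such path is guaranteed a priori, and that is exactly what \ref{i1} provides.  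So the circularity you identify is real for your ordering but dissolves once you prove the weak connectivity condition \ref{i1} first.
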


\begin{proof}
By Lemma~\ref{diamond}, $G$ contains no diamond, and by
Lemma~\ref{star}, it has no star cutset. This is going to be used
repeatedly in the proofs below.  Let $(X_1,X_2,A_1,A_2,B_1,B_2)$ be a
split of an almost 2-join of $G$.

 To prove~\ref{i1}, suppose for a contradiction that some
  connected component $C$ of $G[X_1]$ does not intersect $B_1$ (the
  other cases are symmetric).  If there is a node $c\in C\sm A_1$
  then for any node $u\in A_2$, we have that $\{u\}\cup A_1$ is a
  star cutset that separates $c$ from $B_1$.  So, $C\subseteq A_1$.
  If $|A_1| \geq 2$ then pick any node $c\in C$ and a node $c'\neq
  c$ in $A_1$.  Then $\{c'\} \cup A_2$ is a star cutset that separates
  $c$ from $B_1$.  So, $C= A_1 = \{c\}$.  Hence, there exists some
  component of $G[X_1]$ that does not intersect $A_1$, so by the same
  argument as above we deduce $|B_1| = 1$ and the unique node of
  $B_1$ has no neighbor in $X_1$.  Since $|X_1|\geq 3$, there is a
  node $u$ in $C_1$.  For any node $v$ in $X_2$, $\{v\}$ is a star
  cutset of $G$ that separates $u$ from $A_1$, a contradiction.

 To prove~\ref{i3} and~\ref{i4}, consider a node
  $a\in A_1$ complete to $B_1$ (the other cases are symmetric).  If
  $A_1\cup C_1 \neq \{a\}$ then $B_1\cup A_2 \cup \{a\}$ is a star
  cutset that separates $(A_1\cup C_1) \sm \{a\}$ from $B_2$, a
  contradiction.  So, $A_1\cup C_1 = \{a\}$ and $|B_1| \geq 2$ because
  $|X_1|\geq 3$.  Let $b\neq b' \in B_1$.  So, $\{b, a\} \cup B_2$ is a
  star cutset that separates $b'$ from $A_2$, a contradiction.

  We now prove~\ref{i6}. If $|A_i| = 1$ then $A_{3-i}$ contains no
  path of length~2 since $G$ contains no diamond. It follows that $A_{3-i}$ is
  a disjoint union of cliques. We may therefore assume that
  $|A_1|, |A_2| \geq 2$.  If $A_i$ is not a clique, then it contains
  two non-adjacent nodes that form a diamond together with any edge of
  $A_{3-i}$.  It follows that $A_{3-i}$ is a stable set, and by
  symmetry, so is $A_i$.  Since $K_{2, 3}$ is
  a theta, we have $|A_1|= |A_2| = 2$.

  Let $A_1 = \{a_1, a'_1\}$ and $A_2= \{a_2, a'_2\}$.  Suppose that
  $a_1$ and $a'_1$ are in the same connected component of $G[X_1]$.
  Then, a path of $G[X_1]$ from $a_1$ to $a'_1$ together with $a_2$
  and $a'_2$ form a theta, a contradiction.  It follows that $a_1$ and
  $a'_1$ are in different connected components of $G[X_1]$.
  By~\ref{i1}, it follows that $G[X_1]$ has precisely two connected
  components.  By the same argument, $G[X_2]$ also has precisely two
  connected components.  It follows that $|B_1|, |B_2|\geq 2$, and by
  the same proof as in the paragraph above, $B_1$ and $B_2$ are stable
  sets of size 2.  By~\ref{i1}, there is a chordless path $P_1$ in
  $G[X_1]$ from $a_1$ to some node of $B_1$, that we denote by $b_1$.
  There are similar paths $P'_1 = a'_1 \dots b'_1$,
  $P_2 = a_2 \dots b_2$ and $P'_2 = a'_2 \dots b'_2$.  If $P_1$ has
  length at least~2 (meaning that $a_1$ and $b_1$ are non-adjacent),
  then $\{ a_1,a_1',b_1\} \cup V(P_2)\cup V(P_2')$ contains a
  $3PC(a_2,a_2')$. Therefore $P_1$ has length 1, and by symmetry so do
  $P_1',P_2$ and $P_2'$.  But then
  $\{ a_1, a_1', a_2, a_2', b_1, b_1', b_2' \}$ induces a wheel with
  center $a_2'$, a contradiction.  This completes the proof
  of~\ref{i6} and the proof of~\ref{i7} is similar.

  To prove~\ref{i8} suppose by contradiction and up to symmetry
  that $G[X_1]$ is disconnected.  By~\ref{i1}, $G[A_1]$ and $G[B_1]$
  must be disconnected, so by~\ref{i6} and~\ref{i7}, they are disjoint
  union of cliques and $A_2$ and $B_2$ are both made of a single node,
  say $a_2$ and $b_2$ respectively.  By~\ref{i1} there exists a
  chordless path $P$ in $G[X_2]$ from $a_2$ to $b_2$. By~\ref{i3} this
  path is of length at least~2.  Therefore, by considering three paths
  from $a_2$ to $b_2$ (one that goes through a component of $X_1$, one
  that goes through another component of $X_1$, and $P$), we obtain a
  theta, a contradiction.

  Suppose~\ref{i10} does not hold.  So, up to symmetry there exists a
  node $v\in X_1$ such that every path in $G[X_1]$ from $v$ to $B_1$
  has an internal node in $A_1$.  Note in particular that
  $v\notin B_1$.  Also, $A_1 = \{v\}$ is impossible, because if so,
  by~\ref{i8} there exists a path in $G[X_1]$ from $v$ to $B_1$, and
  since $A_1 = \{v\}$, this path has no internal node in $A_1$, a
  contradiction.  It follows that $A_2 \cup A_1 \sm \{v\}$ is a cutset
  that separates $v$ from $B_1$, and since $A_1 \neq \{v\}$, this
  cutset contains at least one node of $A_1$.  If $A_1$ is a clique,
  then $A_2 \cup A_1 \sm \{v\}$ is a star cutset (centered at any
  node of $A_1\sm \{v\}$) that separates $v$ from the rest of the
  graph, a contradiction.  Since $A_1$ is not a clique, by~\ref{i6} it
  is a disjoint union of cliques and $A_2$ is single node $a$.  It
  follows that $\{a\} \cup A_1 \sm \{v\}$ is a star cutset centered at
  $a$, a contradiction.  Hence~\ref{i10} holds, and by an analogous proof,
  so does~\ref{i9}.
\end{proof}

We now define the blocks of decomposition of a graph with respect  to a
2-join.  Let $G$ be a graph and $(X_1, X_2)$ a 2-join of $G$.  The
\emph{blocks of decomposition} of $G$ with respect to $(X_1, X_2)$ are
the two graphs $G_1$ and $G_2$ that we describe now.  We obtain $G_1$
from $G$ by replacing $X_2$ by a \emph{marker path} $P_2= a_2 c_2
b_2$, where $a_2$ is a node complete to $A_1$, $b_2$ is a node complete
to $B_1$, and  $c_2$ has no neighbor in $X_1$.  The
block $G_2$ is obtained similarly by replacing $X_1$ by a marker path
$P_1 = a_1c_1b_1$ of length $2$.

\begin{lemma}
  \label{keepCons}
  Let $(X_1,X_2)$ be a consistent 2-join of
  a graph $G$, and let $G_1$ and $G_2$ be the blocks of decomposition
  of $G$ with respect to $(X_1, X_2)$.  Then, for $i=1, 2$,
  $(X_i, V(P_{3-i}))$ is a consistent almost 2-join of $G_i$.
\end{lemma}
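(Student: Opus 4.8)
The plan is to verify each of the eight conditions in the definition of \emph{consistent} for the almost 2-join $(X_i,V(P_{3-i}))$ of $G_i$, say for $i=1$ (the case $i=2$ being symmetric). The key observation is that $G_1[X_1]=G[X_1]$, and the special sets of the almost 2-join of $G_1$ are $A_1$, $B_1$ on the $X_1$-side and $\{a_2\}$, $\{b_2\}$ on the $V(P_2)$-side, where $P_2=a_2c_2b_2$ is the marker path. So conditions that concern only the $X_1$-side of the join (namely \ref{i1}, and the ``$A_1$-half'' of \ref{i3}, the ``$B_1$-half'' of \ref{i4}, \ref{i8}, \ref{i10}, \ref{i9}) transfer \emph{verbatim} from the hypothesis that $(X_1,X_2)$ is a consistent 2-join of $G$, since these properties are statements about the induced subgraph $G[X_1]=G_1[X_1]$ and the sets $A_1,B_1$ alone. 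First I would dispatch all of these by this single remark.

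Next I would handle the conditions that mention the $V(P_2)$-side. Condition \ref{i1} on the $P_2$-side: $G_1[V(P_2)]$ is the path $a_2c_2b_2$, which is connected and meets both $\{a_2\}$ and $\{b_2\}$ — and \ref{i8} holds there trivially, as does \ref{i10} and \ref{i9} (a single path to $b_2$, resp.\ $a_2$, using $c_2$ which is not in $\{a_2\}$, resp.\ $\{b_2\}$). For \ref{i3}: every node of $\{a_2\}$ needs a non-neighbor in $\{b_2\}$, i.e.\ $a_2b_2\notin E(G_1)$, which holds by construction of the marker path (its only edges are $a_2c_2$ and $c_2b_2$); likewise \ref{i4} on the $P_2$-side. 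For \ref{i6}: we need that either both $A_1$ and $\{a_2\}$ are cliques, or one of them is a single node and the other a disjoint union of cliques. Since $\{a_2\}$ is a single node, it suffices that $A_1$ is a disjoint union of cliques; but by \ref{i6} applied to the consistent 2-join $(X_1,X_2)$ of $G$, $A_1$ is either a clique (hence a disjoint union of cliques) or, when not a clique, it is already a disjoint union of cliques with $A_2$ a single node. Either way $A_1$ is a disjoint union of cliques, so \ref{i6} holds for $G_1$; and \ref{i7} is identical using $B_1$ and $\{b_2\}$.

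The only genuinely nontrivial points are the ``other half'' of \ref{i3} and \ref{i4} on the $X_1$-side: every node of $A_1$ must have a non-neighbor in $B_1$, and every node of $B_1$ a non-neighbor in $A_1$ — these are exactly conditions \ref{i3}, \ref{i4} for the \emph{original} consistent 2-join $(X_1,X_2)$ and so transfer directly; so in fact there is no obstacle there either. The last thing to check is that $(X_1,V(P_2))$ really is an almost 2-join of $G_1$ to begin with: the partition is into $X_1$ and $V(P_2)$ with $|X_1|\ge 3$ (inherited) and $|V(P_2)|=3$, and the complete bipartite adjacencies are exactly $A_1$ to $\{a_2\}$ and $B_1$ to $\{b_2\}$ with no others, which is precisely how the block $G_1$ was constructed ($a_2$ complete to $A_1$, $b_2$ complete to $B_1$, $c_2$ with no neighbor in $X_1$). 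I expect no real difficulty; the main thing to be careful about is bookkeeping — namely matching up which of $A_1,B_1,\{a_2\},\{b_2\}$ plays which role in each of the eight numbered conditions — and the one substantive input is invoking \ref{i6}/\ref{i7} for the original join to conclude $A_1$ and $B_1$ are disjoint unions of cliques, which is what makes \ref{i6}/\ref{i7} go through for $G_1$ since the marker side contributes only a single node.
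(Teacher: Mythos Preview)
Your proof is correct and follows essentially the same approach as the paper: the paper's proof simply asserts that the conditions on the $X_i$-side are inherited from $G$ (since $G_i[X_i]=G[X_i]$) and that those on the marker-path side are trivially true, which is precisely what you verify in detail. Your careful check of \ref{i6}/\ref{i7} via the observation that $A_1$ (resp.\ $B_1$) is always a disjoint union of cliques is the only point requiring any thought, and you handle it correctly.
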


\begin{proof}
  Obviously, $(X_i, V(P_{3-i}))$ is an almost 2-join of $G_i$ (but not
  a 2-join, the side $V(P_{3-i})$ violates the additional condition in
  the definition of 2-joins).  It is consistent, because all the
  conditions to be checked in $X_i$ are inherited from the fact that
  they hold in $G$, and the conditions in $V(P_{3-i})$ are trivially
  true.
\end{proof}

\begin{lemma}
  \label{l:keepKfree}
  Let $G$ be a graph with a consistent 2-join $(X_1, X_2)$ and $G_1$,
  $G_2$ be the blocks of decomposition with respect to this 2-join.
  Then, $G$ has no clique cutset if and only if $G_1$ and $G_2$ have
  no clique cutset.
\end{lemma}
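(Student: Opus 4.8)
The plan is to prove both directions by contradiction, transferring a clique cutset between $G$ and its blocks of decomposition. The key structural facts I will use repeatedly are consistency (Lemma~\ref{l:consistent} and the definition), in particular conditions~\ref{i8}, \ref{i10}, \ref{i9}, and the special-set conditions~\ref{i6}, \ref{i7}, together with the fact that $G$ is diamond-free (which holds since a graph with a clique cutset is not at issue here — actually I should be careful: the statement does not assume $G$ is (theta, wheel)-free, so I will treat the 2-join combinatorially using only consistency). The basic observation is that any clique $K$ of $G_i$ meets the marker path $P_{3-i}= a_{3-i} c_{3-i} b_{3-i}$ in at most an edge, and since $c_{3-i}$ has no neighbor in $X_i$, either $K \subseteq X_i$, or $K \subseteq \{a_{3-i}\} \cup A_i$, or $K \subseteq \{b_{3-i}\}\cup B_i$ (the cases $K\ni a_{3-i}$ and $K\ni b_{3-i}$ simultaneously being impossible as $a_{3-i},b_{3-i}$ are non-adjacent on the marker path of length~2).

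First I would prove the ``only if'' direction: if $G_1$ has a clique cutset $K$, then $G$ has one. Pick a clique cutset $K$ of $G_1$ and a partition of $V(G_1)\sm K$ into nonempty $Y, Z$ with no edges between them. If $K\subseteq X_1$: the marker path $P_2$ is connected and disjoint from $K$, so it lies entirely in $Y$ or entirely in $Z$, say $Y$; then I claim $K$ (viewed in $G$, so $K\subseteq X_1$) is a clique cutset of $G$ separating $(Z\cap X_1)$ from $X_2$ — here I use that every node of $A_1\cup B_1$ in $Z$ would be adjacent across to $X_2$, hence to $a_2$ or $b_2\in Y$, contradiction, so $Z\cap X_1$ contains no node of $A_1\cup B_1$; then consistency (\ref{i10},\ref{i9}) or more simply just connectivity of $G[X_2]\cup(A_1\cap Y)\cup(B_1\cap Y)$ shows $G[(Y\cap X_1)\cup X_2]$ is on one side and $Z\cap X_1\neq\emptyset$ on the other. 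If instead $K\subseteq\{a_2\}\cup A_1$ (the case $\{b_2\}\cup B_1$ is symmetric): then $K\sm\{a_2\}\subseteq A_1$ and I replace $a_2$ by all of $A_2$ — i.e.\ I claim $K' = (K\sm\{a_2\})\cup A_2 \cup$ (if needed nothing more) is a clique cutset of $G$; this is where~\ref{i6} enters, since for $K'$ to be a clique I need $A_2$ to be a clique complete to $K\sm\{a_2\}\subseteq A_1$, which is exactly what~\ref{i6} and the definition of the 2-join guarantee when $K\sm\{a_2\}\neq\emptyset$, and when $K=\{a_2\}$ I just take $K'=A_2$. Then $K'$ separates the same two sides, using that $P_2\sm\{a_2\} = \{c_2,b_2\}$ maps to $X_2\sm A_2$.

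Second, the ``if'' direction (contrapositive: if $G$ has a clique cutset, so does $G_1$ or $G_2$): let $K$ be a clique cutset of $G$. By~\ref{i6},\ref{i7} a clique of $G$ meets $A_1\cup A_2$ in a subset of $A_1$ or of $A_2$, similarly for $B$; and since the only edges between $X_1$ and $X_2$ are the $A$-$A$ and $B$-$B$ edges, $K\subseteq X_1$, or $K\subseteq X_2$, or $K\subseteq A_1\cup A_2$, or $K\subseteq B_1\cup B_2$. If $K\subseteq X_1$, I want to show $K$ is a clique cutset of $G_1$: the components of $G\sm K$ that lie inside $X_1$ survive, and I must check $G\sm K$ is still disconnected after we glue back $X_2$ via the marker — the point is that all of $X_2$ plus the parts of $A_1,B_1$ not in $K$ form a single connected blob (by~\ref{i8} for $X_2$), so if some component $D$ of $G\sm K$ inside $X_1$ avoids $A_1\cup B_1$ it stays a separate component in $G_1\sm K$; such a $D$ exists because if every component of $G\sm K$ inside $X_1$ met $A_1\cup B_1$ then, since they all merge through $X_2$, $G\sm K$ would be connected — unless the ``other side'' of $K$ is wholly inside $X_1$ too, in which case the same $K$ trivially works in $G_1$. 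The case $K\subseteq X_2$ is symmetric (use $G_2$). If $K\subseteq A_1\cup A_2$, hence up to symmetry $K\subseteq A_1$ (or meets both sides — but an edge $a_1a_2$ forces, by~\ref{i6}, that one of $A_1,A_2$ is a single node; I handle these sub-cases separately), then I expect $K\cap A_1$, extended by $\{a_1\}$ inside $G_2$ (or by $a_2$ inside $G_1$), to be a clique cutset of the appropriate block; the marker node $a_i$ is complete to $A_{3-i}$ by construction, so $(K\cap A_j)\cup\{a_j\}$ is a clique, and it disconnects $G_{3-j}$ because $c_{3-j}$ has no other neighbors. The symmetric case $K\subseteq B_1\cup B_2$ is identical.

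The main obstacle I anticipate is the bookkeeping in the $K\subseteq X_i$ sub-case of the ``if'' direction: one must rule out the possibility that contracting $X_{3-i}$ to the three-node marker path accidentally reconnects the graph, i.e.\ one must verify that whenever $G\sm K$ has a component $D\subseteq X_i$ with $D\cap(A_i\cup B_i)=\emptyset$, that component is still isolated from the marker path in $G_{3-i}$ — which is immediate since $D$ has no neighbor in $X_{3-i}$ and hence none among $\{a_{3-i},b_{3-i}\}$ — but also that such a $D$ genuinely exists. For existence I will argue: since $K$ is a cutset of $G$ and $K\subseteq X_i$, and $G[X_{3-i}]$ together with $(A_i\cup B_i)\sm K$ and all their common neighbors is connected (\ref{i8} plus the complete-bipartite structure), the ``far side'' of the cutset is confined to $X_i$ and avoids $A_i\cup B_i$; if on top of that it avoids $A_i\cup B_i$ we are done, and if it does meet them we get a contradiction with $K$ being a cutset. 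I will phrase this cleanly by first passing to a component $D$ of $G\sm K$ with $D\subseteq X_i$ and $D\cap(A_i\cup B_i)=\emptyset$ (such $D$ exists because otherwise $G\sm K$ is connected through $X_{3-i}$), and then observing that $D$ is also a component of $G_i\sm K$ and of $G_{3-i}\sm K'$ as appropriate — I think in fact the cleanest route is to always produce the clique cutset in the block $G_i$ with $K\subseteq X_i$, and to handle $K\subseteq A_j\cup B_j$ by noting $A_j\cup B_j\subseteq X_j$ for either $j$, reducing to the $X_i$ case.
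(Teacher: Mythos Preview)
Your overall strategy---transferring a clique cutset between $G$ and its blocks---is exactly the paper's, but you are missing the one observation that makes the proof short: a clique $K\subseteq A_1\cup A_2$ (and symmetrically $K\subseteq B_1\cup B_2$, or $K\subseteq\{a_2\}\cup A_1$ in $G_1$, etc.) can \emph{never} be a cutset. Indeed, by condition~\ref{i10} every node of $X_i\setminus K$ has a path in $G[X_i]$ to $B_i$ with no internal node in $A_i$, hence avoiding $K$; together with~\ref{i8} on the other side and the complete bipartite $B_1$--$B_2$ edges, this makes $G\setminus K$ connected. The paper therefore immediately discards this case as contradictory and is left with the easy case $K\subseteq X_i$, where~\ref{i8} for $X_{3-i}$ shows all of $X_{3-i}$ sits in one component of $G\setminus K$, so $K$ is a clique cutset of $G_i$. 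For the converse, the paper simply invokes Lemma~\ref{keepCons} (the almost 2-join $(X_i,V(P_{3-i}))$ of $G_i$ is consistent) and repeats the identical argument.

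Because you do not see that these ``$A$-side'' cases are vacuous, you attempt to transfer the cutset there, and your argument has a genuine gap: you claim that when $K\setminus\{a_2\}\neq\emptyset$, condition~\ref{i6} forces $A_2$ to be a clique, so that $K'=(K\setminus\{a_2\})\cup A_2$ is a clique. This is false: \ref{i6} allows $|A_1|=1$ with $A_2$ a disjoint union of several cliques, in which case $K\setminus\{a_2\}=A_1$ is nonempty yet $A_2$ is not a clique. (Your case analysis for cliques in $G_i$ also omits cliques containing $c_{3-i}$, though these too are never cutsets.) None of this actually matters for correctness, since the case is impossible---but your plan as written would stall here. Replacing your entire treatment of the $A_1\cup A_2$ and $\{a_2\}\cup A_1$ cases by the one-line contradiction via~\ref{i10} fixes everything and recovers the paper's proof.
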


\begin{proof}
  We prove an equivalent statement: $G$ has a clique cutset if and
  only if $G_1$ or $G_2$ has a clique cutset.

  Suppose first that $G$ has a clique cutset $K$.  By the definition
  of a 2-join and up to symmetry, either $K \subseteq X_1$, or
  $K\subseteq A_1 \cup A_2$.  In the first case, by condition~\ref{i8}
  in the definition of consistent 2-joins, $G[X_2]$ is connected, so
  $X_2$ is included in some component of $G\sm K$.  It follows that
  $K$ is a clique cutset of $G_1$.  In the second case, by
  condition~\ref{i10} of consistent 2-joins, every node of $G\sm K$
  can be linked to a node of $B_1 \cup B_2$ by a path that avoids
  $K$.  So, $K$ is not a cutset, a contradiction.

  Conversely, suppose up to symmetry that $G_1$ has a clique cutset
  $K$.  By Lemma~\ref{keepCons}, $(X_1, V(P_{2}))$ is a consistent
  almost 2-join of $G_1$.  So, by exactly the same proof as in the
  paragraph above, we can prove that $G$ has a clique cutset.
\end{proof}

We now need to study how a hole may overlap a consistent almost 2-join
of a graph.  So, let $G$ be graph, $(X_1, X_2, A_1, A_2, B_1, B_2)$ a
split of a consistent almost 2-join of $G$, and $H$ a hole of $G$.
Because of the adjacencies in an almost 2-join, $H$ must be one of the
following five types:

\begin{description}
  \item[Type 0]: for some $i\in \{1, 2\}$, $V(H) \subseteq X_i$.
  \item[Type 1A]: for some $i\in \{1, 2\}$, $H= ap_1 \dots p_ka$ where
    $k\geq 3$, $p_2, \dots, p_{k-1} \in X_i\sm A_i$, $a\in A_{3-i}$,
    and $\{p_1, p_k\} \subseteq A_i$.
  \item[Type 1B]: for some $i\in \{1, 2\}$, $H= bp_1 \dots p_kb$ where
    $k\geq 3$, $p_2, \dots, p_{k-1} \in X_i\sm B_i$, $b\in B_{3-i}$,
    and $\{p_1, p_k\} \subseteq B_i$.
  \item[Type 2]: for some $i\in \{1, 2\}$,
    $H= ap_1 \dots p_kbq_1 \dots q_la$ where $k\geq 2$, $l\geq 2$,
    $p_2, \dots, p_{k-1},  q_2, \dots, q_{l-1} \in X_i\sm (A_i \cup B_i)$, $a\in A_{3-i}$,
    $b\in B_{3-i}$, $\{p_1, q_l\} \subseteq A_i$, and
    $\{p_k, q_1\} \subseteq B_i$.
    \item[Type 3]: $H= p_1 \dots p_k q_1 \dots q_l p_1$ where $k, l
      \geq 2$, $p_2, \dots, p_{k-1} \in X_1\sm (A_1 \cup B_1)$, $q_2,
      \dots, q_{l-1}  \in X_2\sm (A_2 \cup B_2)$, $p_1\in A_1$, $p_k\in
      B_1$, $q_1 \in B_2$, $q_l\in A_2$.
\end{description}

Note that if $(X_1, X_2)$ is a 2-join (rather that just an almost
2-join) and $H$ a hole  of type 0, 1A, 1B or 2, then up to the
replacement of $a$ and/or $b$ by a marker node, $H$ is a hole of
$G_1$ or a hole of $G_2$, and we simply denote this hole by $H$ (with a
slight abuse, due to the replacement of a node by a marker node).
If $H$ is of type~3, then by replacing $q_1\dots q_l$ by the marker
path $P_2$, we obtain a hole $H_1$ of $G_1$, and by replacing
$p_1\dots p_l$ by the marker path $P_1$, we obtain a hole $H_2$ of
$G_2$. We will use this notation in what follows.

Let $G$ be a graph that contains a consistent 2-join $(X_1,X_2)$, and let
$G_1$ and $G_2$ be the corresponding blocks of decomposition.
Consider $G_1$ (analogous statements hold for $G_2$).
$(X_1,V(P_2))$ is not a 2-join of $G_1$
 (but is still an almost 2-join, and a
consistent one by Lemma~\ref{keepCons}). Suppose $G_1$ contains a hole
$H_1$.
 Then, as above, from $H_1$ we may build a hole
$H$ of $G$.  If $H_1$ is of type 0 or 1A or 1B, this is straightforward.
If $H_1$ is of type~2, then we need to be careful when replacing $a$
and $b$ by nodes from $X_2$: the new nodes need to be
non-adjacent, but the existence of such nodes is guaranteed by the
condition~\ref{i3} in the definition of consistent 2-joins.  If $H_1$
is of type~3, then it contains the marker path $P_2$, but a hole $H$
in $G$ can be obtained by replacing this marker path by a shortest
path linking the special sets of $X_2$ (whose existence follows from
the condition~\ref{i1}).

In the proofs of the next lemmas, we will use repeatedly the notation
and constructions from the two paragraphs above.

\begin{lemma}
  \label{l:keepPrism}
  Let $G$ be a graph with a consistent
  2-join $(X_1, X_2)$. Let $G_1$ and $G_2$ be the blocks of decomposition
  of $G$ with respect to $(X_1, X_2)$.  Then, $G$ is
  prism-free if and only if $G_1$ and $G_2$ are both prism-free.
\end{lemma}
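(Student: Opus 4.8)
The plan is to prove both directions of the equivalence, using the machinery about how holes and triangles interact with a consistent 2-join that was set up in the preceding paragraphs.

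For the easy direction, suppose $G_1$ or $G_2$, say $G_1$, contains a prism $\Pi$. I would argue that $\Pi$ ``pulls back'' to a prism of $G$. The only obstruction is the presence of the marker path $P_2 = a_2 c_2 b_2$: if $\Pi$ avoids $P_2$ entirely it is already a prism of $G$. If $\Pi$ uses $P_2$, then since $P_2$ has the three nodes $a_2,c_2,b_2$ with $a_2$ complete to $A_1$, $b_2$ complete to $B_1$, $c_2$ anticomplete to $X_1$, and $a_2b_2\notin E(G_1)$, the subgraph $\Pi$ must enter $X_2$'s side only through $a_2$ and $b_2$ (the only nodes of $P_2$ with neighbours in $X_1$), and these are non-adjacent, so they lie on a single path of the prism, with $c_2$ the interior of that path or part of it. Hence one path of $\Pi$ restricted to $V(P_2)$ is a subpath of $a_2c_2b_2$; replacing it by a shortest path of $G[X_2]$ from $A_1$-neighbourhood to $B_1$-neighbourhood — more precisely from some $a\in A_2$ to some $b\in B_2$ inside $G[X_2]$, whose existence comes from condition~\ref{i1} of consistency, and which can be taken chordless and with no interior neighbour in $A_1$ or $B_1$ using conditions~\ref{i10} and~\ref{i9} — yields a prism of $G$. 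I need to double-check that the two triangles of the prism survive: a triangle of $\Pi$ cannot contain $c_2$ (it has degree $2$ in $P_2$ and its neighbours $a_2,b_2$ are non-adjacent), and if a triangle contains $a_2$ or $b_2$ its other two nodes are in $A_1$ or $B_1$ respectively, which are then replaced by the endpoints of the new path that are complete to them — so I must choose the replacement path's endpoints to be actual members of $A_2,B_2$ that complete $A_1,B_1$ in $G$, which they automatically do by the definition of the 2-join.

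For the harder direction, suppose $G$ contains a prism $\Pi = 3PC(a_1a_2a_3,b_1b_2b_3)$ with triangle sides $\{a_1,a_2,a_3\}$, $\{b_1,b_2,b_3\}$ and connecting paths $P^1,P^2,P^3$. I want to show one of $G_1,G_2$ contains a prism. Since $G$ is (theta, wheel)-free and has no clique cutset, by Lemma~\ref{l:consistent} the 2-join is consistent, and by Lemma~\ref{diamond} $G$ is diamond-free. The key structural fact is that each triangle of $\Pi$, being a clique, lies in $X_1$, in $X_2$, or straddles the join — but a clique straddling a 2-join must lie inside $A_1\cup A_2$ or inside $B_1\cup B_2$. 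I would do a case analysis on where the two triangles of $\Pi$ sit. If both triangles are on the same side, say $\subseteq X_1$, then each connecting path $P^i$ either stays in $X_1$ or crosses to $X_2$ and back; but crossing requires passing through $A_1\cup A_2$ then $B_1\cup B_2$ (the only join edges), and one checks that if some $P^i$ leaves $X_1$ then it must use at least one $A$-edge and one $B$-edge, forcing the endpoints $a_i,b_i$ to be in $A_1$ and $B_1$ respectively; then the whole prism can be pushed into $G_1$ by replacing the $X_2$-portion of each crossing path with the marker path $P_2$ (they all share the crossing through essentially the same cutset, so a more careful argument is that at most one path can cross, since two disjoint $A$-to-$B$ crossings inside $X_2$ would, together with a third path, produce a theta or wheel via the hole types listed above). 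The remaining main case is when the two triangles lie on opposite sides, one in $X_1$ and one in $X_2$ — then all three paths $P^1,P^2,P^3$ cross the 2-join, each contributing a chordless $A$-to-$B$ path inside $X_2$; contracting each of these to the marker path of $G_1$ would collapse the prism, so instead I keep $X_1$'s side intact and replace in $G$ the entire $X_2$ side by $P_2$: since each of the three paths meets $A_2$ in exactly one node and $B_2$ in exactly one node (the $A_2$-nodes forming the triangle, likewise the $B_2$-nodes), after replacement all three paths run through $a_2c_2b_2$, which is impossible for a prism — so I must show this case cannot occur, i.e. that a prism cannot have its two triangles separated by a 2-join. That is the main obstacle: ruling out (or correctly handling) the ``crossing prism'' configuration, which I expect to do by observing that the three internally disjoint $A_2$-to-$B_2$ subpaths in $X_2$, together with the triangle in $X_2$ and an appropriate connection, would create a $3PC(\triangle,\cdot)$ or a wheel in $G[X_2 \cup \{\text{one node of } X_1\}]$, contradicting only-prism-ness, or alternatively by a direct counting argument that $G[X_2]$ plus two nodes of $A_1\cup B_1$ already contains a prism lying in $G_2$. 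I would push the case analysis through Types 0--3 of holes and the corresponding ``pull-back of a hole'' construction described right before this lemma, since a prism is built from three pairwise-intersecting holes.

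Finally I would assemble the two directions into the stated ``if and only if'', noting symmetry between $G_1$ and $G_2$ throughout, and between the roles of $A$ and $B$. I expect the write-up to lean heavily on the observation that a prism is determined by its two triangles plus the three connecting paths, reducing everything to: triangles behave like cliques w.r.t. the 2-join (so each lives on one side or straddles via $A$ or $B$), and each connecting path behaves like (a piece of) a hole, so the five-types trichotomy and the marker-path replacement apply verbatim.
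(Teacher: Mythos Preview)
There is a genuine gap. The lemma is stated for an \emph{arbitrary} graph $G$ with a consistent 2-join; it does not assume that $G$ is (theta, wheel)-free, diamond-free, only-prism, or has no clique cutset. Yet in the harder direction you write ``Since $G$ is (theta, wheel)-free and has no clique cutset, by Lemma~\ref{l:consistent} \dots'' and later ``contradicting only-prism-ness''. None of this is available: consistency is a \emph{hypothesis}, not something you may re-derive from Lemma~\ref{l:consistent}, and the lemma is applied in the only-pyramid recognition algorithm to graphs that are not yet known to be (theta, wheel)-free. So the whole mechanism you propose for ruling out the ``crossing prism'' (two triangles on opposite sides of the 2-join) collapses.

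The paper handles exactly this obstruction without any extra hypotheses, and with almost no case analysis, by organising the argument around a \emph{hole} of the prism of maximal type (Types 0, 1A, 1B, 2, 3 as set up just before the lemma) and the remaining path $P$ attached to two disjoint edges of that hole. The only delicate case is Type~3: if $P$ were to cross between $X_1$ and $X_2$ it would use an $A_1$--$A_2$ edge (say), disjoint from the $A_1$--$A_2$ edge already used by the Type~3 hole; hence $|A_1|,|A_2|\ge 2$, so by consistency condition~\ref{i6} both $A_1$ and $A_2$ are cliques, and these four nodes induce a $K_4$ inside the prism --- impossible, since a prism is $K_4$-free. This $K_4$ trick is the missing idea in your plan; it uses only the consistency of the 2-join and an intrinsic property of prisms. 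Your alternative suggestion (``a direct counting argument that $G[X_2]$ plus two nodes of $A_1\cup B_1$ already contains a prism lying in $G_2$'') might be salvageable, but you have not carried it out, and the triangle-location case split you set up is considerably messier than the hole-type framework the paper exploits.
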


\begin{proof}
  We prove the equivalent statement  ``$G$ contains a prism if and only if $G_1$ or
  $G_2$ contains a prism''.

  Suppose that $G$ contains a prism $T$.  Note that a prism contains
  three holes, and we denote by $H$ a hole of $T$ whose type is
  maximal (we order types as follows: $0 < 1A, 1B < 2 < 3$).  Hence,
  $T$ is made of $H$, together with a path $P = u\dots v$ of length at
  least~1, node disjoint from $H$, and $u$ and $v$ are adjacent to two
  disjoint edges of $H$.  Note that $P$ is contained in the two holes
  of $T$ that are different from $H$.  If $H$ is of type~0, then up to
  symmetry $V(H)\subseteq X_1$, and by the maximality of $H$, all
  holes of $T$ are of type~0 and $V(T) \subseteq X_1$.  So, $T$ is
  also a prism of $G_1$. If $H$ is of type 1A, then up to symmetry
  $H= ap_1 \dots p_ka$, $p_2, \dots, p_{k-1} \in X_1\sm A_1$ and
  $a\in A_2$.  Hence, $A_1$ contains non-adjacent nodes, so by the
  condition~\ref{i6} in the definition of consistent 2-joins,
  $A_2=\{a\}$.  It follows by the maximality of $H$ that $P$ does not
  contain a subpath from $A_2$ to $B_2$, so
  $V(P) \subseteq X_1\cup B_2$, and if $P$ overlaps $B_2$, then the
  condition~\ref{i7} implies that $|B_2| = 1$.  Hence, after possibly
  replacing the node of $B_2$ by a marker node, $H$ and $P$ form a
  prism of $G_1$.  The case when $H$ is of type 1B is symmetric to the
  previous one.  The proof is similar when $H$ is of type~2.  If $H$
  is of type 3, then we claim that $P$ is in $X_1$ or in
  $X_2$. Otherwise, $P$ must contains adjacent nodes in $X_1$ and
  $X_2$, up to symmetry in $A_1$ and $A_2$ respectively.  Hence,
  $|A_1|, |A_2| \geq 2$, so by condition~\ref{i6}, $A_1$ and $A_2$
  are both cliques, a contradiction since a prism
  contains no $K_4$.  So, up to symmetry $P$ is in $X_1$, and $H_1$
  and $P$ form a prism of $G_1$.

  The proof of the converse statement is analogous: we start
  with a prism of $G_1$ or $G_2$, and according to the type of a maximal
  hole of the prism, we build a prism of $G$.
\end{proof}

\begin{lemma}
  \label{l:keepTW}
  Let $G$ be a graph with a consistent 2-join $(X_1, X_2)$. Let $G_1$
  and $G_2$ be the blocks of decomposition of $G$ with respect to
  $(X_1, X_2)$.  Then, $G$ is (theta, wheel)-free if and only if $G_1$
  and $G_2$ are both (theta, wheel)-free.
\end{lemma}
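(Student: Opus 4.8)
The plan is to prove the equivalent statement: $G$ contains a theta or a wheel if and only if $G_1$ or $G_2$ does. This is the exact analogue of Lemma~\ref{l:keepPrism}, and I would follow the same template --- a case analysis on the type of a maximal-type hole --- taking extra care to accommodate the differences between prisms on one side and thetas and wheels on the other.

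For the ``if'' direction, suppose up to symmetry that $G_1$ contains a theta or a wheel $F$. If $V(F)\subseteq X_1$, then $F$ is a theta or a wheel of $G$. Otherwise $F$ meets the marker path $P_2=a_2c_2b_2$; since $c_2$ has degree~2 in $G_1$ it can appear in $F$ only as an interior node of a hole or of a path, so $F$ meets $P_2$ either in all of $P_2$ or in exactly one of $a_2,b_2$. As in the construction described just before Lemma~\ref{l:keepPrism}, I would then replace the part of $P_2$ used by $F$ by a suitable $A_2$--$B_2$ path $Q$ of $G[X_2]$ (one of length at least~2 exists by conditions~\ref{i8} and~\ref{i3}): since every node of $A_2$ (resp.\ $B_2$) has in $X_1$ the same neighbourhood as $a_2$ (resp.\ $b_2$), and the interior of $Q$ is anticomplete to $X_1$ just like $c_2$, this substitution turns $F$ into a theta or a wheel of $G$.

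For the ``only if'' direction, suppose $G$ contains a theta or a wheel $F$, and let $\mathcal H$ be the set of holes contained in $F$: for a theta these are the three unions of pairs of its paths, and for a wheel $(H,c)$ they are the rim $H$ together with the holes formed by $c$ and an arc of $H$ between two neighbours of $c$ that are consecutive on $H$ and at distance at least~2 on $H$; $F$ contains no other hole, and in all cases $F$ is $K_4$-free. Let $H\in\mathcal H$ be of maximum type in the ordering $0<1A,1B<2<3$ and argue by the type of $H$. If $H$ has type~0, maximality forces $V(F)\subseteq X_i$ for some $i$. If $H$ has type~1A, say $H=ap_1\dots p_ka$ with $a\in A_2$ and $p_1,p_k\in A_1$, then $p_1,p_k$ are non-adjacent (as $k\ge3$), so $A_1$ is not a clique and condition~\ref{i6} gives $A_2=\{a\}$; I would then examine how the ``rest'' of $F$ (the third path of the theta, or the centre of the wheel) meets $X_2$: if it uses an $A_1$--$A_2$ edge then, since $a$ is the only node of $A_2$ and lies on $H$, some hole of $F$ touches $X_2$ in two places and has type at least~2, contradicting the choice of $H$; so it meets $X_2$ only through $B_1$--$B_2$ edges, and condition~\ref{i7} then allows one to contract the corresponding $X_2$-part to the marker $b_2$. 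If $b_2$ inherits at least three neighbours on the resulting hole of $G_1$ we get a wheel of $G_1$; if it inherits exactly two non-adjacent ones we get a theta of $G_1$; and if those two are adjacent, one exhibits a hole of $F$ through the centre (respectively, through the extra path) of type~3, again contradicting maximality. The case of type~1B is symmetric, and if $H$ has type~2 or~3 then, exactly as in Lemma~\ref{l:keepPrism}, the rest of $F$ must lie entirely in $X_1$ or entirely in $X_2$ (a crossing would, via conditions~\ref{i6} and~\ref{i7}, make both $A_1,A_2$ or both $B_1,B_2$ cliques and yield a $K_4$ in $F$ together with the crossing edge), so the usual marker substitution --- plus, for type~3, replacement of the other marker path by a shortest path linking the special sets of that side --- produces a theta or a wheel of $G_1$ or $G_2$. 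The converse is obtained by running the same analysis from a maximal-type hole of a theta or wheel of a block.

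The step I expect to be the main obstacle is the type~1A/1B case. Because a theta meets each of its holes at its two branch vertices (so its ``extra path'' is not disjoint from the chosen hole), and because wheels contain diamonds, one cannot reduce to diamond-free graphs as in the only-prism lemmas; the delicate work is to track exactly how many neighbours a wheel centre (or how many attaching edges of the theta path) survive when an $X_2$-excursion is contracted to a marker node, and in the borderline case of exactly two surviving neighbours to carry out the dichotomy ``non-adjacent, so a theta appears in the block'' versus ``adjacent, so a type-3 hole of $F$ exists, contradicting maximality'', across all the symmetric sub-cases. The type~0, type~2 and type~3 cases should go through essentially as in Lemma~\ref{l:keepPrism}, using only the $K_4$-freeness of thetas and wheels.
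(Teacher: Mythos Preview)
Your plan is sound and would work, but you are making the wheel case harder than it needs to be. The paper's proof separates wheels from thetas and, crucially, for a wheel $(H,c)$ it classifies the \emph{rim} $H$ itself, not a maximal-type hole among all holes of the wheel. This one change dissolves the obstacle you flagged. In type~1A (say $H=ap_1\dots p_ka$ with $a\in A_2$, so $A_2=\{a\}$ by~\ref{i6}), the centre $v$ cannot lie in $A_2$ (it would equal $a\in V(H)$) and cannot lie in $C_2$ (no neighbours in $X_1$); if $v\in B_2$ then all its rim-neighbours lie in $B_1$ and the marker $b_2$ inherits them, so $(H,b_2)$ is a wheel of $G_1$; otherwise $v\in X_1$ and $(H,v)$ is already a wheel of $G_1$. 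Types~1B and~2 are the same with~\ref{i7} added, and type~3 is your argument: $v$ lies in some $X_i$ and $(H_i,v)$ is a wheel of $G_i$. No counting of ``surviving neighbours'' and no dichotomy are needed.

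For thetas your outline coincides with the paper's: pick a hole $H$ of the theta of maximal type, write the theta as $H$ together with a path $P=u\dots v$ of length at least~2 between non-adjacent $u,v\in V(H)$, and run the case analysis exactly as in Lemma~\ref{l:keepPrism}. Your type~3 argument (crossing forces $|A_1|,|A_2|\ge2$, hence both cliques by~\ref{i6}, hence a $K_4$ in the triangle-free theta) is in fact cleaner than the paper's phrasing. So the only adjustment is: treat wheels by their rim rather than by a maximal-type hole, and the ``main obstacle'' you identified disappears.
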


\begin{proof}
  We first prove that $G$ contains a wheel if and only if $G_1$ or
  $G_2$ contains a wheel.

  Suppose that $G$ contains a wheel with rim $H$ and center $v$.  If
  $H$ is of type~0, then up to symmetry $V(H)\subseteq X_1$.  Then,
  $H$ is also the rim of a wheel of $G_1$ (the center is $v$, or
  possibly a marker node).  If $H$ is of type 1A, then up to
  symmetry $H= ap_1 \dots p_ka$, $p_2, \dots, p_{k-1} \in X_1\sm A_1$
  and $a\in A_2$.  Hence, $A_1$ contains non-adjacent nodes, so by
  the condition~\ref{i6} in the definition of consistent 2-joins,
  $A_2=\{a\}$.  It follows that $v\in X_1$, and that $(H, v)$ is a
  wheel of $G_1$.  The case when $H$ is of type 1B is symmetric to the
  previous one.  If $H$ is of type 2, then up to symmetry
  $H= ap_1 \dots p_kbq_1 \dots q_la$
  $p_2, \dots, p_{k-1}, q_2, \dots, q_{l-1} \in X_1\sm (A_1 \cup
  B_1)$,
  $a\in A_{2}$, $b\in B_{2}$, $\{p_1, q_l\} \subseteq A_1$, and
  $\{p_k, q_1\} \subseteq B_1$.  By the conditions~\ref{i6}
  and~\ref{i7} in the definition of consistent 2-joins, $A_2=\{a\}$
  and $B_2=\{b\}$.  It follows that $v\in X_1$, and that $(H, v)$ is a
  wheel of $G_1$.  If $H$ is of type 3, then up to symmetry we suppose
  that $v\in X_1$.  We observe that $(H_1, v)$ is a wheel of $G_1$.

  The proof of the converse statement is analogous: we start
  with a wheel of $G_1$ or $G_2$, and according to the type of the rim, we
  build a wheel of $G$.

 We now prove that $G$ contains a theta if and only if $G_1$ or
  $G_2$ contains a theta.

  Suppose that $G$ contains a theta $T$.  Note that a theta contains
  three holes, and we denote by $H$ a hole of $T$ whose type is
  maximal w.r.t.\ the order defined in the proof of
  Lemma~\ref{l:keepPrism}. Hence, $T$ is made of $H$, together with a
  path $P = u\dots v$ of length at least~2, where $u$ and $v$ are non
  adjacent nodes of $H$.  If $H$ is of type~0, then up to symmetry
  $V(H)\subseteq X_1$, and by the maximality of $H$, all holes of $T$
  are of type~0 and $V(T) \subseteq X_1$.  So, $T$ is also a theta of
  $G_1$. If $H$ is of type 1A, then up to symmetry
  $H= ap_1 \dots p_ka$, $p_2, \dots, p_{k-1} \in X_1\sm A_1$ and
  $a\in A_2$.  Hence, $A_1$ contains non-adjacent nodes, so by the
  condition~\ref{i6} in the definition of consistent 2-joins,
  $A_2=\{a\}$.  It follows by the maximality of $H$ that $P$ does not
  contain a subpath from $A_2$ to $B_2$, so
  $V(P) \subseteq X_1\cup B_2$, and if $P$ overlap $B_2$, then the
  condition~\ref{i7} implies that $|B_2| = 1$.  Hence, after possibly
  replacing the node of $B_2$ by a marker node, $H$ and $P$ form a
  theta of $G_1$.  The case when $H$ is of type 1B is symmetric to the
  previous one.  The proof is similar when $H$ is of type~2.  If $H$
  is of type 3, then we claim that the interior of $P$ is in $X_1$ or
  in $X_2$. Otherwise, the interior of $P$ must contains adjacent
  nodes in $X_1$ and $X_2$, up to symmetry in $A_1$ and $A_2$
  respectively.  This violates the condition~\ref{i6}, a contradiction
  that proves our claim.  So, up to symmetry the interior of $P$ is in
  $X_1$, and $H_1$ and the interior of $P$ form a theta of $G_1$.

  The proof of the converse statement is analogous: we start with a
  theta of $G_1$ or $G_2$, and according to the type of a maximal hole
  of the theta, we build a theta of $G$.
\end{proof}

\begin{lemma}
  \label{l:atl4}
  If a graph $G$ has a consistent 2-join $(X_1, X_2)$, then
  $|X_1|, |X_2| \geq 4$.
\end{lemma}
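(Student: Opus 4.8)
The plan is a short proof by contradiction. We already know $|X_i|\ge 3$ from the definition of an almost 2-join, so it suffices to rule out $|X_i|=3$; by the symmetry between $X_1$ and $X_2$, assume $|X_1|=3$. Since $A_1$ and $B_1$ are disjoint nonempty subsets of the three-element set $X_1$, there are, up to exchanging the roles of $A$ and $B$, exactly two configurations to examine: either $A_1=\{a_1\}$, $B_1=\{b_1\}$ and $C_1=\{c_1\}$, or $A_1=\{a_1,a_1'\}$, $B_1=\{b_1\}$ and $C_1=\emptyset$ (with mirror case $A_1=\{a_1\}$, $B_1=\{b_1,b_1'\}$, $C_1=\emptyset$).

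In the first configuration, consistency condition~\ref{i3} forces $a_1$ and $b_1$ to be non-adjacent, so since $G[X_1]$ is connected by condition~\ref{i8}, the node $c_1$ must be adjacent to both $a_1$ and $b_1$; hence $G[X_1]$ is the chordless path $a_1c_1b_1$. As $|A_1|=|B_1|=1$, this contradicts the clause in the definition of a 2-join that forbids $G[X_i]$ from being a chordless path when $|A_i|=|B_i|=1$.

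In the second configuration, condition~\ref{i4} provides a node of $A_1$ non-adjacent to $b_1$, say $a_1'$. Since $X_1=\{a_1,a_1',b_1\}$ and $a_1'b_1\notin E(G)$, connectedness (condition~\ref{i8}) forces $a_1'a_1\in E(G)$ (and then also $a_1b_1\in E(G)$), so the only path from $a_1'$ to $b_1$ inside $G[X_1]$ is $a_1'a_1b_1$, whose interior node $a_1$ lies in $A_1$. This contradicts condition~\ref{i10} applied with $v=a_1'$. The mirror case is handled identically using condition~\ref{i9} in place of~\ref{i10}. Hence $|X_1|\ge 4$, and by symmetry $|X_2|\ge 4$.

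I do not expect a real obstacle: the whole content is that a three-node side is too small to satisfy connectivity together with the non-adjacency conditions~\ref{i3}--\ref{i4} and either the no-chordless-path clause of the 2-join definition or the reachability conditions~\ref{i9}--\ref{i10}. The only point needing a little care is enumerating the possible shapes of $(A_1,B_1,C_1)$ correctly and matching each to the consistency condition it violates; in particular it is conditions~\ref{i9} and~\ref{i10}, rather than~\ref{i1} or~\ref{i8} alone, that kill the case of a pendant special node on a tiny side.
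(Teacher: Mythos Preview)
Your proof is correct and follows essentially the same route as the paper's: a short contradiction from the consistency conditions when one side has only three nodes. The paper compresses your two cases into one by using the $A\leftrightarrow B$ symmetry to assume $|A_1|=1$ at the outset, then invokes~\ref{i4} and~\ref{i1} (rather than~\ref{i10}) to force $G[X_1]$ to be a chordless path of length~2, contradicting the definition of a 2-join.
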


\begin{proof}
  Suppose for a contradiction that $|X_1| = 3$.  Up to symmetry we
  assume $|A_1| = 1$, and let $a_1$ be the unique node in $A_1$.  By
  the condition~\ref{i4} in the definition of consistent 2-joins, every
  node of $B_1$ has a non-neighbor in $A_1$.  Since $A_1 = \{a_1\}$,
  this means that $a_1$ has no neighbor in $B_1$.  By~\ref{i1},
  $G[X_1]$ is a path of length 2 whose interior is in $C_1$.  This
  contradicts the definition of a 2-join (note that this does not
  contradict the definition of an almost 2-join).
\end{proof}

\section{Algorithms}
\label{sec:algo}

We are now ready to describe our recognition algorithms based on
decomposition by
clique cutsets and 2-joins.  When a graph $G$ has a clique cutset $K$,
its node set can be partitioned into nonempty sets $A$, $K$, and $B$ in
such a way that there are no edges between $A$ and $B$.  We call such
a triple a \emph{split} for the clique cutset.  When $(A, K, B)$ is a
split for a clique cutset of a graph $G$, the blocks of decomposition
of $G$ with respect to $(A, K, B)$ are the graphs $G_A=
G[A\cup K]$ and $G_B= G[K \cup B]$.

\begin{lemma}
  \label{l:kpc}
  Let $G$ be a graph and $(A, K, B)$ be a split for a clique cutset of
  $G$.  Then, $G$ contains a prism (resp.\ a pyramid, a theta, a
  wheel) if and only if one of the blocks of decomposition $G_A$ or
  $G_B$ contains a prism (resp.\ a pyramid, a theta, a wheel).
\end{lemma}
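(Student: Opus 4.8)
The statement is about Truemper configurations being preserved under clique-cutset decomposition. The plan is to prove both directions of the equivalence for a single type (say prism) and note that the arguments for pyramid, theta and wheel are identical, since the only properties of these configurations used are that each is $2$-connected and that it contains no clique cutset of its own (equivalently, removing any clique from a Truemper configuration leaves it connected). First I would record this basic fact: if $F$ is a prism, pyramid, theta or wheel, then for every clique $Q$ of $F$, $F \setminus Q$ is connected. This is immediate from the explicit descriptions in Section~\ref{sec:intro}: a clique in any of these graphs is contained in (the vertex set of) one of the triangles, or in the case of a wheel an edge of the rim plus possibly the center; in each case deleting it cannot disconnect the remaining vertices, because the two or three internally-disjoint paths (minus the deleted vertices) still join everything up.

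Next I would do the easy direction. If $G_A$ (or $G_B$) contains a prism $T$, then since $G_A = G[A \cup K]$ is an induced subgraph of $G$, $T$ is also an induced subgraph of $G$, so $G$ contains a prism. (Here I must be a little careful that the statement says ``blocks of decomposition'' and these are genuinely induced subgraphs of $G$ — which they are by definition $G_A = G[A\cup K]$, $G_B = G[K\cup B]$ — so nothing extra is needed.)

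For the forward direction, suppose $G$ contains a prism $T$, and let $(A,K,B)$ be the given split, so $K$ is a clique and there are no edges between $A$ and $B$. Consider $Q := V(T) \cap K$; this is a clique of $T$. By the basic fact above, $T \setminus Q$ is connected, and $T\setminus Q$ has no vertex in $K$, so $T\setminus Q$ lies entirely in $G\setminus K = A \cup B$; since it is connected and there are no $A$--$B$ edges, either $V(T)\setminus Q \subseteq A$ or $V(T)\setminus Q \subseteq B$. In the first case $V(T) \subseteq A\cup K$, so $T$ is an induced subgraph of $G_A$ and $G_A$ contains a prism; in the second case $V(T)\subseteq K\cup B$ and $G_B$ contains a prism. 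Then I would remark that replacing ``prism'' throughout by ``pyramid'', ``theta'' or ``wheel'' changes nothing, because the only input used is that $T$ is $2$-connected with no clique cutset, which holds for all four configurations by the basic fact.

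The main (and only) obstacle is the bookkeeping in the basic fact: one has to check, for each of the four configuration types, that every clique is small and positioned so that its removal does not disconnect the graph. For prisms, pyramids and thetas the maximum clique is a triangle sitting at an ``end'', and the union of the (at least two) remaining subpaths keeps the graph connected; for wheels a clique is an edge of the rim together with possibly the center, and since the center has at least three neighbors on the rim, deleting such a clique still leaves a connected graph. This is entirely routine but is the one place where the explicit structure of Truemper configurations is genuinely needed.
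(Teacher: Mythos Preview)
Your proof is correct and is exactly the approach of the paper, just written out in full: the paper's entire proof is the single sentence ``Follows directly from the fact that a Truemper configuration has no clique cutset,'' and you have supplied the routine verification of that fact and the standard argument that it implies the lemma.
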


\begin{proof}
  Follows directly from the fact that a Truemper configuration has no
  clique cutset.
\end{proof}

A \emph{clique cutset decomposition tree} for a graph $G$ is a rooted
tree $T$ defined as
follows.
\begin{itemize}
\item The root of $T$ is $G$.
 \item Every non-leaf node of $T$ is a graph $G'$ that contains a clique
    cutset $K$ with split $(A, K, B)$ and  the children of $G'$ in $T$ are the blocks
    of decomposition $G'_A$ and $G'_B$ of $G$ with respect to $(A, K, B)$.
  \item Every leaf of $T$ is a graph with no clique cutset.
  \item $T$ has at most $n$ leaves.
  \end{itemize}

\begin{theorem}[Tarjan~\cite{tarjan:clique}]
  \label{th:tarjan}
  A clique cutset decomposition tree of an input graph $G$ can be
  computed in time $O(nm)$.
\end{theorem}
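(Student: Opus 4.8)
The plan is to reproduce the algorithm of Tarjan~\cite{tarjan:clique}, whose point is to reduce the search for clique cutsets to one computation of a \emph{minimal triangulation}. After an $O(n+m)$ preprocessing that splits $G$ into its connected components — which accounts for the splits with $K=\emptyset$ — I may assume $G$ is connected. I would then compute a chordal supergraph $G^+=(V(G), E(G)\cup F)$ of $G$ such that no proper subset of the fill edges $F$ makes the graph chordal; by the \textsc{Lex-M} algorithm of Rose, Tarjan and Lueker (or the more recent \textsc{MCS-M}), such a $G^+$ together with a perfect elimination ordering $v_1, \dots, v_n$ of $G^+$ can be produced in time $O(nm)$. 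Minimality of the fill-in is what makes $G^+$ useful: by a classical fact (going back to Dirac, made precise by Parra and Scheffler), a set $S\subseteq V(G)$ is a clique minimal separator of $G$ if and only if $S$ is a minimal separator of $G^+$ that induces a clique in $G$; and $G$ has a clique cutset if and only if it has a clique minimal separator, since any clique cutset separating two vertices $a$ and $b$ contains a minimal $ab$-separator, which is again a clique.

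Since $G^+$ is chordal, each of its minimal separators is one of the sets $S_i:=\{v_j : j>i,\ v_iv_j\in E(G^+)\}$, each $S_i$ is a clique of $G^+$, and there are at most $n-1$ distinct such sets. Tarjan's algorithm scans $v_1,\dots,v_n$ in elimination order, maintaining a ``current piece'' (initially $G$): at step $i$, if $S_i$ induces a clique of $G$ — equivalently, contains no fill edge — and actually separates, inside the current piece, the already-processed vertices still present from the remaining ones, it peels off the block $G[A\cup S_i]$, where $A$ is the side of the already-processed vertices, records the split $(A,S_i,B)$ (with $B$ the rest of the piece), and continues the scan on $G[S_i\cup B]$. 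The heart of~\cite{tarjan:clique} is the proof that the blocks peeled off this way, together with the final current piece, are exactly the \emph{atoms} of $G$, i.e. the maximal induced subgraphs with no clique cutset, and that these atoms pairwise overlap in cliques. Granting this, the clique cutset decomposition tree is read off directly: its root is $G$; each peel corresponds to an internal node with children $G[A\cup S_i]$ (an atom, hence a leaf) and $G[S_i\cup B]$; the leaves are the atoms.

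It remains to check the two quantitative clauses. For the running time, the preprocessing and the triangulation are $O(n+m)$ and $O(nm)$ respectively; the scan has $n$ iterations, and in each, testing whether $S_i$ is a clique of $G$ costs $O(m)$ (scan the relevant adjacency lists, or the fill-edge list) and testing the separation condition costs one graph search, i.e. $O(n+m)$, so the scan is $O(nm)$; the total is therefore $O(nm)$. For the number of leaves, the construction is binary: the component preprocessing uses $c-1$ internal nodes (where $c$ is the number of components of $G$), and the scan within a component of size $n_j$ peels off at most $n_j-1$ blocks, so at most $n-c$ peels in total; hence there are at most $n-1$ internal nodes and at most $n$ leaves, matching the last clause of the definition of a clique cutset decomposition tree.

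The main obstacle is not the bookkeeping above but the two classical ingredients on which it rests: (a) computing a minimal triangulation together with a perfect elimination ordering within the $O(nm)$ budget, which is the genuinely nontrivial analysis of \textsc{Lex-M} / \textsc{MCS-M}; and (b) the structural correspondence between the clique minimal separators of $G$ and the minimal separators of a minimal triangulation, which is what makes the reduction sound — in particular, what guarantees that no clique cutset is missed and that each peeled block really is an atom. Both are standard and ultimately due to Tarjan (and, for (a), to Rose, Tarjan and Lueker), so in the paper it suffices to invoke~\cite{tarjan:clique}; the skeleton above is how I would spell the proof out.
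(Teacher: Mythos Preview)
The paper does not prove this theorem: it is stated with a citation to Tarjan~\cite{tarjan:clique} and used as a black box, so there is no ``paper's own proof'' to compare against. Your sketch is a faithful outline of Tarjan's algorithm and its analysis; one small anachronism is that the clean characterisation you invoke (clique minimal separators of $G$ $\leftrightarrow$ minimal separators of a minimal triangulation that are cliques of $G$) is usually attributed to Parra and Scheffler and postdates~\cite{tarjan:clique}, whereas Tarjan's original correctness argument proceeds more directly from properties of the \textsc{Lex-M} fill-in. For the purposes of this paper, simply citing~\cite{tarjan:clique} is all that is required.
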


A \emph{consistent 2-join decomposition tree} for a graph $G$ is a
rooted tree $T$ defined as follows.
\begin{itemize}
\item The root of $T$ is $G$.
\item Every non-leaf node of $T$ is a graph $G'$ that contains a
  consistent 2-join with split $(X_1, X_2, A_1, A_2, B_1, B_2)$ and the children
  of $G'$ in $T$ are the blocks of decomposition  $G'_1$ and $G'_2$
  with respect to $(X_1, X_2, A_1, A_2, B_1, B_2)$.
  \item Every leaf of $T$ is a graph with no 2-join, or a graph with a
    non-consistent 2-join (and is identified as such).
  \item $T$ has at most $O(n)$ nodes.
  \end{itemize}

\begin{theorem}
  \label{th:build2joinT}
  A consistent 2-join decomposition tree of an input graph $G$ can be
  computed in time $O(n^3m)$.
\end{theorem}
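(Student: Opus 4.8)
The plan is to build the consistent 2-join decomposition tree greedily: repeatedly find a consistent 2-join in a current leaf of the tree that has one, decompose it into its two blocks, and recurse. I would first invoke the known fact (from the literature on 2-joins, e.g. Charbit--de Montgolfier--Raffinot or the original work of Cornu\'ejols--Cunningham, which the paper is implicitly relying on here) that a 2-join in an $n$-node, $m$-edge graph can be detected, and a split produced, in time $O(n^2m)$ (or some fixed polynomial bound of that order). Given a 2-join with split $(X_1,X_2,A_1,A_2,B_1,B_2)$, checking whether it is consistent amounts to verifying the eight numbered conditions in the definition of consistent, each of which is a simple reachability/clique test computable in $O(n+m)$ time; so detecting a consistent 2-join (or reporting that every 2-join present is non-consistent) costs $O(n^2m)$ per graph.

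First I would bound the number of nodes of the tree. The blocks of decomposition $G_1,G_2$ satisfy $V(G_i)=X_i\cup\{a_{3-i},c_{3-i},b_{3-i}\}$, so $|V(G_1)|+|V(G_2)| = |V(G)|+6$. By Lemma~\ref{l:atl4}, $|X_1|,|X_2|\ge 4$, hence each block is strictly smaller than $G$ (indeed $|V(G_i)|\le |V(G)|-1$), and more importantly one checks that $|V(G_1)|+|V(G_2)|\le |V(G)| + 6$ while each $|V(G_i)| \ge 7$ forces... the cleaner argument: assign to each graph $G'$ in the tree the potential $|V(G')|-6$; since $|X_i|\ge 4$ we get $|V(G_i)|-6 \ge 1$, i.e.\ the potential stays positive, and $(|V(G_1)|-6)+(|V(G_2)|-6) = (|V(G')|-6) - 6 < |V(G')|-6$, so the sum of potentials over the leaves strictly decreases with depth and is bounded by $|V(G)|$. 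A standard charging argument then gives $O(n)$ leaves, hence $O(n)$ internal nodes and $O(n)$ nodes total, matching the bound required in the definition of a consistent 2-join decomposition tree.

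Next I would bound the total size of all graphs appearing in the tree. Since the marker paths add only $3$ nodes per block and each decomposition step increases the total vertex count by $6$ over a tree with $O(n)$ nodes, the sum of $|V(G')|$ over all $G'$ in the tree is $O(n^2)$, and likewise the sum of $|E(G')|$ is $O(nm)$ (edges only come from induced subgraphs plus $O(1)$ marker edges per block). Running the $O(n'^2 m')$ consistent-2-join detection routine at each node $G'$ and summing, using $n'\le n$ and $\sum m' = O(nm)$ and $O(n)$ nodes, yields total time $O(n \cdot n^2 m) = O(n^3 m)$; being careful, the dominant term is (number of nodes) $\times$ (cost per node) $= O(n)\cdot O(n^2 m) = O(n^3 m)$, as claimed.

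The main obstacle is pinning down the exact complexity of the black-box 2-join detection subroutine and making sure the consistency check is genuinely cheap (linear), and — more subtly — making sure the recursion actually terminates and that "find a 2-join, check if it's consistent; if not, is there another one?" is handled correctly: a graph may have many 2-joins, only some consistent, so the subroutine must enumerate or search over 2-joins for a consistent one rather than testing a single arbitrary one. In fact, by Lemma~\ref{l:consistent}, if $G'$ has no clique cutset then \emph{every} almost 2-join is consistent, so on graphs that are already clique-cutset-free (which in the intended application they will be, since the clique cutset decomposition is done first) there is nothing to worry about — one detects any 2-join and it is automatically consistent. I would state the algorithm in that setting or, to be safe in general, remark that the non-consistent 2-joins can only appear when a clique cutset is present, and handle that case by the criterion in the definition (a leaf with a non-consistent 2-join is simply identified as such and not decomposed), so the tree is still well-defined and of size $O(n)$, and the running time bound is unaffected.
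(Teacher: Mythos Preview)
Your approach is essentially the paper's: call the $O(n^2m)$ 2-join detection routine, check consistency, declare the current graph a leaf if the returned 2-join is not consistent (exactly as the definition of the tree permits, so there is no need to search for a consistent one among many---your final worry is misplaced), and bound the number of tree nodes by $O(n)$ via Lemma~\ref{l:atl4}. One arithmetic slip: since $|V(G_1)|+|V(G_2)|=(n_1+3)+(n_2+3)=n+6$, the potential satisfies $(|V(G_1)|-6)+(|V(G_2)|-6)=n-6=|V(G')|-6$, i.e.\ it is \emph{conserved}, not decreased by $6$; this still suffices because each leaf has potential at least $1$, giving at most $n-6$ leaves (the paper expresses the same count via the induction $f(G)\le 2n-13$).
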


\begin{proof}
  Here is an algorithm that outputs a tree $T$.  We run an algorithm
  from~\cite{ChHaTrVu:2-join} that outputs in time~$O(n^2m)$ a split
  of a 2-join of $G$, or certifies that no 2-joins exists (warning:
  what we call here a 2-join is called in~\cite{ChHaTrVu:2-join} a
  \emph{non-path 2-join}).  If $G$ has no 2-join, then $G$ is declared
  to be a leaf of $T$.  If a split $(X_1, X_2, A_1, A_2, B_1, B_2)$ is
  outputted, we check whether $(X_1, X_2)$ is consistent (this can be
  easily done in time $O(nm)$, all the conditions in the definition
  of consistent 2-joins are easy to check).  If the 2-join is not
  consistent, then $G$ is declared to be a leaf of $T$.  Otherwise, we
  compute the blocks of decomposition $G_1$ and $G_2$ of $G$ with
  respect to $(X_1, X_2, A_1, A_2, B_1, B_2)$, and run the algorithm
  recursively for $G_1$ and $G_2$.

  The algorithm is clearly correct.  Here is the complexity
  analysis. We may assume that the input graph has at least 7 nodes
  (otherwise, we look directly for the tree in constant time).  Note
  that by Lemma~\ref{l:atl4}, at every recursive call, the size of the
  graph decreases, so the algorithm terminates.  Also, every graph
  involved in the algorithm has at least seven nodes.  We denote by
  $f(G)$ the number of calls to the algorithm for a graph $G$ on $n$
  nodes.  We show by induction that $f(G) \leq 2n-13$.  If $G$ is a
  leaf of $T$, this is true because $f(G) = 1$, and since $n\geq 7$,
  we have $2n-13\geq 1$.  If $G$ is not a leaf, then it has a 2-join
  $(X_1, X_2)$ and we set $n_1 = |X_1|$ and $n_2 = |X_2|$.  Note that
  $n=n_1+n_2$ and that the blocks of decomposition $G_1$ and $G_2$
  have respectively $n_1+3$ and $n_2+3$ nodes. Since there is one call
  to the algorithm plus at most $f(G_1) + f(G_2)$ recursive calls, by
  the induction hypothesis we have:

  $$
  f(G) \leq f(G_1) + f(G_2) + 1
  \leq 2(n_1 +3) -13 + 2(n_2 +3) -13 + 1
  = 2n -13.
  $$

  So, there are at most $2n-13$ calls to an algorithm of complexity
  $O(n^2m)$.  The overall complexity is therefore $O(n^3m)$.  Since
  the number of nodes of the tree is bounded by the number of
  recursive calls, $T$ has at most $O(n)$ nodes.
\end{proof}

We need to recognize in polynomial time the basic classes of our
theorems.

\begin{lemma}
  \label{l:testOp}
  There is an $O(n^2m)$-time algorithm that decides whether an input graph is
  the line graph of a triangle-free chordless graph (resp.\ a
  pyramid-basic graph, a long pyramid, a clique, a hole).
\end{lemma}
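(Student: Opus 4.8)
The plan is to handle each of the five basic classes separately, and for each one either reduce to a known polynomial-time subroutine or give a direct structural check that runs in the claimed bound. Recognising a \emph{clique} and recognising a \emph{hole} are immediate: check whether $G$ is complete, or whether $G$ is connected, $2$-regular and has at least four nodes; both take $O(n+m)$ time. Recognising the \emph{line graph of a triangle-free chordless graph} will use Lemma~\ref{l:lgtfc}, which says this class coincides with the $(\textrm{wheel},\textrm{diamond},\textrm{claw})$-free graphs. A diamond and a claw can be found (or excluded) in $O(nm)$ or $O(n^2m)$ time by brute force over small vertex sets; the only nontrivial point is detecting a wheel, but in a $(\textrm{claw},\textrm{diamond})$-free graph the structure is very restricted, since by Theorem~\ref{th:HararyH} such a graph is the line graph of a triangle-free graph $R$, which can be reconstructed in linear time (Roussopoulos/Lehot), and then one checks directly that $R$ is chordless, i.e.\ that every edge $e=uv$ of $R$ has no neighbour adjacent to both $u$ and $v$ (cycles of $R$ are chordless iff $R$ is $K_4$-minus-an-edge-free in the appropriate sense), which is the $O(n^2m)$ step. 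A \emph{long pyramid} is recognised by trying all choices of the apex $a$ and the three edges at $a$, then checking that removing $a$ leaves three disjoint chordless paths of length $\ge 2$ ending in a triangle and with no cross-edges; this is polynomial, well within $O(n^4m)$.

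The main work is recognising \emph{pyramid-basic graphs}. Here the plan is to reverse the three-step construction given in Section~\ref{sec:mr}. First identify the candidate special nodes $x$ and $y$: in a pyramid-basic graph, $x$ and $y$ are the only nodes that can be centers of claws (this was shown in the proof of Lemma~\ref{l:opinC}), and moreover $y$ has the property that $N(y)\setminus\{x\}$ consists only of nodes of degree~$2$, while $x$ has a similar property relative to its label-$x$ neighbours; so we enumerate the (at most $O(n^2)$) adjacent pairs $\{x,y\}$, and for each we test whether $G\setminus\{x,y\}$ is the line graph of a tree $T$ (using the line-graph reconstruction algorithm and checking acyclicity of $T$). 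Given such a $T$, the degree-$1$ nodes of $L(T)$ are forced to be exactly the neighbours of $x$ and $y$ outside $\{x,y\}$, which assigns labels; one then verifies the combinatorial conditions on $T$: that $T$ is \emph{safe} (every degree-$1$ node has a neighbour of degree $\le 2$, every pendant edge has at most one sibling), that each pendant edge receives a label in $\{x,y\}$ consistently with its adjacency in $G$, and that siblings get distinct labels. All of these are local conditions on $T$ checkable in $O(n+m)$ time; the dominant cost is the $O(n^2)$ choices of $\{x,y\}$ each costing $O(nm)$ for reconstruction and verification, giving $O(n^3m)$ — if we are not careful — so to hit $O(n^2m)$ one must argue that the pair $\{x,y\}$ is essentially determined (e.g.\ $x$ and $y$ are the two nodes not lying in any triangle and having all non-mutual neighbours of degree~$2$, or there are only $O(1)$ candidates), leaving just $O(1)$ reconstruction calls.

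I expect the pyramid-basic case to be the main obstacle, specifically the bookkeeping needed to pin down $\{x,y\}$ cheaply enough and to confirm that the reconstructed tree, with its induced labelling, genuinely reproduces $G$ under the construction (one must check \emph{no extra edges} between $x$, $y$ and $L(T)$, and that $x,y$ are nonadjacent to all non-leaf nodes of $L(T)$). Degenerate cases — $L(T)$ being a single path (so the pyramid-basic graph is a long hole with two extra vertices), or $T$ having few edges — need to be treated as boundary cases but do not affect the complexity. Once each of the five tests is in place with its stated running time, the lemma follows by running whichever test is asked for.
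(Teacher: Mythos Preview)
Your overall approach matches the paper's: use the linear-time line-graph root algorithm (Lehot/Roussopoulos), check the root for triangle-freeness and chordlessness, and for pyramid-basic graphs iterate over candidate special edges $xy$ and test whether $G\setminus\{x,y\}$ is the line graph of a safe tree. However, there are three concrete problems in the details.

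First, your test for chordlessness is wrong. You write that $R$ is chordless iff ``every edge $e=uv$ of $R$ has no neighbour adjacent to both $u$ and $v$'', and gloss this as being $K_4$-minus-an-edge-free. But that condition is just triangle-freeness again; a triangle-free graph can certainly have cycles with chords (take a $6$-cycle plus one long chord). The paper's test is the right one: for each edge $uv$ of $R$, delete it and check whether $u$ and $v$ lie in the same $2$-connected block of $R\setminus uv$ (this happens iff $uv$ is a chord of some cycle). That costs $O(n+m)$ per edge, hence $O(nm+m^2)\le O(n^2m)$ in total.

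Second, your complexity analysis for the pyramid-basic case is too pessimistic, and the optimization you propose (pinning $\{x,y\}$ down to $O(1)$ candidates) is both unnecessary and left unjustified. There are $m$ adjacent pairs, and for each one the root reconstruction plus all the safety and labelling checks cost $O(n+m)$, not $O(nm)$. So the straightforward loop over all edges $xy$ already runs in $O(m(n+m))\le O(n^2m)$; this is exactly what the paper does, with no cleverness needed to identify $x$ and $y$ in advance.

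Third, for long pyramids you only claim $O(n^4m)$, which does not meet the lemma's $O(n^2m)$ bound. But this recognition is genuinely trivial (as the paper says): check that exactly four vertices have degree~$3$, that three of them induce the unique triangle, that all remaining vertices have degree~$2$, and that deleting the apex and the triangle edges leaves three paths of length at least~$2$. This is $O(n+m)$.
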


\begin{proof}
  Note first that deciding whether a graph $G$ is a line graph, and if
  so computing a graph $R$ such that $G=L(R)$ can be performed in time
  $O(n+m)$ as shown in~\cite{lehot:root,roussopoulos:linegraphe}.
  Deciding whether a graph is chordless can be done easily in time
  $O(nm + m^2)$: for every edge $uv$, compute in time $O(n+m)$  the blocks
  of $G\sm uv$ by the classical algorithm
  from~\cite{tarjan:dfs}.  Then, check whether $u$ and $v$ are in the
  same 2-connected block (this holds if and only if $uv$ is a chord of
  some cycle of $G$).  Deciding whether a graph is triangle-free
  can be performed trivially in time $O(nm)$.  By combining all this,
  we test in time $O(n^2m)$ whether a graph is a line graph of a
  triangle-free chordless graph.

  To test whether a graph is a pyramid-basic graph, for every edge  $xy$,
  we test whether $G\sm \{x, y\}$ is the line-graph of a tree, and if
  so, we compute the tree, and check whether it is safe. Checking whether
  $x$ and $y$ satisfy the requirement of the definition of pyramid-basic
  graphs is then easy.

  Checking whether a graph is a long pyramid, a hole or a clique is
  trivial.
\end{proof}

\begin{theorem}
  \label{th:recoOprism}
  There exists a $O(n^3m)$ time algorithm that decides whether an
  input graph $G$ is only-prism.
\end{theorem}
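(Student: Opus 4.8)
The plan is to combine the decomposition theorem for only-prism graphs (Theorem~\ref{th:decWTPFRee}) with the two decomposition-tree constructions (Theorems~\ref{th:tarjan} and~\ref{th:build2joinT}) and the basic-class recognition routines (Lemma~\ref{l:testOp}). First I would observe that, by Theorem~\ref{th:decWTPFRee}, an only-prism graph is either the line graph of a triangle-free chordless graph or has a clique cutset; conversely, by Lemma~\ref{l:lgtfc} and the remark preceding Theorem~\ref{th:decWTPFRee}, every line graph of a triangle-free chordless graph is only-prism, and by Lemma~\ref{l:kpc} a graph has a prism, pyramid, theta or wheel if and only if one of the blocks of a clique-cutset decomposition does. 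Hence $G$ is only-prism if and only if every leaf of a clique cutset decomposition tree of $G$ is only-prism, and each such leaf, having no clique cutset, is only-prism if and only if it is the line graph of a triangle-free chordless graph.

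So the algorithm is: build a clique cutset decomposition tree $T$ of $G$ using Theorem~\ref{th:tarjan} in time $O(nm)$; then for each leaf $G'$ of $T$ (there are at most $n$ leaves, each on at most $n$ nodes), test using Lemma~\ref{l:testOp} in time $O(n^2m)$ whether $G'$ is the line graph of a triangle-free chordless graph; accept if and only if all leaves pass the test. Correctness is exactly the equivalence in the previous paragraph: if $G$ is only-prism then so is every block in the tree, hence (being clique-cutset-free) every leaf is a line graph of a triangle-free chordless graph; conversely if every leaf is such a line graph then every leaf is only-prism, and pulling back up the tree via Lemma~\ref{l:kpc} shows $G$ contains no pyramid, theta or wheel, i.e.\ $G$ is only-prism.

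For the running time: the tree has at most $n$ leaves and $O(n)$ nodes total, each on at most $n$ nodes; building it costs $O(nm)$, and running the $O(n^2m)$ test of Lemma~\ref{l:testOp} at each of the $O(n)$ leaves costs $O(n^3m)$, which dominates. I do not expect a genuine obstacle here since all the heavy lifting is in the already-proved decomposition theorem and the cited subroutines; the only thing to be careful about is to phrase the correctness argument as a clean biconditional — namely that being only-prism is preserved both ways across clique-cutset decomposition (Lemma~\ref{l:kpc}) and that clique-cutset-free only-prism graphs are precisely the line graphs of triangle-free chordless graphs (Theorem~\ref{th:decWTPFRee} together with Lemma~\ref{l:lgtfc}) — and to note explicitly that $G'$ ranges over leaves, which carry no clique cutset so that the dichotomy of Theorem~\ref{th:decWTPFRee} collapses to the basic case.
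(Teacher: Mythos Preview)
Your proposal is correct and follows essentially the same argument as the paper: build Tarjan's clique-cutset decomposition tree, then invoke Theorem~\ref{th:decWTPFRee} and Lemma~\ref{l:lgtfc} on each (clique-cutset-free) leaf via the test of Lemma~\ref{l:testOp}, with Lemma~\ref{l:kpc} justifying the pullback to $G$. The only superfluous ingredient is your mention of Theorem~\ref{th:build2joinT} in the opening plan, which you (correctly) never use---2-joins play no role in the only-prism case.
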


\begin{proof}
  We run the algorithm of Theorem~\ref{th:tarjan}.  This gives a list
  of $O(n)$ graphs (the leaves of the decomposition tree) that have no
  clique cutsets, and by Lemma~\ref{l:kpc}, $G$ is only-prism if and
  only if so are all graphs of the list.  By the algorithm from
  Lemma~\ref{l:testOp}, we test whether all graphs from the list are
  line graphs of triangle-free chordless graphs.  If so, $G$ is
  only-prism by Lemma~\ref{l:kpc}, and the algorithm outputs ``$G$ is
  only-prism''.  If one graph from the list fails to be the line graph
  of a triangle-free chordless graph, then since it has no clique
  cutset, it is not only-prism by Theorem~\ref{th:decWTPFRee}.  So,
  the algorithms outputs ``$G$ is not only-prism''.  In the
  worst case, we run $O(n)$ times an algorithm of complexity
  $O(n^2m)$.
\end{proof}

\begin{theorem}
    \label{th:recoOpy}
  There exists an  $O(n^4m)$-time algorithm that decides whether an
  input graph $G$ is only-pyramid.
\end{theorem}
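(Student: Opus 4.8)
The plan is to follow the pattern of the proof of Theorem~\ref{th:recoOprism}, but to use both decompositions supplied by Theorem~\ref{ptwfree}: clique cutsets on top, and consistent 2-joins below them. First I would run the algorithm of Theorem~\ref{th:tarjan} to build a clique cutset decomposition tree of $G$ in time $O(nm)$. By Lemma~\ref{l:kpc}, $G$ is only-pyramid if and only if every leaf of this tree is; each leaf is connected (a disconnected graph has the empty clique cutset), has no clique cutset, has at most $n$ nodes, and there are $O(n)$ of them. Note also that, by Lemma~\ref{diamond}, any clique-cutset-free block that is wheel-free is diamond-free, hence has no clique of size $\geq 4$; so any block with a clique of size $\geq 4$ can be declared not only-pyramid on the spot.

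Second, for each leaf $G'$ I would run the algorithm of Theorem~\ref{th:build2joinT} to build a consistent 2-join decomposition tree of $G'$ in time $O(n^3m)$. Every 2-join used inside such a tree is consistent, so by Lemmas~\ref{l:keepPrism} and~\ref{l:keepTW} the graph at each internal node is (prism, theta, wheel)-free --- that is, only-pyramid --- if and only if both of its children are; reading the tree from the leaves up, $G'$ is only-pyramid if and only if every leaf of its 2-join tree is. Moreover, by Lemma~\ref{l:keepKfree} each such leaf still has no clique cutset --- so Tarjan need not be re-run --- and by construction each such leaf either has no 2-join or is flagged as carrying a non-consistent one. Running this over all $O(n)$ clique-cutset leaves costs $O(n^4m)$; using the observation above to discard blocks with large cliques, the blocks involved keep $O(m)$ edges (the marker path added at a 2-join step introduces at most $|A_1|+|B_1|+2$ new edges), and the process produces $O(n^2)$ final leaves, each connected, with $O(n)$ nodes and $O(m)$ edges.

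Third, I would classify each final leaf $G'$. If $G'$ is flagged with a non-consistent 2-join, then since $G'$ has no clique cutset, the contrapositive of Lemma~\ref{l:consistent} shows $G'$ is not (theta, wheel)-free, hence not only-pyramid, and the algorithm halts, outputting ``$G$ is not only-pyramid''. Otherwise $G'$ has no clique cutset and no 2-join, so by Theorem~\ref{ptwfree} it is only-pyramid if and only if it is a clique, a hole, a long pyramid, or a pyramid-basic graph; the ``if'' direction is Lemma~\ref{l:opinC} for pyramid-basic graphs and is immediate for the other three, and the test runs in time $O(n^2m)$ by Lemma~\ref{l:testOp}. The algorithm outputs ``$G$ is only-pyramid'' exactly when every final leaf passes this test; correctness is the combination of Lemma~\ref{l:kpc} along the clique cutset tree and Lemmas~\ref{l:keepPrism} and~\ref{l:keepTW} along each consistent 2-join tree, together with Theorem~\ref{ptwfree}. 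Classifying $O(n^2)$ leaves at $O(n^2m)$ each costs $O(n^4m)$, matching the cost of building the 2-join trees, so the total running time is $O(n^4m)$.

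The part I would check most carefully is the interplay between the two decomposition types: that a consistent 2-join step never creates a new clique cutset (Lemma~\ref{l:keepKfree}), so the clique cutset decomposition can be done once at the top; that meeting a non-consistent 2-join at a clique-cutset-free block is a valid negative certificate (contrapositive of Lemma~\ref{l:consistent}); and the size accounting for the blocks that makes the $O(n^4m)$ bound go through. The remainder is a straightforward assembly of the ``keep'' lemmas already proved, Theorem~\ref{ptwfree}, and Lemma~\ref{l:testOp}.
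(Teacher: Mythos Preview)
Your proposal is correct and follows essentially the same architecture as the paper's proof: Tarjan's clique-cutset decomposition on top, then the consistent 2-join decomposition tree on each clique-cutset-free block, then Lemma~\ref{l:testOp} on the resulting leaves, with correctness threaded through Lemmas~\ref{l:kpc}, \ref{l:keepKfree}, \ref{l:keepPrism}, \ref{l:keepTW}, \ref{l:consistent} and Theorem~\ref{ptwfree}.

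One small slip worth fixing: your side remark that a diamond-free graph ``has no clique of size~$\geq 4$'' is false (a $K_4$ is diamond-free), so the shortcut of rejecting blocks with large cliques is not justified this way. Fortunately you do not actually need it: at each 2-join step the marker path contributes $|A_1|+|B_1|+2$ edges while at least $|A_1|\cdot|A_2|+|B_1|\cdot|B_2|\geq |A_1|+|B_1|$ crossing edges are removed, so the edge count of a block exceeds that of its parent by at most~$2$, and after $O(n)$ steps the blocks still have $O(m)$ edges. With that correction your complexity accounting stands, and the paper's own analysis is in fact looser than yours on this point.
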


\begin{proof}
  We first run the algorithm of Theorem~\ref{th:tarjan}.  This gives a
  list of $O(n)$ graphs (the leaves of the decomposition tree) that
  have no clique cutsets, and by Lemma~\ref{l:kpc}, $G$ is
  only-pyramid if and only if so are all graphs of the list.
  Therefore, it is enough to provide an algorithm for graphs with no
  clique cutsets.

  So, suppose $G$ has no clique cutset.  By
  Theorem~\ref{th:build2joinT}, we build a consistent 2-join
  decomposition tree $T$ of $G$.  By Lemma~\ref{l:keepKfree}, all
  nodes of $T$ are graphs that have no clique cutset.  If one leaf $T$
  has a non-consistent 2-join, then it cannot be an only-pyramid graph
  by Lemma~\ref{l:consistent}.  The algorithm therefore outputs ``$G$
  is not only-pyramid'', the correct answer by
  Lemmas~\ref{l:keepPrism} and~\ref{l:keepTW}.  Now, we may assume
  that all leaves of $T$ have no 2-join.  By Lemma~\ref{l:testOp}, we
  check whether some leaf of $T$ is a long pyramid, a clique, a hole
  or a pyramid-basic graph.  If one leaf fails to be such a graph, then,
  since it has no clique cutset and no 2-join, it cannot be
  only-pyramid by Theorem~\ref{ptwfree}, so again the algorithm
  outputs ``$G$ is not only-pyramid''.  Now, every leaf
  of $T$ can be assumed to be a long pyramid, a clique, a hole or a
  pyramid-basic graph, and it is therefore only-pyramid by
  Lemma~\ref{l:opinC}.  By Lemmas~\ref{l:keepPrism}
  and~\ref{l:keepTW}, $G$ itself is only-pyramid.
\\
\\
  {\bf Complexity analysis.}  The algorithm when there is no
  clique cutset runs in time $O(n^3m)$ because in the worst case, the
  search for a 2-join and the recognition of basic graphs has to be
  done $O(n)$ times.  This algorithm is performed $n$ times in the
  worst case. So, the overall complexity is $O(n^4m)$.
\end{proof}

We now explain how our decomposition theorems can be turned into
structure theorems.

Let $G_1$ be a graph that contains a clique $K$ and $G_2$ a graph
that contains the same clique $K$, and is node disjoint from $G_1$
apart from the nodes of $K$.  The graph $G_1 \cup G_2$ is the graph
obtained from $G_1$ and $G_2$ by \emph{gluing along a clique}.

Let $G_1$ be a graph that contains a path  $a_2 c_2 b_2$ such that
$c_2$ has degree 2, and such that $(V(G_1)\sm \{a_2, c_2, b_2\},
\{a_2, c_2, b_2\})$ is a consistent  almost 2-join of $G_1$.   Let
$G_2, a_1, c_1, b_1$ be defined similarly.  Let $G$ be the graph built
on $(V(G_1)\sm \{a_2, c_2, b_2\})\cup (V(G_2)\sm \{a_1, c_1, b_1\})$
by keeping all edges inherited from $G_1$ and $G_2$, and by adding all
edges between $N_{G_1}(a_2)$ and $N_{G_2}(a_1)$, and all
edges between $N_{G_1}(b_2)$ and $N_{G_2}(b_1)$.  Graph $G$ is said to be
\emph{obtained from $G_1$ and $G_2$ by consistent 2-join
  composition}.   Observe that $(V(G_1)\sm \{a_2, c_2, b_2\},
V(G_2)\sm \{a_1, c_1, b_1\})$ is a 2-join of $G$ and that $G_1$ and
$G_1$ are the blocks of decomposition of $G$ with respect to this
2-join.

With a proof similar to the proof of Theorem~\ref{th:recoOprism}, it
is straightforward to check the following structure theorem. Every
only-prism graph can be constructed as follows:
\begin{itemize}
  \item Start with line graphs of triangle-free chordless graphs.
  \item Glue along a clique previously constructed graphs.
\end{itemize}

Similarly, it can be checked that every only-pyramid graph can be
constructed as follows:
\begin{itemize}
\item Start with long pyramids, holes, cliques and pyramid-basic graphs.
\item Repeatedly use consistent 2-join compositions from previously
  constructed graphs.
\item Glue along a clique previously constructed graphs.
\end{itemize}

\end{document}